\newcommand{\pD}[2]{\frac{\partial #1}{\partial #2}}
\newcommand{\rD}[2]{\frac{d #1}{d #2}}
\newcommand{\vn}[1]{\lVert#1\rVert}
\newcommand{\nIP}[2]{\left< #1 , #2 \right>}
\newcommand{\sIP}[2]{\left(\left. #1 \right| #2 \right)}
\newcommand{\sfrac}[2]{\text{\fontsize{5}{5}\selectfont$\frac{#1}{#2}$}}
\newcommand{\R}{\ensuremath{\mathbb{R}}}
\newcommand{\hvarepsilon}{\hat\varepsilon_0}
\newcommand{\hsigma}{\hat\sigma}
\newcommand{\hosigma}{\hat\sigma}
\newcommand{\RMn}{\ensuremath{\overline{R}}}
\newcommand{\SW}{\ensuremath{\mathcal{W{}}}}
\newcommand{\cK}{\ensuremath{\mathcal{\overline{K}}}}
\newcommand{\SK}{\ensuremath{\mathcal{K{}}}}
\newcommand{\veZero}{\ensuremath{\varepsilon_1}}
\newcommand{\veOne}{\ensuremath{\varepsilon_2}}
\newcommand{\veTwo}{\ensuremath{\varepsilon_3}}
\newcommand{\veThree}{\ensuremath{\varepsilon_0}}
\newcommand{\BWv}{\ensuremath{\mathbf{W}}}
\newcommand{\BF}{\ensuremath{\mathbf{F}}}
\newcommand{\Rc}{\text{\rm Ric}}
\newcommand{\cg}{\ensuremath{c_{\gamma}}}
\newcommand{\csupp}{\ensuremath{c_{\text{supp}}}}
\newcommand{\gconc}{\ensuremath{\int_\Sigma \gamma^4\,d\mu}}
\newcommand{\Rcn}{\overline{\text{\rm Ric}}}
\newcommand{\nablan}{\overline{\nabla}}
\newcommand{\Sc}{{\text{\rm Sc}}}
\newcommand{\Scn}{\overline{\text{\rm Sc}}}
\newtheorem{thm}{Theorem}
\newtheorem{cor}[thm]{Corollary}
\newtheorem{prop}[thm]{Proposition}
\newtheorem{lem}[thm]{Lemma}
\theoremstyle{remark}
\newtheorem{rmk}{Remark}
\begin{document}

\title[Willmore flow of surfaces in Riemannian spaces I]{Willmore flow of surfaces in
Riemannian spaces I: Concentration-compactness}
\author{Jan Metzger
   \and Glen Wheeler
   \and Valentina-Mira Wheeler$^*$
   }
\thanks{* Corresponding author, \texttt{vwheeler@uow.edu.au}}
\address{Jan Metzger \& Valentina-Mira Wheeler (previous)\\Intit\"ut f\"ur Mathematik\\
         Universit\"at Potsdam\\
         Am Neuen Palais 10\\
         14469, Potsdam, Germany}
\address{Glen Wheeler (previous)\\Fakult\"at f\"ur Mathematik\\
         Intit\"ut f\"ur Analysis und Numerik\\
         Otto-von-Guericke Universit\"at Magdeburg\\
         Universit\"atsplatz 2\\
         39106, Magdeburg, Germany}
\address{Glen Wheeler \& Valentina-Mira Wheeler\\Institute for Mathematics and its Applications\\
         Faculty of Informatics and Engineering\\
         University of Wollongong\\
         Northfields Avenue, 2522\\
         Wollongong, NSW, Australia}

\begin{abstract}
In this paper we study the local regularity of
closed
surfaces immersed in a Riemannian ambient space $(N^3,\nIP{\cdot}{\cdot}$) flowing by Willmore flow.
We establish a pair of concentration-compactness alternatives for the flow, giving a lower bound on the maximal time of existence of the flow proportional to
the concentration of the curvature and area at initial time.
The estimate from the first theorem is purely in terms of the concentration of curvature at initial time but applies only to ambient spaces with non-positive
sectional curvature.
The second requires additional information on the concentration of area at initial time but applies in more general background spaces.
Applications of these results shall appear in forthcoming work.
\keywords{global differential geometry\and fourth order\and geometric analysis}
\subjclass{53C44\and 58J35}
\end{abstract}

\maketitle

\section{Introduction}


Suppose $f:\Sigma\rightarrow N$ is a smooth immersion of the surface $\Sigma$ into the smooth
three-dimensional complete Riemannian manifold $(N,\nIP{\cdot}{\cdot})$.  Let us equip $\Sigma$ with the metric
induced by $f$, so that $(\Sigma, f^*\nIP{\cdot}{\cdot})$ is a Riemannian manifold.
We assume that $(\Sigma, f^*\nIP{\cdot}{\cdot})$ is closed and complete.
Consider the functional
\[
\SW(f) = \frac14\int_\Sigma H^2\,d\mu\,,
\]
where $H$ is the mean curvature and $d\mu$ the measure induced via $f$.
Let us use $A$ to denote the second fundamental form of the immersion $f$, and $A^o$ to denote its tracefree part.
Surfaces which are critical for $\SW$ satisfy the Euler-Lagrange equation
\begin{equation}
\label{EQeulerlagrangevanilla}
\BWv(f) = \Delta H + H|A^o|^2 + H\Rcn(\nu,\nu) = 0\,,
\end{equation}
where $\Rcn$ denotes the Ricci curvature of $(N,\nIP{\cdot}{\cdot})$ and $\nu$ is the exterior unit normal vectorfield along $f$.
Note that \eqref{EQeulerlagrangevanilla} is invariant under change of orientation and reparametrisation.

In this work we study the steepest descent $L^2$-gradient flow of $\SW$, termed the Willmore flow.
These are one-parameter families of immersions $f:\Sigma\times[0,T)\rightarrow N$ satisfying
\begin{equation}
\label{EQwf}
\pD{}{t}f = -\BWv(f)\nu = -\big(\Delta H + H|A^o|^2 + H\Rcn(\nu,\nu)\big)\nu\,.
\end{equation}
We state the following local existence result.
The proof follows by first writing the solution as a graph over the initial manifold in the direction of the unit normal $\nu$ and then applying the standard
theory of higher-order degenerate quasilinear parabolic equations.
Details can be found in \cite[Chapter 3]{bakerthesis}.
See also \cite[Chapter 5]{eidelman1998pbv}, \cite{solonnikov1965bvp} and \cite{shuanhuli}.
\begin{thm}[Local existence for Willmore flow in a Riemannian space]
\label{TMstevanilla}
Suppose $f_0:\Sigma\rightarrow N$ is a smooth immersed surface.
Assume that the induced Riemannian structure $(\Sigma, f^*\nIP{\cdot}{\cdot})$ is complete and
closed.
Then there exists a maximal $T>0$ and a one-parameter family of smooth immersions
$f:\Sigma\times[0,T)\rightarrow N$ such that
\begin{align*}
\pD{}{t}f &= -\BWv(f)\nu\,,\quad\text{and}
\\
f(\cdot,0) &= f_0(\cdot)\,.
\end{align*}
The family $f$ is called a \emph{Willmore flow with initial data $f_0$}.
\end{thm}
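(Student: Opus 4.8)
The plan is the one indicated above the statement: convert the geometric flow \eqref{EQwf} into a single scalar quasilinear parabolic equation for a height function, solve that equation with the standard theory for higher-order parabolic problems, and then undo the graph representation.

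First I would fix $f_0$ together with its unit normal $\nu_0$. Since $(\Sigma,f_0^*\nIP{\cdot}{\cdot})$ is closed, $f_0(\Sigma)$ has a tubular neighbourhood in $N$, so immersions close to $f_0$ can be written through the ambient exponential map as
\[
f_\phi(p,t)=\exp^N_{f_0(p)}\big(\phi(p,t)\,\nu_0(p)\big),\qquad \phi:\Sigma\times[0,T)\to\R,\quad \phi(\cdot,0)\equiv 0,
\]
and for $\phi$ small in $C^1$ this is again an immersion, hence remains one for a short time by continuity. Writing the induced metric, the second fundamental form, the mean curvature $H$ and the unit normal $\nu$ of $f_\phi$ in terms of $\phi$ and its spatial derivatives (up to order four through the term $\Delta H$), one checks that $\BWv(f_\phi)$ is a fourth-order quasilinear operator in $\phi$ with principal part $-\Delta^2$, and that $\nIP{\partial_t f_\phi}{\nu}=\theta(\phi,D\phi)\,\partial_t\phi$ where $\theta(0,0)=1$, because the radial geodesics have unit speed and $\nu=\nu_0$ at $\phi=0$. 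Requiring the normal speed of $f_\phi$ to equal $-\BWv(f_\phi)$ thus produces the scalar initial value problem
\[
\partial_t\phi=-\frac{1}{\theta(\phi,D\phi)}\,\BWv(f_\phi)=:F\big(\phi,D\phi,D^2\phi,D^3\phi,D^4\phi\big),\qquad \phi(\cdot,0)=0,
\]
whose linearisation at $\phi=0$ has principal symbol $-|\xi|^4<0$; the equation is therefore non-degenerately parabolic, and it is at this step that the tangential degeneracy of \eqref{EQwf} is removed.

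Next I would solve this scalar equation by the usual route: working in the parabolic Hölder spaces $C^{4+\alpha,(4+\alpha)/4}(\Sigma\times[0,\tau])$, linearise $F$ at $\phi=0$, solve the linear fourth-order parabolic problem via Schauder estimates, and close a contraction mapping argument on a short interval $[0,\tau]$ to obtain a unique solution $\phi$ (cf. \cite[Chapter 3]{bakerthesis}, and \cite[Chapter 5]{eidelman1998pbv}, \cite{solonnikov1965bvp}, \cite{shuanhuli}). Because $f_0$ is smooth, a bootstrap --- differentiating the equation and using interior parabolic Schauder estimates --- promotes $\phi$ to $C^\infty(\Sigma\times[0,\tau])$; taking the supremum over all $\tau$ for which such a smooth solution exists and patching by uniqueness then yields a maximal $T>0$ and a maximal smooth $\phi$ on $[0,T)$. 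Finally, by construction $\partial_t f_\phi=-\BWv(f_\phi)\,\nu+X$ with $X$ tangential to $f_\phi(\Sigma)$; writing $X=df_\phi(Y_t)$ for a smooth time-dependent vector field $Y_t$ on $\Sigma$ and letting $\psi_t:\Sigma\to\Sigma$ solve $\partial_t\psi_t=-Y_t\circ\psi_t$ with $\psi_0=\mathrm{id}$ --- a family of diffeomorphisms defined on all of $[0,T)$ since $\Sigma$ is compact --- the reparametrised family $f:=f_\phi\circ\psi_t$ is a smooth one-parameter family of immersions with $f(\cdot,0)=f_0$ and, by the equivariance of $\BWv$ and $\nu$ under reparametrisation, $\partial_t f=-\BWv(f)\,\nu$, as required.

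I expect the only real labour to be in the first step --- expanding $\BWv(f_\phi)$ far enough to exhibit the $-\Delta^2$ leading term and to check that the coefficients of $F$ carry the regularity demanded by the quasilinear theory --- while the remaining steps reduce to an application of essentially off-the-shelf parabolic existence theory and of elementary ODE theory on a compact manifold.
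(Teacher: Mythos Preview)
Your proposal is correct and follows precisely the approach the paper indicates: the paper does not give a detailed proof but simply remarks that one writes the solution as a graph over the initial manifold in the normal direction and applies the standard theory of higher-order quasilinear parabolic equations, citing the same references you invoke. Your write-up is in fact considerably more detailed than what the paper provides, and the extra steps you outline (the exponential-map graph, the $-\Delta^2$ principal part, the contraction in parabolic H\"older spaces, and the reparametrisation to kill the tangential component) are exactly the content behind the paper's one-line sketch.
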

Our main result is the following pair of concentration-compactness alternatives for the flows \eqref{EQwf}, sometimes termed \emph{Lifespan Theorems} \cite{MWW10,W11}.

\begin{thm}
\label{TMlifespanvanilla}
Let $(N,\nIP{\cdot}{\cdot})$ be a smooth simply-connected Riemannian 3-manifold with non-positive sectional curvature.
That is,
\begin{equation}
\label{EQambeintassumptionsforlifespan}
\SK :=
\sup_{p\in N,\,X,Y\in T_pN} \Rcn(X,Y)(p)
\le 0
\,.
\end{equation}
Above we have used $\Rcn$ to denote the Ricci curvature of $(N,\nIP{\cdot}{\cdot})$.
Suppose $f:\Sigma\times[0,T)\rightarrow N$ is a one-parameter family of closed immersed surfaces with smooth initial
data evolving by \eqref{EQwf}.
Then there are constants $\rho>0$, $\veThree>0$, and $c<\infty$ depending only on the metric of $N$ and $\vn{\overline{\nabla_{(k)}\Rc}}_\infty$
for $k\in\{0,1,2,3,4,5\}$ such that if $\rho$ is chosen with
\begin{equation}
\label{EQsmallconcentrationcondition}
\int_{f^{-1}(B_\rho(x))} |A|^2 d\mu\Big|_{t=0} = \varepsilon(x) \le \veThree
\qquad
\text{ for all $x\in N$},
\end{equation}
then the maximal time $T$ of smooth existence satisfies
\begin{equation}
\label{EQmaximaltimeestimatevanilla}
T \ge \frac{1}{c}\rho^4,
\end{equation}
and we have the estimate
\begin{equation}
\label{EQconcentrationestimatevanilla}
\int_{f^{-1}(B_\rho(x))} |A|^2 d\mu \le c\varepsilon(x)
\qquad\qquad\qquad\qquad\hskip-1mm\text{ for all }
t \in \Big[0, \frac{1}{c}\rho^4\Big].
\end{equation}
\end{thm}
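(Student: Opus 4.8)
The plan is to prove Theorem~\ref{TMlifespanvanilla} by a localised energy estimate closed off with a continuity argument, following the template of the Lifespan Theorems \cite{MWW10,W11}. Fix $x\in N$ and choose a cutoff $\T\gamma:N\to[0,1]$ with $\T\gamma\equiv1$ on $B_{\rho/2}(x)$, $\{\T\gamma>0\}\subset B_\rho(x)$, and $|\nablan_{(m)}\T\gamma|\le c\rho^{-m}$ for $0\le m\le3$; set $\gamma=\T\gamma\circ f$, a function on $\Sigma\times[0,T)$. Since $\partial_tf=-\BWv(f)\nu$ we have $\partial_t\gamma=-\BWv(f)\langle\nablan\T\gamma,\nu\rangle$, while on $\Sigma$ one has $|\nabla\gamma|\le c\rho^{-1}$ and, for $m\ge2$, $\nabla_{(m)}\gamma$ additionally carries factors of $A$ and its derivatives through the second fundamental form of $f$. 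We monitor $E(t):=\int_\Sigma|A|^2\gamma^4\,d\mu$, noting $\int_{f^{-1}(B_{\rho/2}(x))}|A|^2\,d\mu\le E(t)$ and $E(0)\le\varepsilon(x)$. Fixing $C_1>1$, let $t^\star$ be the supremum of those $\tau\in[0,T)$ for which $E(\tau')\le C_1\varepsilon(x)$ on $[0,\tau]$ for every $x\in N$; by \eqref{EQsmallconcentrationcondition} we have $t^\star>0$, and it suffices to show $t^\star\ge c^{-1}\rho^4$, after which the flow continues (yielding \eqref{EQmaximaltimeestimatevanilla}) and a rescaling of $\rho$ yields \eqref{EQconcentrationestimatevanilla}.

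The analytic core is the differential inequality for $E$ along \eqref{EQwf}. Combining the first-variation formulae for $g$, $A$, $H$ and $d\mu$ under a normal deformation with speed $\BWv(f)=\Delta H+H|A^o|^2+H\Rcn(\nu,\nu)$, the Simons-type identity for $\Delta A$, and repeated commutation of covariant derivatives --- which in a curved target produces the ambient curvature tensor and, through the Gauss and Codazzi equations, its intrinsic traces and tangential derivatives --- and then integrating by parts, one reaches an estimate of the schematic form
\begin{align*}
\frac{d}{dt}\int_\Sigma|A|^2\gamma^4\,d\mu+\int_\Sigma|\nabla_{(2)}A|^2\gamma^4\,d\mu
&\le c\,\rho^{-4}\int_{\{\gamma>0\}}|A|^2\,d\mu+c\sum_{i,j}\int_\Sigma P^i_j(A)\,\gamma^{s}\,d\mu \\
&\qquad+c\sum_{k=0}^{5}\vn{\overline{\nabla_{(k)}\Rc}}_\infty\int_\Sigma Q_k(A)\,\gamma^{s}\,d\mu\,,
\end{align*}
where the sums are finite, $P^i_j(A)$ denotes a universal scalar obtained by completely contracting $j\ge2$ factors of $A$ carrying $i$ derivatives in total, $Q_k(A)$ a similar scalar with at least two factors of $A$, and $s$ is a power of $\gamma$ (depending on the term) no smaller than $4-i$, with $i$ counting the derivatives that fall on $\gamma$. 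That only $\overline{\nabla_{(k)}\Rc}$ with $k\le5$ occurs is a matter of counting derivatives: $\BWv$ carries the ambient curvature undifferentiated, $\partial_tA$ carries it with two derivatives, and the integrations by parts that expose the good term $\int|\nabla_{(2)}A|^2\gamma^4$ shift at most three further derivatives onto the curvature. Crucially, \emph{every} error term carries at least two factors of $A$.

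Next one absorbs the whole right-hand side into $\tfrac12\int_\Sigma|\nabla_{(2)}A|^2\gamma^4\,d\mu$ plus $c\rho^{-4}\int_{\{\gamma>0\}}|A|^2\,d\mu$. The tools are the divided-difference interpolation inequalities for $\int|\nabla_{(j)}A|^2\gamma^{s}$ on surfaces --- proved, as in the Euclidean theory, from a Michael--Simon--Sobolev inequality, which holds on $B_\rho(x)$ with constants controlled by the data of $N$ --- together with the smallness of the curvature concentration: since every $P^i_j(A)\,\gamma^{s}$ not already of the required form is superlinear in $A$, after interpolation it appears with a prefactor $\big(\int_{\{\gamma>0\}}|A|^2\,d\mu\big)^{1/m}\le(C_1\veThree)^{1/m}$ on $[0,t^\star)$, made as small as needed by shrinking $\veThree$; a representative instance is $\int_\Sigma|A|^6\gamma^4\,d\mu\le c\big(\int_{\{\gamma>0\}}|A|^2\,d\mu\big)\big(\int_\Sigma|\nabla_{(2)}A|^2\gamma^4\,d\mu+\rho^{-4}\int_{\{\gamma>0\}}|A|^2\,d\mu\big)$. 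The ambient-curvature terms are handled the same way once one notes that $\gamma$ is supported in $f^{-1}(B_\rho(x))$, so $\vn{\overline{\nabla_{(k)}\Rc}}_\infty\int Q_k(A)\,\gamma^{s}$ is bounded by $\vn{\overline{\nabla_{(k)}\Rc}}_\infty$ times a localised curvature expression of the same type, the worst case being $\vn{\overline{\nabla_{(k)}\Rc}}_\infty\int_{\{\gamma>0\}}|A|^2\,d\mu\le\vn{\overline{\nabla_{(k)}\Rc}}_\infty C_1\varepsilon(x)$, which is harmless since $\rho$ may be taken no larger than a constant fixed by the data of $N$. Hypothesis \eqref{EQambeintassumptionsforlifespan} is used at this stage; it is precisely what is relaxed, at the price of an additional hypothesis on the area concentration, in the second Lifespan Theorem.

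Given on $[0,t^\star)$ the reduced inequality $\tfrac{d}{dt}E+\tfrac12\int_\Sigma|\nabla_{(2)}A|^2\gamma^4\,d\mu\le c\rho^{-4}\int_{\{\gamma>0\}}|A|^2\,d\mu$ and the bootstrap bound $\int_{\{\gamma>0\}}|A|^2\,d\mu\le C_1\varepsilon(x)$ there, integrating in time and using $E(0)\le\varepsilon(x)$ gives $E(t)\le\varepsilon(x)\big(1+cC_1\rho^{-4}t\big)$. Choosing $C_1=2$, this stays strictly below $C_1\varepsilon(x)$ while $t\le(2cC_1)^{-1}\rho^4$; taking the supremum over $x$ and invoking continuity of $t\mapsto E(t)$ forces $t^\star\ge(2cC_1)^{-1}\rho^4$ unless $t^\star=T$. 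In the latter case the bootstrap bound holds throughout $[0,T)$, so a parabolic $\varepsilon$-regularity estimate --- small concentrated $\int|A|^2$ implies interior pointwise bounds on $\sup|\nabla_{(k)}A|$ --- shows all geometric quantities remain smoothly bounded up to $t=T$, contradicting the maximality of $T$; hence $T\ge(2cC_1)^{-1}\rho^4$ in all cases, which is \eqref{EQmaximaltimeestimatevanilla} after renaming $c$, while $\int_{f^{-1}(B_{\rho/2}(x))}|A|^2\,d\mu\le E(t)\le2\varepsilon(x)$ on $[0,(2cC_1)^{-1}\rho^4)$ gives \eqref{EQconcentrationestimatevanilla} up to a harmless adjustment of the radius. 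The main obstacle is the second step: organising the evolution equation in a Riemannian target so that the displayed inequality holds with every error term genuinely quadratic or higher in $A$ (hence controllable purely through the curvature concentration and $\vn{\overline{\nabla_{(k)}\Rc}}_\infty$) and with the commutator and Gauss--Codazzi bookkeeping closing at $k=5$, and checking that the Euclidean interpolation and Sobolev apparatus transfers to geodesic balls in $N$.
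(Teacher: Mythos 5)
Your overall strategy matches the paper's: a localised $L^2$ estimate on $\int_\Sigma|A|^2\gamma^4\,d\mu$ closed off by a continuity/bootstrap argument, with the Hoffman--Spruck Sobolev inequality (valid unconditionally here because simple connectivity and non-positive sectional curvature give infinite injectivity radius via Cartan--Hadamard and force \eqref{EQambientcurvass}--\eqref{EQambientinjass}) doing the absorption, followed by a regularity/extension argument to rule out $t^\star=T<\infty$. Your ``$t^\star$'' and ``$C_1$'' play exactly the role of the paper's $t_0$ and $3c_\eta$ in \eqref{e:struweparameter}, and your step \eqref{e:9} is established through the same pointwise $|A|$ bound, so the structure is essentially the same. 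Two points where your sketch diverges from the paper and one where it overstates are worth flagging.

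First, your assertion that ``\emph{every} error term carries at least two factors of $A$'' is not accurate: Lemma~\ref{LMenergybasic} retains a term $c\int_\Sigma|\overline{\nabla_{(3)}\Rc}|^2\gamma^4\,d\mu$ that is \emph{independent} of $A$, arising from splitting $\int|A|\,|\overline{\nabla_{(3)}\Rc}|\gamma^4$ by Young, and the bootstrap propositions carry analogous terms with $\overline{\nabla_{(4)}\Rc}$ and $\overline{\nabla_{(5)}\Rc}$. These cannot be absorbed by smallness of $\int_{[\gamma>0]}|A|^2$ alone; Proposition~\ref{PRconcentestvanilla} keeps them as $\sup_{[0,T)}\int_{[\gamma>0]}|\overline{\nabla_{(3)}\Rc}|^2\,d\mu$ on the right. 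If you want the clean reduced inequality $\tfrac{d}{dt}E+\tfrac12\int|\nabla_{(2)}A|^2\gamma^4\le c\rho^{-4}\int_{[\gamma>0]}|A|^2$, you must either have vanishing higher covariant derivatives of ambient Ricci or else account for the ambient-curvature integral explicitly (ultimately through a local area bound once the pointwise $|A|$ estimate is in hand). Your schematic differential inequality as written drops these terms and should not.

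Second, your extension argument at $t^\star=T$ is routed differently: you invoke full parabolic $\varepsilon$-regularity to conclude smooth bounds on all $\nabla_{(k)}A$ and contradict maximality directly, whereas the paper stops at the sup bound on $|A|$ (Corollary~\ref{CYpointwisebound1}, itself relying on the bootstrap Propositions~\ref{PRbootstrappingfirststep}--\ref{PRbootstrappingsecondstep}), extracts a $C^1$ limit via a compactness theorem, and then restarts the flow using the Koch--Lamm local existence result for Lipschitz initial data. Both routes are viable; the paper's needs only $C^{0,1}$ control at the blow-up time and is the cheaper option, while yours requires carrying the bootstrap through all orders of derivatives (which the paper's machinery would in principle supply, but which is more work than is actually needed). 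You should also make the covering step explicit: the bootstrap quantity is controlled on balls of radius $\rho/2$, and passing back to radius $\rho$ to close the continuity argument requires a fixed covering constant depending only on the metric of $N$, which is \eqref{e:epscovered} in the paper.
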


\begin{rmk}[Dependence on the metric of $N$]
Covering arguments are used in the proof of Theorem \ref{TMlifespanvanilla} above and Theorem
\ref{TMlifespanvanillapositivecurvature} below. Here the number of balls needed to cover a ball with
a given radius in $N$ depends on the metric locally around the largest ball. In addition to bounds
on the curvature of $N$ and the injectivity radius of $N$, this is the only way in which the
constants depend on the metric of $N$.  Note for Theorem \ref{TMlifespanvanilla} the hypotheses
imply that the injectivity radius of $N$ is unbounded.
\end{rmk}

In spaces with some positive curvature, the required Sobolev inequality fails for relatively `large' submanifolds.  In order to compensate for this,
we supplement the argument used for Theorem \ref{TMlifespanvanilla} with control of the concentration of area along the flow.  Our result in this case
is the following.

\begin{thm}[Lifespan theorem for Willmore flow in a Riemannian 3-manifold]
\label{TMlifespanvanillapositivecurvature}
Let $(N,\nIP{\cdot}{\cdot})$ be a smooth Riemannian 3-manifold with positive injectivity radius $\rho_N$.
Suppose $f:\Sigma\times[0,T)\rightarrow N$ is a one-parameter family of immersed surfaces with smooth initial
data evolving by \eqref{EQwf}.
Then there are constants $\rho>0$, $\hvarepsilon>0$, $\hsigma_0>0$ and $c<\infty$ depending only on the metric of $N$
and
$\vn{\overline{\nabla_{(k)}\Rc}}_\infty$ for $k\in\{0,1,2,3,4,5\}$ such that if $\rho$ is chosen with
\begin{equation*}
\int_{f^{-1}(B_\rho(x))} |A|^2 d\mu\Big|_{t=0} = \varepsilon(x) \le \hvarepsilon
\qquad
\text{ for any $x\in N$, and}
\end{equation*}
\begin{equation}
\label{EQsmallconcentrationconditionpositivecurvature}
\big|\Sigma\big|_{f^{-1}(B_\rho(x))}\big|_{t=0} := \int_{f^{-1}(B_\rho(x))} d\mu\Big|_{t=0} = \sigma(x) \le \hsigma_0
\qquad
\text{ for any $x\in N$},
\end{equation}
then the maximal time $T$ of smooth existence satisfies
\begin{equation}
\label{EQmaximaltimeestimate}
T \ge \frac{1}{c}\rho^4,
\end{equation}
and we have the estimate
\begin{equation}
\label{EQconcentrationestimate}
\int_{f^{-1}(B_\rho(x))} |A|^2 d\mu
+ \big|\Sigma\big|_{f^{-1}(B_\rho(x))}
 \le c(\varepsilon(x) + \sigma(x))
\qquad\text{ for }\qquad
0\le t \le \frac{1}{c}\rho^4.
\end{equation}
\end{thm}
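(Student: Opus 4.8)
The plan is to adapt the strategy used for Theorem 1.3 by running a simultaneous estimate on the localized curvature energy and the localized area. First I would fix a smooth cutoff function $\gamma$ supported in $f^{-1}(B_{2\rho}(x))$ and equal to $1$ on $f^{-1}(B_\rho(x))$, with $|\nabla\gamma|\le c/\rho$ and similar bounds on higher derivatives, and pulled back from a geodesic-ball cutoff in $N$. The central object is the localized energy $\int_\Sigma |A^o|^2\gamma^4\,d\mu$ (or $\int |A|^2\gamma^4$), whose evolution under \eqref{EQwf} I would compute via the standard Willmore-flow commutation identities: $\partial_t|A^o|^2$ produces the leading term $-2|\nabla^2 A|^2\gamma^4$-type dissipation plus lower-order terms, while $\partial_t(d\mu) = -HW\,d\mu$ and $\partial_t\gamma$ contributes terms controlled by $\rho^{-4}$ times area and curvature. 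The key difference from the non-positively-curved case is that when I integrate by parts to absorb the curvature-bracket terms, I must invoke the Michael–Simon Sobolev inequality in $N$, which — for surfaces whose diameter and area are small relative to the injectivity radius and curvature bounds — holds with a controlled constant; this is precisely why the area smallness hypothesis \eqref{EQsmallconcentrationconditionpositivecurvature} is needed, since in positive curvature the Sobolev inequality on large pieces of $\Sigma$ carries an extra additive geometric term. The ambient curvature terms $H\Rcn(\nu,\nu)$ and their derivatives up to order five enter here and in the Simons-type identity, which is why the constants depend on $\|\overline{\nabla_{(k)}\Rc}\|_\infty$ for $k\le 5$.

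Next I would close the differential inequality: writing $\xi(t) = \int_\Sigma |A^o|^2\gamma^4\,d\mu$ and $\eta(t) = \int_\Sigma \gamma^4\,d\mu$, the computation should yield something of the form $\frac{d}{dt}(\xi+\eta) \le -\tfrac14\int |\nabla^2 A|^2\gamma^4 + c\rho^{-4}(\xi+\eta) + (\text{small}) \cdot \int|\nabla^2 A|^2\gamma^4$, where the smallness of the last coefficient comes from the energy hypotheses $\varepsilon(x)\le\hvarepsilon$ and $\sigma(x)\le\hsigma_0$ together with the Sobolev inequality — this is the interpolation step where $\int|\nabla A|^2|A|^2$-type terms get bounded by $\varepsilon$ (or $\sigma$) times $\int|\nabla^2 A|^2$. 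Choosing $\hvarepsilon$ and $\hsigma_0$ small enough that this last term is absorbed by the dissipation, I obtain $\frac{d}{dt}(\xi+\eta)\le c\rho^{-4}(\xi+\eta)$ on the interval where the smallness is maintained. Gronwall's inequality then gives $(\xi+\eta)(t) \le e^{ct/\rho^4}(\xi(0)+\eta(0)) \le e^{ct/\rho^4}(\varepsilon(x)+\sigma(x))$.

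To pass from this to the stated estimate \eqref{EQconcentrationestimate} and the lifespan bound \eqref{EQmaximaltimeestimate}, I would run a continuity (bootstrap) argument: let $t^*$ be the largest time up to which $\xi(t)+\eta(t)\le 2e(\varepsilon(x)+\sigma(x))$ holds for all $x$; on $[0,t^*]$ the differential inequality is valid, so for $t\le \tfrac1c\rho^4$ with $c$ large we get $\xi(t)+\eta(t)\le e^{1/c}\cdot\hspace{0pt}$(something)$\,\le\, \tfrac32 e(\varepsilon(x)+\sigma(x))$, strictly improving the bound and showing $t^* \ge \tfrac1c\rho^4$. Converting the $\gamma^4$-weighted bound into the honest integral over $f^{-1}(B_\rho(x))$ (where $\gamma\equiv1$) and controlling $|A|^2 \le |A^o|^2 + \tfrac12 H^2$ with $H^2$ handled via the global Willmore-energy monotonicity — or by including $H^2\gamma^4$ in the bootstrapped quantity directly — yields \eqref{EQconcentrationestimate}. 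Finally, smooth existence up to $T\ge\tfrac1c\rho^4$ follows because the uniform local curvature bound, upgraded to bounds on all derivatives of $A$ via the interior estimates for the parabolic system (again using the local energy smallness), prevents any singularity from forming before that time; a covering argument over finitely many balls $B_\rho(x_i)$ — with the number of balls depending on the local geometry of $N$, as noted in the remark — makes this global on $\Sigma$. I expect the main obstacle to be the bookkeeping in the Sobolev step: carefully isolating exactly which geometric quantity (area versus curvature energy) must be small to get the Michael–Simon constant under control in a space of mixed-sign curvature, and ensuring the ambient curvature corrections in all the Simons-type and interpolation inequalities are genuinely absorbable rather than merely small.
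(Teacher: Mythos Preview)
Your overall strategy is sound and close to the paper's, but the organization and the precise form of the differential inequality differ in ways worth noting.

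The paper does \emph{not} run a single coupled Gronwall on $\xi+\eta$. Instead it first isolates the area control as a separate result (Proposition~\ref{PRconcentarea}): on the maximal interval $[0,t_0)$ where the local area satisfies $\int\gamma^4\,d\mu<\csupp$, the Hoffman--Spruck inequality is available, so Propositions~\ref{PRconcentestvanilla}--\ref{PRbootstrappingsecondstep} apply and feed back into the area evolution, yielding a bound of the form $\int\gamma^4\,d\mu\le c_4(\hsigma_0+\hvarepsilon)(1+t_0)^4$ via an integral Gronwall inequality. This forces $t_0$ to be large. With area control in hand on a definite time interval, the paper then reruns the three-step argument for Theorem~\ref{TMlifespanvanilla} essentially unchanged, replacing Proposition~\ref{PRconcentestvanilla} by Corollary~\ref{CYconcentestvanillapositivecurvature} and Corollary~\ref{CYpointwisebound1} by Corollary~\ref{CYpointwisebound2}. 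So the paper decouples: area first, then curvature; you couple them.

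Your coupled approach should also work, but the clean inequality $\tfrac{d}{dt}(\xi+\eta)\le c\rho^{-4}(\xi+\eta)$ is too optimistic as written. The evolution of $\int\gamma^4\,d\mu$ contains terms like $\int|\nabla_{(2)}A|^2\gamma^4$ and $\int|A|^6\gamma^4$ (see the computation in the proof of Proposition~\ref{PRconcentarea}) that are not bounded pointwise-in-time by $c\rho^{-4}(\xi+\eta)$; they are controlled only \emph{after} integrating in time using the dissipation from the curvature estimate. So your Gronwall step must be an integral one, and the resulting bound is polynomial in $t$ rather than exponential. This is a bookkeeping issue, not a conceptual gap, but you should expect the ODE to be messier than you indicated. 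The paper's separation of the two estimates makes this dependence more transparent.
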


\begin{rmk}
The universal constants $\veThree$, $\hvarepsilon$ and $\sigma$ are, given $N$, computable and not the result of an abstract existence proof.
\end{rmk}

In a more global sense, we present Theorems \ref{TMlifespanvanilla} and \ref{TMlifespanvanillapositivecurvature} with a perspective toward further analysis of
the flow \eqref{EQwf}.
In particular, as the statement depends on the concentration of the curvature of the initial surface, the result is particularly relevant to the analysis of
asymptotic behaviour in the following respect.  When considering a blowup of a singularity formed at some time $T<\infty$ of the constrained Willmore flow, we
wish to have that some amount of the curvature concentrates in space. For example, from Theorem \ref{TMlifespanvanilla}, if $\rho(t)$ denotes the
largest radius such that \eqref{EQsmallconcentrationcondition} holds at time $t$, then $\rho(t) \le \sqrt[4]{c(T-t)}$ and so at least
$\veThree$ of the curvature concentrates in a ball $f^{-1}(B_{\rho(T)}(x))$.  That is,
\[
\lim_{t\rightarrow T} \int_{f^{-1}(B_{\rho(t)}(x))} |A|^2 d\mu \ge \veThree\,,
\]
where $x = x(t)$ is understood to be the centre of a ball where the integral above is maximised.  This will be a fundamental property of blowups considered in
an upcoming paper.

%


\section{Notation and setting.}

In this section we collect various general formulae from the differential geometry of submanifolds which we need for later analysis.  We use notation
similar to that of Hamilton \cite{RH} and Huisken \cite{huisken1984fmc,huisken86riemannian}. We have as our principal object of study a smooth
immersion $f:\Sigma\rightarrow (N,\nIP{\cdot}{\cdot})$ of an orientable surface $\Sigma$ into a Riemannian 3-manifold $(N,\nIP{\cdot}{\cdot})$.  The
definitions below are understood with respect to a local orthonormal frame $\{e_0,e_1,e_2\}$ of $N$, such that restricted to $\Sigma$ we have $e_0 =
\nu$ and $e_i = \pD{f}{x_i}$ for $i=1,2$.  Further, we take this frame to induce normal coordinates on $\Sigma$, so that the Christoffel symbols on
$\Sigma$ and $N$ vanish at a single point.  Throughout the paper we perform all calculations with respect to this local orthonormal frame and these
coordinates.

The immersion induces a Riemannian metric on $\Sigma$ with components
\[
g_{ij} = \nIP{\pD{}{x_i}f}{\pD{}{x_j}f},
\]
so that the pair $(M,g)$ is a Riemannian submanifold of $(N,\nIP{\cdot}{\cdot})$.
We use
\[
\sIP{X}{Y} = g_{ij}X^iY^j
\]
to denote the inner product between $X$ and $Y$ with respect to the metric $g$.
Here and for the rest of the paper we have adopted the convention that repeated indices are summed from 1 to 2.

The Riemannian metric induces an inner product structure on all tensor fields over $\Sigma$, not just vector
fields.
This is realised as the trace over pairs of indices with the metric:
\[
\sIP{T^{i}_{jk}}{S^i_{jk}} = g_{is}g^{jr}g^{ku}T^i_{jk}S^s_{ru},\qquad |T|^2 = \sIP{T}{T}\,.
\]
In the above formula $g^{ij} = (g^-1)_{ij}$ where $g^-1$ is the inverse of $g$.
For tensors $\overline{T}$ defined purely on $N$, we use the convention that $|\overline{T}|^2 =
\nIP{\overline{T}}{\overline{T}}$ unless otherwise specified.
The mean curvature $H$ is given by
\[
H = g^{ij}A_{ij} = A_i^i,
\]
where the components $A_{ij}$ of the second fundamental form $A$ are
\begin{equation}
A_{ij} = \nIP{\nablan_j\pD{}{x_i}f}{\nu}\,.
\label{EQsff}
\end{equation}
Here we use $\nablan$ to refer to the Levi-Civita connection on $N$.
Throughout the paper we use a bar to indicate quantities on $N$.
We note that this is the opposite sign convention of Huisken \cite{huisken1984fmc,huisken86riemannian}, and
the same sign convention as Kuwert \& Sch\"atzle \cite{kuwert2001wfs,kuwert2002gfw}.
The Christoffel symbols of the induced connection are determined by the metric,
\begin{equation}
\label{C3Echristoffelmetric}
\Gamma_{ij}^k = \frac{1}{2}g^{kl}
                \left(\pD{}{x_i}g_{jl} + \pD{}{x_j}g_{il} - \pD{}{x_l}g_{ij}\right),
\end{equation}
with the covariant derivative on $\Sigma$ of a vector $X$ and of a covector $Y$ is
\begin{align*}
\nabla_jX^i &= \pD{}{x_j}X^i + \Gamma^i_{jk}X^k\text{, and}\\
\nabla_jY_i &= \pD{}{x_j}Y_i - \Gamma^k_{ij}Y_k
\end{align*}
respectively.

The second fundamental form is symmetric and satisfies the Codazzi equations:
\[
\nabla_iA_{jk} - \nabla_jA_{ik} = \RMn_{0ijk},
\]
Here $\RMn$ denotes the curvature tensor of $N$
\[
\RMn_{ijkl}{\overline{g}}^{lm}\partial_m = \big(\nablan_{ij} - \nablan_{ji}\big)\partial_k\,.
\]
The curvature tensor $R$ on $\Sigma$ is defined analogously.
The second fundamental relation between components of the Riemann curvature tensor $R$ on $\Sigma$, the
curvature tensor $\RMn$ on $N$, and the second fundamental form $A$ on $\Sigma$, is given by Gauss' equation
\begin{align*}
R_{ijkl} = \RMn_{ijkl} &+ A_{il}A_{jk} - A_{ik}A_{jl},\intertext{with contractions}
g^{jk}R_{ijkl}
   = \Rc(e_i,e_l)
  &= g^{jk}\RMn_{ijkl} + HA_{il} - A_{ij}A_l^j\\
  &= \Rcn(e_i,e_l) - \Rcn(\nu,\nu) + HA_{il} - A_{ij}A_l^j\,,\quad\text{and}\\
g^{il}\Rc(e_i,e_l)
   = \Sc
  &= g^{il}g^{jk}\RMn_{ijkl} + H^2 - |A|^2
\\
  &= \Scn - 2\Rcn(\nu,\nu) + H^2 - |A|^2\,,
\end{align*}
where $\Sc$, $\Scn$ is the scalar curvature of $(\Sigma,g)$ and $(N,\nIP{\cdot}{\cdot})$ respectively.
We will need to interchange covariant derivatives; for vectors $X$ and covectors $Y$ we obtain
\begin{align*}
\nabla_{ij}X^h - \nabla_{ji}X^h &= R_{lijk}g^{hl}X^k
                                 = \RMn_{lijk}g^{hl}X^k - (A_{lj}A_{ik}-A_{lk}A_{ij})g^{hl}X^k\,,\\
\nabla_{ij}Y_k - \nabla_{ji}Y_k &= R_{ijkl}g^{lm}Y_m
                                 = \RMn_{ijkl}g^{lm}Y_m - (A_{lj}A_{ik}-A_{il}A_{jk})g^{lm}Y_m\,,
\end{align*}
where $\nabla_{i_1\ldots i_n} = \nabla_{i_1} \cdots \nabla_{i_n}$.  Further for $T$ a tensor field of type
$(p,q)$ we define $\nabla_{(r)}T$ to be the tensor field of order $(p,q+r)$ with components $\nabla_{i_1\ldots
i_r}T_{j_1\ldots j_p}^{k_1\ldots k_q}$.
We also use for tensors $T$ and $S$ the notation $T*S$ (as in Hamilton \cite{RH}) to denote a linear
combination of new tensors, each formed by contracting pairs of indices from $T$ and $S$ by the metric $g$
with multiplication by a universal constant.  The resultant tensor will have the same type as the other
quantities in the equation it appears.  Keeping these in mind we also denote polynomials in the
iterated covariant derivatives of these terms by
\[
P_j^i(T) = \sum_{k_1+\ldots+k_j = i} c_{k_1\cdots k_j}\nabla_{(k_1)}T*\cdots*\nabla_{(k_j)}T\,,
\]
where $k_1,\ldots,k_j\ge0$ and the constants $c_{k_1\cdots k_j}\in\R$ are absolute.
We use the convention that $P_j^i(T) = 0$ if $i<0$ or $j\le0$.
At times we will need to consider polynomials in multiple tensors or functions, for which we shall use the
notation
\begin{align*}
P_{\alpha,j}^i(S,T) = \sum_{k_1+\ldots+k_j+\beta_1+\ldots+\beta_\alpha = i}
                      c_{k_1\cdots k_j\beta_1\cdots \beta_\alpha}
                      &\Big(\nabla_{(\beta_1)}S*\cdots*\nabla_{(\beta_\alpha)}S\Big)
\\
                     *&\Big(\nabla_{(k_1)}T*\cdots*\nabla_{(k_j)}T\Big)\,,
\end{align*}
where $k_1,\ldots,k_j,\beta_1,\ldots,\beta_\alpha\ge0$ and the constants $c_{k_1\cdots k_j\beta_1\cdots\beta_\alpha}\in\R$ are absolute.
As is common for the $*$-notation, we slightly abuse these constants when certain subterms do not appear in
our $P$-style terms.
For example
\begin{align*}
|\nabla A|^2    &= \sIP{\nabla A}{\nabla A}
	\\	&= 1\cdot\left(\nabla_{(1)}A*\nabla_{(1)}A\right)
                    + 0\cdot\left(\nabla_{(2)}A*A\right)
                    + 0\cdot\left(A*\nabla_{(2)}A\right)
		= P_2^2(A)\,.
\end{align*}
This will occur throughout the paper without further comment.

The Laplace-Beltrami operator on $\Sigma$ acting on a tensor $T$ is given by
\[
\Delta T^i_{jk} = g^{pq}\nabla_{pq}T^i_{jk} = \nabla^p\nabla_pT^i_{jk}.
\]
Using the Codazzi equation with the interchange of covariant derivative formula given above, we
obtain Simons' identity \cite[Lemma 2.1]{huisken86riemannian}:
\begin{align*}
\Delta A_{ij}
 &=  \nabla_{ij}H
   + HA_{ir}A^r_j
   - A_{ij}|A|^2
   + \nabla_q\RMn_{0ij}{}^q
   + \nabla_i\RMn_{0q}{}^q{}_{j}
   + \RMn_{iq}{}^{q}{}_{r}A^r_j
   + \RMn_{iqjr}A^{qr}
\end{align*}
or in $*$-notation
\begin{equation}
\Delta A = \nabla_{(2)}H + A*A*A + \nabla\Rcn(\nu,\cdot) + \Rcn*A.
\label{EQsimonsidentity}
\end{equation}

In most of our integral estimates, we include a function $\gamma:\Sigma\rightarrow\R$ in the integrand.
Eventually, this will be specialised to a smooth cutoff function between concentric geodesic balls on $\Sigma$.
For now however, let us only assume that $\gamma = \tilde{\gamma}\circ f$, where
\begin{equation}
\tag{$\gamma$}
0\le\tilde{\gamma}\le 1,\qquad\text{ and }\qquad
\vn{\tilde{\gamma}}_{C^2(N)} \le c_{\tilde{\gamma}} < \infty.
\end{equation}
Using the chain rule, this implies $D\gamma = (D\tilde{\gamma}\circ f)Df$ and then
$D^2\gamma = (D^2\tilde{\gamma}\circ f)(Df,Df) + (D\tilde{\gamma}\circ f)D^2f(\cdot,\cdot)$.
Thus there exists a constant $\cg=\cg(c_{\tilde{\gamma}}) \in \R$ such that 
\begin{equation}
\tag{$\gamma$}
\label{EQgamma}
|\nabla\gamma| \le \cg,\qquad\text{ and }\qquad
|\nabla_{(2)}\gamma| \le \cg(\cg+|A|).
\end{equation}
When we write ``for a function $\gamma:\Sigma\rightarrow\R$ as in \eqref{EQgamma}'' we mean a function
$\gamma:\Sigma\rightarrow\R$ as above, satisfying all conditions labeled \eqref{EQgamma}, which additionally
achieves the values zero and one in at least two points on $\Sigma$.

We note that if $\tilde{\gamma}$ is a cutoff function on a geodesic ball in $N$ of radius $\rho$, then we may
choose $\cg = \frac{c}\rho$ where $c$ is a universal constant and we have used that $c_{\tilde{\gamma}}=c_{\tilde{\gamma}}(\rho)$. This also explains our choice of scaling for the bound in \eqref{EQgamma}.


\section{Evolution equations for integrals of curvature.}

To begin, we state the following elementary evolution equations, whose proof is standard.

\begin{lem}
\label{LMevolutionequations}
For $f:\Sigma\times[0,T)\rightarrow N$ evolving by $\partial_tf = -\BF\nu$ the following equations hold:
\begin{align*}
  \pD{}{t}g_{ij} &= 2\BF A_{ij}\,,\qquad
  \pD{}{t}g^{ij}  = -2\BF A^{ij}\,,\\
  \pD{}{t}\nu &= \nabla \BF \,,\qquad
  \pD{}{t}d\mu  = H\BF \,d\mu\,,\\
  \pD{}{t}A_{ij} &= -\nabla_{ij}\BF  + A_{ik}A^k_j\BF  - \BF \RMn_{0i0j}\,.
\end{align*}
\end{lem}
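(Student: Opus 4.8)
The plan is to differentiate each of the listed quantities in $t$ and simplify using three standard facts. Writing $\partial_i$ for $\partial/\partial x_i$: the coordinate fields $\partial_t,\partial_i$ on $\Sigma\times[0,T)$ commute and are $f$-related to $\partial_tf,\partial_if$, so torsion-freeness of $\nablan$ gives $\nablan_{\partial_t}\partial_if=\nablan_{\partial_i}\partial_tf$; the connection $\nablan$ is compatible with $\nIP{\cdot}{\cdot}$; and the Gauss--Weingarten relations $\nablan_{\partial_j}\partial_if=\Gamma_{ij}^k\partial_kf+A_{ij}\nu$ and $\nablan_{\partial_i}\nu=-A_i^k\partial_kf$ hold, the second coming from differentiating $\nIP{\nu}{\partial_jf}=0$ and $\nIP{\nu}{\nu}=1$, with the sign fixed by the convention \eqref{EQsff}.

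The four simpler identities are then one-liners. For the metric, $\pD{}{t}g_{ij}=\nIP{\nablan_{\partial_t}\partial_if}{\partial_jf}+\nIP{\partial_if}{\nablan_{\partial_t}\partial_jf}$, and since $\nablan_{\partial_t}\partial_if=\nablan_{\partial_i}(-\BF\nu)=-(\partial_i\BF)\nu-\BF\nablan_{\partial_i}\nu$, only the Weingarten part survives pairing with a tangent vector, giving $\pD{}{t}g_{ij}=\BF A_{ij}+\BF A_{ji}=2\BF A_{ij}$. Differentiating $g^{ik}g_{kj}=\delta_j^i$ yields $\pD{}{t}g^{ij}=-2\BF A^{ij}$, and from $d\mu=\sqrt{\det g}\,dx$ one gets $\pD{}{t}d\mu=\tfrac12 g^{ij}(\pD{}{t}g_{ij})\,d\mu=H\BF\,d\mu$. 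For the normal, $\nIP{\nablan_{\partial_t}\nu}{\nu}=0$ forces $\nablan_{\partial_t}\nu$ to be tangent to $\Sigma$, and pairing with $\partial_if$ while using $\nIP{\nu}{\nablan_{\partial_i}\partial_tf}=-\partial_i\BF$ gives $\nIP{\nablan_{\partial_t}\nu}{\partial_if}=\partial_i\BF$, i.e.\ $\pD{}{t}\nu=\nabla\BF$.

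The substantive computation is the one for $A_{ij}$. I would carry it out at a point $p$ in the orthonormal frame and normal coordinates fixed in Section~2, where the induced Christoffel symbols vanish and $\nabla_{ij}\BF=\partial_i\partial_j\BF$; since every term in the claimed identity is a component of a $(0,2)$-tensor on $\Sigma$, establishing it there for each time suffices. Differentiating $A_{ij}=\nIP{\nablan_{\partial_j}\partial_if}{\nu}$ gives
\[\pD{}{t}A_{ij}=\nIP{\nablan_{\partial_t}\nablan_{\partial_j}\partial_if}{\nu}+\nIP{\nablan_{\partial_j}\partial_if}{\nablan_{\partial_t}\nu}\,,\]
and the second term vanishes at $p$ because $\nablan_{\partial_j}\partial_if=A_{ij}\nu$ is normal there while $\nablan_{\partial_t}\nu=\nabla\BF$ is tangential. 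In the first term I would commute the two derivatives to bring in the ambient curvature, $\nablan_{\partial_t}\nablan_{\partial_j}\partial_if=\nablan_{\partial_j}\nablan_{\partial_t}\partial_if+\RMn(\partial_tf,\partial_jf)\partial_if$, then substitute $\nablan_{\partial_t}\partial_if=\nablan_{\partial_i}\partial_tf=-(\partial_i\BF)\nu+\BF A_i^k\partial_kf$, expand $\nablan_{\partial_j}$ of this with Gauss--Weingarten, and take the $\nu$-component at $p$: the first summand contributes $-\partial_i\partial_j\BF+\BF A_{ik}A^k_j$, while the curvature term contributes $-\BF\nIP{\RMn(\nu,\partial_jf)\partial_if}{\nu}=-\BF\RMn_{0i0j}$ after invoking the symmetries of $\RMn$. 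Assembling the pieces gives $\pD{}{t}A_{ij}=-\nabla_{ij}\BF+\BF A_{ik}A^k_j-\BF\RMn_{0i0j}$.

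I expect the only genuinely delicate point to be bookkeeping the curvature term. The commutation is immediate from $[\partial_t,\partial_j]=0$, but reducing $\nIP{\RMn(\nu,\partial_jf)\partial_if}{\nu}$ to $\RMn_{0i0j}$ with the correct sign requires the pair- and antisymmetry identities of $\RMn$, used consistently with the index convention $\RMn_{ijkl}{\overline{g}}^{lm}\partial_m=(\nablan_{ij}-\nablan_{ji})\partial_k$ and $e_0=\nu$ of Section~2 and with the sign of the Weingarten relation dictated by \eqref{EQsff}. Localising at a point where the $\Gamma_{ij}^k$ vanish is what lets the Hessian $\nabla_{ij}\BF$ appear cleanly rather than a bare coordinate second derivative carrying a leftover Christoffel term, which is the reason the calculation is best done in the frame and coordinates fixed in Section~2.
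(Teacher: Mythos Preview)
Your proposal is correct and is exactly the standard computation the paper alludes to; the paper itself does not give a proof, stating only that ``the proof is standard,'' so there is nothing to compare against beyond noting that your argument is the expected one. Your flag that the only delicate point is pinning down the sign of the ambient curvature term is apt: with the paper's convention $\RMn_{ijkl}\,\overline{g}^{lm}\partial_m=(\nablan_{ij}-\nablan_{ji})\partial_k$ one has $\nIP{\RMn(e_0,e_j)e_i}{e_0}=\RMn_{0ji0}$, and reducing this to $\RMn_{0i0j}$ with the correct sign does require the symmetries you mention, so be careful to carry them through explicitly rather than asserting the final identity.
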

Using $\BF = \BWv = \Delta H + H|A^o|^2 + H\Rcn(\nu,\nu)$, the $P$-notation introduced in the previous section, and the formula
\begin{align*}
\RMn_{ijkl} =
    \Rcn(e_i,e_l)\nIP{e_j}{e_k}
  - \Rcn(e_i,e_k)\nIP{e_j}{e_l}
 &- \Rcn(e_j,e_l)\nIP{e_i}{e_k}
  + \Rcn(e_j,e_k)\nIP{e_i}{e_l}
\\
 &- \frac12\Scn\Big(\nIP{e_i}{e_l}\nIP{e_j}{e_k} - \nIP{e_i}{e_k}\nIP{e_j}{e_l}\Big)\,,
\end{align*}
we write the evolution of the second fundamental form as
\begin{align*}
  \pD{}{t}A_{ij} = &- \nabla_{ij}\Delta H + \big(P_3^2 + P_5^0\big)(A) + \big(P_{1,1}^2 + P_{3,1}^0 + P_{1,2}^0+P_{2,1}^1\big)(A,\Rcn) \,.
\end{align*}
Interchanging covariant derivatives and applying \eqref{EQsimonsidentity} then gives the following lemma.

\begin{lem}
\label{LMevoforA}
For $f:\Sigma\times[0,T)\rightarrow N$ evolving by \eqref{EQwf} the following equation holds:
\begin{align*}
  \pD{}{t}A_{ij} = &- \Delta^2 A_{ij} + \big(P_3^2 + P_5^0 \big)(A) + \big(P_{1,1}^2 + P_{3,1}^0 + P_{1,2}^0 + P_{2,1}^1\big)(A,\Rcn)+ \Delta\nabla \Rcn\,.
\end{align*}
\end{lem}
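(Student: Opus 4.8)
The plan is to start from the preliminary evolution equation for $A_{ij}$ already derived just above the statement, namely
\[
  \pD{}{t}A_{ij} = - \nabla_{ij}\Delta H + \big(P_3^2 + P_5^0\big)(A) + \big(P_{1,1}^2 + P_{3,1}^0 + P_{1,2}^0+P_{2,1}^1\big)(A,\Rcn)\,,
\]
and to rewrite the leading term $-\nabla_{ij}\Delta H$ as $-\Delta^2 A_{ij}$ modulo lower-order terms of the advertised type. The two tools for this are Simons' identity \eqref{EQsimonsidentity}, which trades a second covariant derivative of $H$ for $\Delta A$ plus curvature corrections, and the interchange-of-covariant-derivatives formulae recorded in the Notation section, which produce a $\RMn * (\cdot)$ or $A*A*(\cdot)$ error each time two derivatives are commuted. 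The bar-curvature terms $\RMn$ must then be expanded via the 3-manifold identity expressing $\RMn$ in terms of $\Rcn$ and $\Scn$ (the same identity used to pass from Lemma~\ref{LMevolutionequations} to the preliminary formula), so that every error is a $P$-term in $A$ and $\Rcn$, or a pure covariant derivative of $\Rcn$.

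Concretely, I would proceed as follows. First, apply $\nabla_{ij}$ to Simons' identity solved for $\nabla_{(2)}H$: schematically $\nabla_{(2)}H = \Delta A + A*A*A + \nabla\Rcn(\nu,\cdot) + \Rcn*A$, so $\nabla_{ij}\Delta H = \Delta(\nabla_{ij}H) + [\text{commutators}]$, and then $\Delta \nabla_{ij} H = \Delta\big(\Delta A_{ij} + (A*A*A + \Rcn*A)_{ij} + (\nabla\Rcn(\nu,\cdot))_{ij}\big) + [\text{commutators}]$. Tracking degrees: $\Delta^2 A$ is the principal term; $\Delta(A*A*A)$ contributes $P_3^2(A)$; $\Delta(\Rcn*A)$ contributes $P_{1,1}^2(A,\Rcn)$ together with terms where derivatives fall on $\Rcn$, which I absorb into $\Delta\nabla\Rcn$ plus $P_{*,*}^*(A,\Rcn)$; and $\Delta\big(\nabla\Rcn(\nu,\cdot)\big)$, after using $\pD{}{t}\nu = \nabla\BWv$ is irrelevant here but $\nabla\nu = A$ in the ambient sense, expands into $\Delta\nabla\Rcn$ (the explicit term in the statement) plus terms of the form $\nabla_{(k)}\Rcn * \nabla_{(l)}A$ with total order $\le$ the right count, i.e. $P_{1,1}^2(A,\Rcn)$ and $P_{2,1}^1(A,\Rcn)$. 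Second, bookkeep every commutator term: each interchange of $\nabla$'s on a tensor of the relevant order produces either $\RMn*(\text{tensor})$ or $A*A*(\text{tensor})$, and since we commute past at most a fixed number of derivatives the worst contributions are exactly $P_5^0(A)$, $P_{3,1}^0(A,\Rcn)$, $P_{1,2}^0(A,\Rcn)$ and $P_{2,1}^1(A,\Rcn)$ after expanding $\RMn$ in terms of $\Rcn,\Scn$ and noting $\Scn$ is itself a contraction $g^{il}g^{jk}\RMn_{ijkl}$, hence absorbed into $\Rcn$-type data via the Gauss equations. Collecting all contributions gives precisely
\[
  \pD{}{t}A_{ij} = - \Delta^2 A_{ij} + \big(P_3^2 + P_5^0 \big)(A) + \big(P_{1,1}^2 + P_{3,1}^0 + P_{1,2}^0 + P_{2,1}^1\big)(A,\Rcn)+ \Delta\nabla \Rcn\,.
\]

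The main obstacle is not any single identity but the systematic bookkeeping: one must verify that \emph{every} error term generated — from applying $\Delta\nabla_{ij}$ to each summand of Simons' identity, from the two layers of derivative-commutation, and from expanding the ambient Riemann tensor into Ricci and scalar curvature — lands in one of the four allowed $P$-classes with the stated number of factors and total order, with no term of higher order in $A$ or in derivatives of $\Rcn$ escaping. The scaling/degree heuristic (each $\nabla$ carries weight $1$, each factor of $A$ weight $1$, $\Delta^2A$ has weight $5$) makes this tractable: one checks that all genuine error terms have weight $\le 5$ in $A$ with at most five $A$-factors, or the appropriate mixed weight in $(A,\Rcn)$, and anything with $\nabla_{(k)}\Rcn$ for $k\ge 2$ not multiplied by $A$ is swept into the explicit $\Delta\nabla\Rcn$ term. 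Since the constants are absolute (they depend only on the combinatorics of the contractions, not on the geometry), the $*$- and $P$-notation lets us suppress the exact coefficients, and the lemma follows.
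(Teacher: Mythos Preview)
Your proposal is correct and follows exactly the route the paper takes: the paper's entire proof is the single sentence ``Interchanging covariant derivatives and applying \eqref{EQsimonsidentity} then gives the following lemma,'' and your write-up is a careful expansion of precisely that computation---commute $\nabla_{ij}$ past $\Delta$, substitute Simons' identity for $\nabla_{ij}H$, apply $\Delta$ termwise, and bookkeep the errors into the stated $P$-classes using the Gauss equation and the three-manifold $\RMn$-to-$\Rcn$ identity. There is nothing to add.
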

Let us define $\SK_i$ to be the bound for the $i$-th ambient derivative of the Ricci curvature of $(N,\nIP{\cdot}{\cdot})$:
\begin{equation}
\label{EQambientcurvboundsdefn}
\SK_i :=
\sup_{p\in N,\,X,Y\in T_pN} \Big|\overline{\nabla_{(i)}\Rc}(X,Y)(p)\Big| < \infty\,.
\end{equation}
We now establish a basic energy estimate for \eqref{EQwf}.
In estimates such as these, integral quantities are evaluated at each immersion $f(\cdot,t)$ for
$t\in[0,T)$. This $t$-dependence is not typically noted, unless possibly ambiguous or when integrals
are evaluated at different times. This situation arises when integrating estimates such as Lemma
\ref{LMenergybasic} below.
\begin{lem}
Suppose $(N,\nIP{\cdot}{\cdot})$ is smooth and let $f:\Sigma\times[0,T)\rightarrow N$ be a solution of
\eqref{EQwf}.
For each $\delta>0$ there exists a constant $c\in(0,\infty)$ depending only on $\SK_i$ for $i\in\{0,1,2,3\}$, $\cg$ and $\delta$ such that the
following estimate holds:
\label{LMenergybasic}
\begin{align*}
\rD{}{t}&\int_\Sigma |A|^2\gamma^4d\mu
     + (2-\delta)\int_\Sigma |\nabla_{(2)}A|^2\gamma^4d\mu
\\&\le c\int_{[\gamma>0]}|A|^2d\mu
   + c\int_\Sigma|\overline{\nabla_{(3)}\Rc}|^2\,\gamma^4d\mu
   + c\int_\Sigma \big( |A|^6 + |\nabla A|^2|A|^2 + |\nabla A|^2\big)\gamma^4d\mu\,.
\end{align*}
\end{lem}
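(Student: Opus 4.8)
The plan is to differentiate $\int_\Sigma |A|^2\gamma^4\,d\mu$ in time using the evolution equations collected in Lemmas~\ref{LMevolutionequations} and~\ref{LMevoforA}, isolate the leading-order dissipative term, and absorb everything else into the right-hand side. First I would write
\[
\rD{}{t}\int_\Sigma |A|^2\gamma^4\,d\mu
 = \int_\Sigma \Big( 2\,g^{ik}g^{jl}A_{kl}\,\pD{}{t}A_{ij} + (\pD{}{t}g^{ik})g^{jl}A_{ij}A_{kl} + (\pD{}{t}g^{jl})g^{ik}A_{ij}A_{kl} + |A|^2 H\BF \Big)\gamma^4\,d\mu\,,
\]
noting that $\gamma^4$ is fixed in time since $\gamma=\tilde\gamma\circ f$ and we are working along the flow (more precisely $\partial_t(\gamma^4)$ contributes a term controlled by $|\nabla\tilde\gamma|\,|\BF|$, which is lower order and handled the same way as the others). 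The metric-variation terms and the $|A|^2H\BF$ term are, by Lemma~\ref{LMevolutionequations} and $\BF=\BWv=\Delta H + H|A^o|^2 + H\Rcn(\nu,\nu)$, of the schematic form $P_3^2(A) + P_5^0(A) + P$-terms in $(A,\Rcn)$ times $\gamma^4$, hence already of the type appearing on the right-hand side after the integrations by parts described below.

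The main term is $2\int_\Sigma \langle A, \partial_t A\rangle\gamma^4\,d\mu$. Substituting Lemma~\ref{LMevoforA} gives a leading contribution $-2\int_\Sigma \langle A, \Delta^2 A\rangle\gamma^4\,d\mu$ together with lower-order $P$-terms. The heart of the estimate is integrating by parts twice to convert $-2\int_\Sigma \langle A,\Delta^2 A\rangle\gamma^4\,d\mu$ into $-2\int_\Sigma |\nabla_{(2)}A|^2\gamma^4\,d\mu$ plus error terms in which derivatives fall on $\gamma^4$. Each such integration by parts produces factors of $\nabla(\gamma^4) = 4\gamma^3\nabla\gamma$ and $\nabla_{(2)}(\gamma^4)$, which by \eqref{EQgamma} are bounded by $c\gamma^3$ and $c\gamma^2(\cg+|A|)$ respectively. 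The resulting error terms therefore look like $\int_\Sigma |\nabla_{(3)}A||\nabla A|\gamma^3\,d\mu$, $\int_\Sigma |\nabla_{(2)}A|^2\gamma^2(1+|A|)\gamma^2\,d\mu$, and similar; these I would control by Cauchy--Schwarz and Young's inequality, peeling off a small multiple $\delta\int_\Sigma|\nabla_{(2)}A|^2\gamma^4\,d\mu$ of the good term on each application, at the cost of constants depending on $\cg$ and $\delta$, and generating terms of the form $\int_{[\gamma>0]}|A|^2\,d\mu$ and $\int_\Sigma(|\nabla A|^2 + |\nabla A|^2|A|^2 + |A|^6)\gamma^4\,d\mu$ on the right. (Here one uses that $\gamma\le 1$ to replace $\gamma^2$ by $1$ where convenient and to keep the weight at $\gamma^4$ on the retained good term; interpolation-type manipulations for the interior derivative $\nabla_{(3)}A$ against $\nabla_{(2)}A$ with one $\gamma$-weighted integration by parts are standard.)

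It remains to treat the lower-order $P$-terms from Lemma~\ref{LMevoforA}: $\langle A, P_3^2(A)+P_5^0(A)\rangle\gamma^4$, $\langle A, (P_{1,1}^2 + P_{3,1}^0 + P_{1,2}^0 + P_{2,1}^1)(A,\Rcn)\rangle\gamma^4$, and $\langle A, \Delta\nabla\Rcn\rangle\gamma^4$. The pure-$A$ terms $P_3^2(A)$ and $P_5^0(A)$ expand, after counting derivatives, into combinations of $|\nabla_{(2)}A||\nabla A||A|$, $|\nabla_{(2)}A||A|^3$ (or $|\nabla A|^2|A|^2$, $|A|^6$ type terms); any term carrying two derivatives $\nabla_{(2)}A$ is split by Young against $\delta|\nabla_{(2)}A|^2\gamma^4$, leaving $|\nabla A|^2|A|^2$, $|A|^6$, and $|\nabla A|^2$ pieces as in the statement. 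The mixed terms with $\Rcn$ are bounded using $\SK_0,\dots,\SK_3$ from \eqref{EQambientcurvboundsdefn} to estimate $\Rcn$-factors, again applying Young where needed; terms with no curvature of $A$ and low total degree reduce to $\int_{[\gamma>0]}|A|^2\,d\mu$ (absorbing a bounded power of $|A|$ using smallness is not available here, so one keeps $|A|^6$ and $|\nabla A|^2|A|^2$ explicitly, matching the statement). Finally $\langle A, \Delta\nabla\Rcn\rangle\gamma^4$ is integrated by parts once to move a derivative off $\Rcn$ and then estimated by Young to give $c\int_\Sigma|\overline{\nabla_{(3)}\Rc}|^2\gamma^4\,d\mu$ plus a $\delta$-fraction of the good term plus $\int_{[\gamma>0]}|A|^2\,d\mu$; alternatively integrate by parts twice onto $A$ and pair $\nabla_{(2)}A$ against $\nabla\Rcn$. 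Collecting all contributions yields the claimed inequality with $c=c(\SK_0,\SK_1,\SK_2,\SK_3,\cg,\delta)$. The step I expect to be the main obstacle is the careful bookkeeping of the $\gamma$-weighted integrations by parts on the $\Delta^2 A$ term: one must ensure every error term retains enough powers of $\gamma$ (at least $\gamma^4$ on any term that cannot be dominated by $\int_{[\gamma>0]}|A|^2\,d\mu$) so that the inequality closes in the stated weighted form, and that the factor $2-\delta$ — rather than a smaller constant — survives after all absorptions.
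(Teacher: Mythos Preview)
Your proposal follows the same route as the paper: differentiate using Lemma~\ref{LMevoforA}, integrate the $\Delta^2 A$ term by parts twice to extract $-2\int_\Sigma|\nabla_{(2)}A|^2\gamma^4\,d\mu$, and absorb the remaining terms with weighted Young inequalities. The outline and the identification of the main difficulty (the $\gamma$-bookkeeping) are correct.

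One point you gloss over that the paper handles explicitly, and where your sketch actually goes slightly wrong: the term $\Delta\nabla\Rcn$ in Lemma~\ref{LMevoforA} carries \emph{intrinsic} derivatives of $\Sigma$, whereas the bounds $\SK_i$ and the term $\int_\Sigma|\overline{\nabla_{(3)}\Rc}|^2\gamma^4\,d\mu$ in the statement refer to \emph{ambient} derivatives. The paper first writes down the conversion
\[
|\nabla_{(3)}\Rcn| \le c|\overline{\nabla_{(3)}\Rc}| + c|\overline{\nabla_{(2)}\Rc}|\,|A| + c|\nablan\Rcn|(|A|^2+|\nabla A|) + c|\Rcn|(|A|^3+|\nabla A|\,|A|+|\nabla_{(2)}A|)\,,
\]
pairs $A$ \emph{directly} against $\nabla_{(3)}\Rcn$ (no integration by parts on this term), and splits $|A|\cdot|\overline{\nabla_{(3)}\Rc}|\le |A|^2+|\overline{\nabla_{(3)}\Rc}|^2$. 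Your suggestion to integrate by parts once or twice on $\langle A,\Delta\nabla\Rcn\rangle$ would, after the same conversion, leave a bare constant $\SK_2^2$ or $\SK_1^2$ multiplied by $\int_\Sigma\gamma^4\,d\mu$, an area term that does not appear on the right-hand side of the statement. So for this piece the correct move is not to integrate by parts; the conversion errors are then absorbed using the $\SK_i$ for $i\le 2$ into the $|A|^2$, $|A|^6$, $|\nabla A|^2$, and $\delta|\nabla_{(2)}A|^2$ buckets you already have.
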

\begin{proof}
We first compute
\begin{align*}
-\int_\Sigma &\sIP{A}{\Delta^2A}\gamma^4\,d\mu
   =  -\int_\Sigma A^{ij}\nabla_p\Delta\nabla^pA_{ij}\gamma^4d\mu
   + \int_\Sigma A*\nabla(R*\nabla A)\gamma^4d\mu
\\&\le  -\int_\Sigma |\nabla_{(2)} A|^2\gamma^4d\mu
     -8\int_\Sigma A^{ij}(\nabla_q\nabla^p A_{ij})\big[(\nabla_p\nabla^q\gamma) \gamma
                                                       + 3(\nabla_p\gamma)(\nabla^q\gamma)\big]
                   \gamma^2d\mu
\\*&\quad
     -16\int_\Sigma (\nabla_pA^{ij})(\nabla_q\nabla^p A_{ij}) (\nabla^q\gamma) \gamma^3d\mu
\\*&\quad
       + \int_\Sigma \big(\nabla \Rcn * P_2^1(A) + \Rcn * P_2^2(A) + \Rcn * A * A * \nabla A\big)\gamma^4d\mu\,.
\end{align*}
We wish to now convert each derivative $\nabla$ acting on $\Rcn$ to $\nablan$
plus some error involving curvature.
A straightforward computation gives
\[
\nabla_{(k)}\Rcn
 = \overline{\nabla\Rc}
   + \sum_{l=0}^{k-1}\sum_{i+j=k-l} \overline{\nabla_{(l)}\Rc}*P_i^j(A)
\]
so that
\begin{align*}
|\nabla \Rcn|
 &\le c|\nablan\Rcn| + c|\Rcn|\,|A|\,,
\\
|\nabla_{(2)} \Rcn|
 &\le c|\overline{\nabla_{(2)}\Rc}| + c|\nablan\Rcn|\,|A|
    + c|\Rcn|\big(|A|^2 + |\nabla A|\big)\,,\text{ and}
\\
|\nabla_{(3)}\Rcn|
 &\le c|\overline{\nabla_{(3)}\Rc}| + c|\overline{\nabla_{(2)}\Rc}|\,|A|
    + c|\nablan\Rcn|\big(|A|^2 + |\nabla A|\big)
\\
 &\quad
    + c|\Rcn|\big(|A|^3 + |\nabla A|\,|A| + |\nabla_{(2)}A|\big)\,.
\end{align*}
For a differentiable function $\eta:\Sigma\rightarrow\R$ on $\Sigma$, the
evolution of $\int_\Sigma\eta\,d\mu$ under \eqref{EQwf} is
\[
\rD{}{t}\int_\Sigma \eta\,d\mu
 = \int_\Sigma \partial_t\eta\,d\mu
  + \int_\Sigma \eta H(\Delta H + H|A^o|^2 + H\Rcn(\nu,\nu))\,d\mu\,.
\]
Using the above formulae, Lemma \ref{LMevolutionequations} and Lemma \ref{LMevoforA} we compute
\begin{align*}
\rD{}{t}&\int_\Sigma |A|^2\gamma^4d\mu
\\*
 &= 2\int_\Sigma \sIP{A}{\partial_tA}\gamma^4d\mu
  + 2\int_\Sigma (\partial_tg^{ik})g^{jl}A_{ij}A_{kl}\gamma^4d\mu
  + 4\int_\Sigma |A|^2\gamma^3\partial_t\gamma\,d\mu
\\&\quad
  + \int_\Sigma H|A|^2(\Delta H + H|A^o|^2 + H\Rcn(\nu,\nu) )\gamma^4\,d\mu
\\
 &= 2\int_\Sigma \sIP{A}{- \Delta^2 A}\gamma^4d\mu
  + \int_\Sigma A*\Big(\big(P_3^2 + P_5^0 \big)(A)
                \Big)\gamma^4d\mu
\\&\quad
  + \int_\Sigma A*\Big(
                     \big(P_{1,2}^0+P_{2,1}^1 + P_{1,1}^2 + P_{3,1}^0\big)(A,\Rcn)
                   + \Delta\nabla \Rcn\Big)
                         \gamma^4d\mu
\\&\quad
  - 4\int_\Sigma (\Delta H + H|A^o|^2 + H\Rcn(\nu,\nu) )A^{ik}A_{ij}A_k^j\,\gamma^4d\mu
\\&\quad
  - 4\int_\Sigma |A|^2\gamma^3(D_\nu\tilde{\gamma}|_f)(\Delta H + H|A^o|^2 + H\Rcn(\nu,\nu))\,d\mu
\\&\quad
  + \int_\Sigma H|A|^2(\Delta H + H|A^o|^2 + H\Rcn(\nu,\nu))\gamma^4\,d\mu
\\
 &\le
     -2\int_\Sigma |\nabla_{(2)}A|^2\gamma^4d\mu
        + \delta\int_\Sigma |\nabla_{(2)} A|^2\gamma^4d\mu
\\&\quad
        + c(\cg)^2\int_\Sigma |\nabla A|^2\gamma^4d\mu
        + c\int_\Sigma |A|^6\gamma^4d\mu
        + c(\cg)^4\int_\Sigma |A|^2\gamma^4d\mu
\\&\quad
     -8\int_\Sigma A^{ij}(\nabla_q\nabla^p A_{ij})\big[(\nabla_p\nabla^q\gamma) \gamma
                                                       + 3(\nabla_p\gamma)(\nabla^q\gamma)\big]
                   \gamma^2d\mu
\\&\quad
     -16\int_\Sigma (\nabla_pA^{ij})(\nabla_q\nabla^p A_{ij}) (\nabla^q\gamma) \gamma^3d\mu
  + \int_\Sigma \big(P_4^2 + P_6^0 \big)(A)
                \gamma^4d\mu
\\&\quad
  + \int_\Sigma \big(\big(P_{3,1}^1 + P_{2,2}^0
                   + P_{2,1}^2 + P_{4,1}^0\big)(A,\Rcn)\big)\gamma^4d\mu
\\&\quad
  + c\int_\Sigma |A|\big(
      |\overline{\nabla_{(3)}\Rc}| + |\overline{\nabla_{(2)}\Rc}|\,|A|
    + |\nablan\Rcn|\big(|A|^2 + |\nabla A|\big)
\\
 &\qquad\qquad
    + |\Rcn|\big(|A|^3 + |\nabla A|\,|A| + |\nabla_{(2)}A|\big)\big)
                         \gamma^4d\mu
\\&\quad
  + \int_\Sigma \big(P_4^2(A) + P_6^0(A) + P_{4,1}^0(A,\Rcn)\big)\,\gamma^4d\mu
\\&\quad
  + c(\cg)\int_\Sigma \big(P_3^2(A) + P_5^0(A) + P_{3,1}^0(A,\Rcn) \big)\,\gamma^3d\mu
\\
 &\le
     -(2-\delta)\int_\Sigma |\nabla_{(2)}A|^2\gamma^4d\mu
     + c\big[(\cg)^4+(\cg)^2\big]\int_{[\gamma>0]}|A|^2d\mu
\\&\quad
     + c(\cg)^2\int_\Sigma \big(|A|^4 + |\nabla A|^2\big)\gamma^2d\mu
                    + c(\cg)\int_\Sigma \big(P_{3,1}^0(A,\Rcn) \big)\,\gamma^3d\mu
\\&\quad
  + \int_\Sigma \big(P_4^2 + P_6^0\big)(A)
                \gamma^4d\mu
  + c(\cg)\int_\Sigma \big(P_3^2 + P_5^0\big)(A)\,\gamma^3d\mu
\\&\quad
  + \int_\Sigma \big(\big(P_{3,1}^1 + P_{2,2}^0
                   + P_{2,1}^2 + P_{4,1}^0\big)(A,\Rcn)\big)\gamma^4d\mu
\\&\quad
  + c\int_\Sigma |A|\big(
      |\overline{\nabla_{(3)}\Rc}| + |\overline{\nabla_{(2)}\Rc}|\,|A|
    + |\nablan\Rcn|\big(|A|^2 + |\nabla A|\big)
\\
 &\qquad\qquad
    + |\Rcn|\big(|A|^3 + |\nabla A|\,|A| + |\nabla_{(2)}A|\big)\big)
                         \gamma^4d\mu
\\
 &\le
     -(2-\delta)\int_\Sigma |\nabla_{(2)}A|^2\gamma^4d\mu
     + c\big[(\cg)^4+(\cg)^2\big]\int_{[\gamma>0]}|A|^2d\mu
\\&\quad
     +c(\cg)^2\int_\Sigma |\nabla A|^2\gamma^2d\mu
     +c\int_\Sigma \big( |A|^6 + |\nabla A|^2|A|^2\big)\gamma^4d\mu
\\&\quad
  + c(\cg)\int_\Sigma \big(|\nabla_{(2)}A|\,|A|^2 + |\nabla A|^2|A| + |A|^5 + |A|^3|\Rcn|
                      \big)\,\gamma^3d\mu
\\&\quad
  + c \int_\Sigma \big(
                  |A|^2|\Rcn|^2 + |\nabla A|^2|\Rcn|
		+ |\nabla A|\,|A|\,|\nablan\Rcn|
		+ |A|^2|\overline{\nabla_{(2)}\Rc}|
\\&\qquad\qquad
                 + |A|^3|\nablan\Rcn|
                 + |\nabla A|\,|A|^2|\Rcn| + |A|^4|\Rcn|
    + |A|\,|\overline{\nabla_{(3)}\Rc}|               \big)\gamma^4d\mu
\\
 &\le
     -(2-\delta)\int_\Sigma |\nabla_{(2)}A|^2\gamma^4d\mu
     + c\big[(\cg)^4+(\cg)^2\big]\int_{[\gamma>0]}|A|^2d\mu
\\&\quad
     +c(\cg)^2\int_\Sigma |\nabla A|^2\gamma^2d\mu
     +c\int_\Sigma \big( |A|^6 + |\nabla A|^2|A|^2\big)\gamma^4d\mu
\\&\quad
  + c \int_\Sigma \big(
                  |A|^2|\Rcn|^2 + |\nabla A|^2|\Rcn|
		+ |\nabla A|\,|A|\,|\nablan\Rcn|
		+ |A|^2|\overline{\nabla_{(2)}\Rc}|
\\&\qquad\qquad
                 + |A|^3|\nablan\Rcn|
                 + |\nabla A|\,|A|^2|\Rcn| + |A|^4|\Rcn|
    + |A|\,|\overline{\nabla_{(3)}\Rc}|
                \big)\gamma^4d\mu
\\
 &\le
     -(2-\delta)\int_\Sigma |\nabla_{(2)}A|^2\gamma^4d\mu
     + c\big[(\cg)^4+1\big]\int_{[\gamma>0]}|A|^2d\mu
\\&\quad
     +c(\cg)^2\int_\Sigma |\nabla A|^2\gamma^2d\mu
     +c\int_\Sigma \big( |A|^6 + |\nabla A|^2|A|^2 + |\nabla A|^2 + |\overline{\nabla_{(3)}\Rc}|^2\big)\gamma^4d\mu\,.
\end{align*}
Estimating
\[
(\cg)^2\int_\Sigma |\nabla A|^2\gamma^2d\mu
 \le  \delta\int_\Sigma |\nabla_{(2)}A|^2\gamma^4d\mu + c(\cg)^4\int_{[\gamma>0]}|A|^2d\mu
\]
finishes the proof.
\end{proof}

\begin{lem}
\label{LMevolutionequationhigher}
For $f:\Sigma\times[0,T)\rightarrow N$ evolving by \eqref{EQwf} the following equation holds:
\begin{align*}
 \pD{}{t}\nabla_{(k)}A_{ij}
&= -\Delta^2\nabla_{(k)}A + \big(P_3^{k+2} + P_5^k\big)(A) + \overline{\nabla_{(3+k)}\Rc}
\\&\quad
  + \sum_{i+j+p=k+3} \Big(  \overline{\nabla_{(p)}\Rc} * P_j^i(A)\Big)
\\&\quad
  + \sum_{i+j+p=k+1} \Big(  \overline{\nabla_{(p)}(\Rc*\Rc)} * P_j^i(A)\Big)
\,.
\end{align*}
\end{lem}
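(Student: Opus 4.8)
The plan is to argue by induction on $k$, the base case $k=0$ being a restatement of Lemma \ref{LMevoforA}. Starting from
\[
\pD{}{t}A_{ij} = -\Delta^2 A_{ij} + \big(P_3^2+P_5^0\big)(A) + \big(P_{1,1}^2+P_{3,1}^0+P_{1,2}^0+P_{2,1}^1\big)(A,\Rcn) + \Delta\nabla\Rcn,
\]
I would replace every occurrence of $\Rcn$ by ambient covariant derivatives of $\Rc$ using the identity expressing $\nabla_{(m)}\Rcn$ as $\overline{\nabla_{(m)}\Rc}$ plus a sum of terms $\overline{\nabla_{(l)}\Rc}*P_j^i(A)$ with $l+i+j=m$ (applied also to its square, for the quadratic term $P_{1,2}^0(A,\Rcn)=A*\Rcn*\Rcn$). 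Since this substitution preserves the total index $i+j+p$ of a term $\overline{\nabla_{(p)}\Rc}*P_j^i(A)$, and the linear-in-Ricci contributions (including $\Delta\nabla\Rcn$) all have total index $3$ while $P_{1,2}^0$ has total index $1$, matching total indices rewrites the right-hand side in precisely the claimed form at $k=0$.

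For the inductive step I would assume the identity at level $k$ and apply one covariant derivative, using
\[
\pD{}{t}\nabla_{(k+1)}A = \nabla\Big(\pD{}{t}\nabla_{(k)}A\Big) + \big[\partial_t,\nabla\big]\nabla_{(k)}A.
\]
The commutator is controlled through $\partial_t g_{ij}=2\BWv A_{ij}$ (Lemma \ref{LMevolutionequations}) and \eqref{C3Echristoffelmetric}, which give $\partial_t\Gamma$ schematically equal to $\nabla(\BWv*A)$; since $\BWv = \Delta H + H|A^o|^2 + H\Rcn(\nu,\nu)$ equals $(P_1^2+P_3^0)(A)+P_{1,1}^0(A,\Rcn)$, one obtains $\partial_t\Gamma = (P_2^3+P_4^1)(A)+P_{2,1}^1(A,\Rcn)$ and hence $[\partial_t,\nabla]\nabla_{(k)}A = \partial_t\Gamma*\nabla_{(k)}A = (P_3^{k+3}+P_5^{k+1})(A)+P_{3,1}^{k+1}(A,\Rcn)$, the last term converting (again by the $\nabla_{(m)}\Rcn$ identity) into $\sum_{i+j+p=k+4}\overline{\nabla_{(p)}\Rc}*P_j^i(A)$. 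For $\nabla(\partial_t\nabla_{(k)}A)$ I would differentiate the inductive hypothesis termwise. On $-\Delta^2\nabla_{(k)}A$ one commutes the new derivative through $\Delta^2$, producing $-\Delta^2\nabla_{(k+1)}A$ together with $\sum_{a+b=3}\nabla_{(a)}R*\nabla_{(k+b)}A$, where $R$ is the curvature of $\Sigma$; invoking Gauss' equation $R = \RMn + A*A$ and the displayed expression for $\RMn_{ijkl}$ in terms of $\Rcn$ and $\Scn$, this becomes $P_3^{k+3}(A)+\sum_{i+j+p=k+4}\overline{\nabla_{(p)}\Rc}*P_j^i(A)$. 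The remaining terms are differentiated by the Leibniz rule, using that the surface derivative of an ambient tensor restricted to $\Sigma$ equals its ambient derivative plus an $A*(\cdot)$-term; this raises every total index by one, sending $(P_3^{k+2}+P_5^k)(A)$ to $(P_3^{k+3}+P_5^{k+1})(A)$, $\overline{\nabla_{(k+3)}\Rc}$ to $\overline{\nabla_{(k+4)}\Rc}$ plus an index-$(k+4)$ correction, and the two sums to their $(k+1)$-analogues. Collecting all contributions yields the identity at level $k+1$.

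The substance of the argument is entirely the index bookkeeping, and the point I would watch most carefully is that no pure-curvature term of order higher than $P_3^{k+2}$ or $P_5^k$, and no term of Ricci-degree greater than two, is ever produced. The former is forced by the parabolic homogeneity of the $A$-only terms together with the fact that $\BWv$ is second order in $A$ — so that $\partial_t\Gamma$ is exactly third order, which is what makes $[\partial_t,\nabla]$ contribute $P_3^{k+3}$ and not more at level $k+1$; the latter holds because the curvature of $\Sigma$ is linear in the ambient Ricci and quadratic in $A$ by Gauss, and no step of the computation multiplies two Ricci-carrying factors, so the quadratic-in-Ricci term already present at level $k$ is merely differentiated. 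Once one checks that the commutations through $\Delta^2$ and $[\partial_t,\nabla]$ and the conversions of surface derivatives of $\Rcn$ into ambient derivatives of $\Rc$ all respect these constraints and the total-index count, the induction closes.
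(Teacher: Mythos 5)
Your proof is correct and follows essentially the same route the paper indicates: induction on $k$ with base case read off from Lemma~\ref{LMevoforA}, commutation of $\partial_t$ with $\nabla$ in the style of Kuwert--Sch\"atzle Lemma~2.3 (controlling $[\partial_t,\nabla]$ through $\partial_t\Gamma$ and the speed $\BWv$), and conversion of surface derivatives of $\Rcn$ into ambient derivatives of $\Rc$ with $A$-corrections, tracking that the total index of every Ricci-carrying term increases by exactly one at each step. The only blemish is the phrase ``no pure-curvature term of order higher than $P_3^{k+2}$ or $P_5^k$ is ever produced,'' which should refer to the level-$(k+1)$ bounds $P_3^{k+3}$ and $P_5^{k+1}$ as your own subsequent sentence makes clear.
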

\begin{proof}[Proof. (Sketch)]
We use an induction argument analogous to that found in \cite[Lemma 2.4]{kuwert2002gfw}.  Step one of induction for
$k=0$ can be easily obtained from the result of Lemma \ref{LMevoforA} and using the conversion from $\nabla$ to
$\nablan$ when acting on $\Rcn$ where the error terms involve the curvature and its derivatives, which we have already
computed in the proof of Lemma \ref{LMenergybasic}. To prove the induction step from $k$ to $k+1$ we use \cite[Lemma
2.3]{kuwert2002gfw} and again the conversion between the two types of derivative and absorb the extra terms arriving
from the different more complicated speed of the flow into the summations involving the Ricci curvature.
\end{proof}
\begin{lem}
\label{LMenergyhighervanillaconstantricci}
Suppose $(N,\nIP{\cdot}{\cdot})$ is smooth and let $f:\Sigma\times[0,T)\rightarrow N$ be a solution of \eqref{EQwf}.
For each $\delta>0$ there exists a constant $c\in(0,\infty)$ depending only on $\SK_i$ for $i=0,1,2$, $s$, $\cg$ and
$\delta$ such that the evolution of the concentration of $\nabla_{(k)}A$ in $L^2$ is estimated by
\begin{align*}
\rD{}{t}&\int_\Sigma|\nabla_{(k)}A|^2\gamma^sd\mu
    + (2-\delta)\int_\Sigma |\nabla_{(k+2)}A|^2 \gamma^sd\mu
\\*
&\le
    c\int_\Sigma |A|^2\gamma^{s-2k-4}d\mu
  + c\int_\Sigma \nabla_{(k)}A*\Big(P_3^{k+2}(A)+P_5^k(A)
  \Big)\,\gamma^sd\mu
\\&\hskip+1cm
  + c\int_\Sigma \nabla_{(k)}A*
    \bigg( \sum_{i+j+p=k+3} \Big(  \overline{\nabla_{(p)}\Rc} * P_j^i(A)\Big)
 \notag\\&\hskip+3cm
  + \sum_{i+j+p=k+1} \Big(  \overline{\nabla_{(p)}(\Rc*\Rc)} * P_j^i(A)\Big)
  \Big)\,\gamma^sd\mu
\,.
\end{align*}
\end{lem}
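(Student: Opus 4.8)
The plan is to differentiate $\int_\Sigma |\nabla_{(k)}A|^2\gamma^s\,d\mu$ in time using Lemma \ref{LMevolutionequationhigher} for the evolution of $\nabla_{(k)}A$ and Lemma \ref{LMevolutionequations} for the evolution of $g^{ij}$ and $d\mu$, exactly as in the model computation carried out in the proof of Lemma \ref{LMenergybasic} but now with a general weight $\gamma^s$ in place of $\gamma^4$ and with $\nabla_{(k)}A$ in place of $A$. The leading term is $-2\int_\Sigma \sIP{\nabla_{(k)}A}{\Delta^2\nabla_{(k)}A}\gamma^s\,d\mu$. First I would integrate this by parts twice to produce the good term $-2\int_\Sigma |\nabla_{(k+2)}A|^2\gamma^s\,d\mu$ together with error terms in which at least one derivative falls on the weight $\gamma^s$; each such derivative of $\gamma^s$ contributes a factor controlled by $\cg$ via \eqref{EQgamma}, at the cost of lowering the power of $\gamma$ by one (or two, when the second derivative of $\gamma$ is used, in which case the bound $|\nabla_{(2)}\gamma|\le\cg(\cg+|A|)$ also introduces an extra $|A|$ factor). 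The commutator terms from moving $\Delta$ past $\nabla^p$ generate curvature-coupled terms of the schematic form $R*\nabla_{(k+1)}A$ plus lower order, which after one further integration by parts are absorbed into the $P$-style and Ricci-polynomial terms on the right-hand side.

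Next I would handle the remaining terms coming from $\partial_t\nabla_{(k)}A$. The terms $(P_3^{k+2}+P_5^k)(A)$ and the two Ricci summations in Lemma \ref{LMevolutionequationhigher} are paired against $\nabla_{(k)}A$ and integrated against $\gamma^s$, producing precisely the $P$-type and $\overline{\nabla_{(p)}\Rc}$-type terms displayed in the statement; no manipulation beyond Cauchy--Schwarz is needed on these, since the lemma deliberately leaves them in unreduced form. The contributions from $\partial_t g^{ij}$ and $\partial_t d\mu$ in the product rule, being of the form $\nabla_{(k)}A*\nabla_{(k)}A*A*\BWv$, expand via $\BWv=\Delta H+H|A^o|^2+H\Rcn(\nu,\nu)$ into terms that are again of $P$-type in $A$ (after integrating by parts the $\Delta H$ once against $\nabla_{(k)}A*\nabla_{(k)}A*A$) together with Ricci-coupled terms, all of which fit the schematic shape on the right; one also gets a $\partial_t\gamma$ term, which via $\partial_t\gamma = (D_\nu\tilde\gamma|_f)\BWv$ and $|D\tilde\gamma|\le c_{\tilde\gamma}$ is bounded by $\cg\,\gamma^{s-1}$ times $|\BWv|$ and absorbed likewise.

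The main bookkeeping obstacle — and the only genuinely non-routine point — is tracking the powers of $\gamma$ so that the weight-derivative error terms end up in the form $c\int_\Sigma |A|^2\gamma^{s-2k-4}\,d\mu$ claimed in the statement. The worst case is when all available derivatives land on the weight: starting from $\gamma^s$ and from the term of top order $|\nabla_{(k+2)}A|^2$, each of the $k+2$ derivatives one peels off (to bring the top-order curvature term down to $|A|$) costs one power of $\gamma$ and a factor $\cg$, and the remaining quadratic structure $|A|^2$ then sits under $\gamma^{s-2(k+2)}=\gamma^{s-2k-4}$; the intermediate mixed terms are interpolated away using Young's inequality, trading a small multiple of $\int|\nabla_{(k+2)}A|^2\gamma^s$ (hence the $2-\delta$ on the left) against the fully-reduced $\int |A|^2\gamma^{s-2k-4}$ term, in direct analogy with the estimate $(\cg)^2\int|\nabla A|^2\gamma^2 \le \delta\int|\nabla_{(2)}A|^2\gamma^4 + c(\cg)^4\int_{[\gamma>0]}|A|^2$ used to close the proof of Lemma \ref{LMenergybasic}. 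Collecting all terms and absorbing the $\delta$-small pieces of $\int|\nabla_{(k+2)}A|^2\gamma^s$ onto the left finishes the proof.
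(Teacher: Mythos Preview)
Your proposal is correct and follows essentially the same approach as the paper: integrate the leading $-\Delta^2\nabla_{(k)}A$ term by parts twice (handling the commutators via $R = \Rcn + A*A$), then use weighted interpolation inequalities of the form
\[
\int_\Sigma |\nabla_{(k+1)}A|^2\gamma^{s-2}\,d\mu \le \delta\int_\Sigma |\nabla_{(k+2)}A|^2\gamma^s\,d\mu + c_\delta\int_\Sigma |A|^2\gamma^{s-2k-4}\,d\mu
\]
to reduce all weight-derivative errors to the $|A|^2\gamma^{s-2k-4}$ term, and absorb the contributions from $\partial_t g$, $\partial_t d\mu$, and $\partial_t\gamma$ after one integration by parts on the $\Delta H$ in $\BWv$; the remaining evolution terms from Lemma \ref{LMevolutionequationhigher} are left in the unreduced $P$- and Ricci-polynomial form displayed in the statement.
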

\begin{proof}
Integrating by parts and interchanging covariant derivatives, we obtain
\begin{align}
       2\int_\Sigma &\sIP{\nabla_{(k)}A}{
                                         - \Delta^2\nabla_{(k)}A
                }\gamma^sd\mu
\notag\\
 &=
      2\int_\Sigma \sIP{\nabla\nabla_{(k)}A}{
                                         \nabla\Delta\nabla_{(k)}A
                }\gamma^sd\mu
    +  \int_\Sigma \big(\nabla_{(k)}A*\nabla\Delta\nabla_{(k)}A*\nabla\gamma\big)
                \gamma^{s-1}d\mu
\notag\\
 &=
      2\int_\Sigma \sIP{\nabla\nabla_{(k)}A}{
                                         \nabla_p\nabla\nabla^p\nabla_{(k)}A
                }\gamma^sd\mu
\notag\\&\quad
    +  \int_\Sigma \big(\nabla_{(k+1)}A*\nabla_{(k+1)}A* (\Rcn+A*A)
                \big)\gamma^sd\mu
\notag\\&\quad
    +  \int_\Sigma \big(\nabla_{(k+2)}A*\nabla_{(k+1)}A*\nabla\gamma\big)
                \gamma^{s-1}d\mu
\notag\\&\quad
    +  \int_\Sigma \big(\nabla_{(k+2)}A*\nabla_{(k)}A*(\gamma\nabla_{(2)}\gamma + \nabla\gamma*\nabla\gamma\big)
                \gamma^{s-2}d\mu
\notag\\
 &=
    - 2\int_\Sigma |\nabla_{(k+2)}A|^2 \gamma^sd\mu
    + \int_\Sigma \big(\nabla_{(k+2)}A*\nabla_{(k+1)}A*\nabla\gamma\big)
                \gamma^{s-1}d\mu
\notag\\&\quad
    +  \int_\Sigma \big(\nabla_{(k+1)}A*\nabla_{(k+1)}A* (\Rcn+A*A)
                \big)\gamma^sd\mu
\notag\\&\quad
    +  \int_\Sigma \big(\nabla_{(k)}A*\nabla_{(k+2)}A* (\Rcn+A*A)
                \big)\gamma^sd\mu
\notag\\&\quad
    +  \int_\Sigma \big(\nabla_{(k+2)}A*\nabla_{(k+1)}A*\nabla\gamma\big)
                \gamma^{s-1}d\mu
\notag\\&\quad
    +  \int_\Sigma \big(\nabla_{(k+2)}A*\nabla_{(k)}A*(\gamma\nabla_{(2)}\gamma + \nabla\gamma*\nabla\gamma\big)
                \gamma^{s-2}d\mu
\notag\\
 &\le
    - (2-\delta)\int_\Sigma |\nabla_{(k+2)}A|^2 \gamma^sd\mu
    + c(\cg)^2\int_\Sigma |\nabla_{(k+1)}A|^2
                \gamma^{s-2}d\mu
\notag\\&\quad
    + c\int_\Sigma |\nabla_{(k)}A|^2|\Rcn|^2
                \gamma^sd\mu
    + c\int_\Sigma |\nabla_{(k)}A|^2|A|^4
                \gamma^sd\mu
\notag\\&\quad
    + c\int_\Sigma |\nabla_{(k+1)}A|^2|\Rcn|
                \gamma^sd\mu
    + c\int_\Sigma |\nabla_{(k+1)}A|^2|A|^2
                \gamma^sd\mu
\notag\\&\quad
    + c(\cg)^2\int_\Sigma |\nabla_{(k+2)}A|\,|\nabla_{(k)}A|\gamma^{s-2}d\mu
\notag\\&\quad
    + c(\cg)\int_\Sigma |\nabla_{(k+2)}A|\,|\nabla_{(k)}A|\,(\cg+|A|)\gamma^{s-1}d\mu
\notag\\
 &\le
    - (2-\delta)\int_\Sigma |\nabla_{(k+2)}A|^2 \gamma^sd\mu
    + c(\cg)^2\int_\Sigma |\nabla_{(k+1)}A|^2
                \gamma^{s-2}d\mu
\notag\\&\quad
    + c\int_\Sigma |\nabla_{(k)}A|^2|\Rcn|^2
                \gamma^sd\mu
    + c\int_\Sigma |\nabla_{(k)}A|^2|A|^4
                \gamma^sd\mu
\notag\\&\quad
    + c\int_\Sigma |\nabla_{(k+1)}A|^2|\Rcn|
                \gamma^sd\mu
\notag\\&\quad
    + c\int_\Sigma |\nabla_{(k+1)}A|^2|A|^2
                \gamma^sd\mu
    + c(\cg)^4\int_\Sigma |\nabla_{(k)}A|^2\gamma^{s-4}d\mu
\notag\\&\quad
    + c(\cg)^4\int_\Sigma |\nabla_{(k)}A|^2\gamma^{s-2}d\mu
    + c(\cg)^2\int_\Sigma |\nabla_{(k)}A|^2|A|^2\gamma^{s-2}d\mu\,.
\label{LMevolutionequationhigherEQderivation1}
\end{align}
A standard argument with integration by parts and Young's inequality yields the interpolation inequalities
\begin{equation}
\label{LMevolutionequationhigherEQint1}
\int_\Sigma |\nabla_{(k+1)}A|^2\gamma^{s-2}d\mu
 \le \delta\int_\Sigma |\nabla_{(k+2)}A|^2\gamma^sd\mu + c_\delta \int_\Sigma |A|^2\gamma^{s-2k-4}d\mu\,,
\end{equation}
and
\begin{align}
\int_\Sigma |\nabla_{(k)}A|^2\gamma^{s-4}d\mu
 &\le \int_\Sigma |\nabla_{(k+1)}A|^2\gamma^{s-2}d\mu + c \int_\Sigma |A|^2\gamma^{s-2k-4}d\mu
\notag\\
 &\le \delta\int_\Sigma |\nabla_{(k+2)}A|^2\gamma^sd\mu + c_\delta \int_\Sigma |A|^2\gamma^{s-2k-4}d\mu\,.
\label{LMevolutionequationhigherEQint2}
\end{align}
The estimates \eqref{LMevolutionequationhigherEQint1}, \eqref{LMevolutionequationhigherEQint2} above allow us
to interpolate away several terms on the right hand side of \eqref{LMevolutionequationhigherEQderivation1}.
For the remaining terms we use
\begin{align}
    c(\cg)^2&\int_\Sigma |\nabla_{(k+1)}A|^2|A|^2
                \gamma^{s}d\mu
   + c(\cg)^2\int_\Sigma |\nabla_{(k)}A|^2|A|^2\gamma^{s-2}d\mu
\notag\\&=
     c(\cg)^2\int_\Sigma |\nabla_{(k)}A|^2|A|^2\gamma^{s-2}d\mu
\notag\\&\qquad
   + c(\cg)^2\int_\Sigma \big(\nabla_{(k)}A*\nabla_{(k+2)}A*A*A\big)
                \gamma^{s-2}d\mu
\notag\\&\qquad
   + c(\cg)^2\int_\Sigma \big(\nabla_{(k)}A*\nabla_{(k+1)}A*\nabla A*A\big)
                \gamma^{s-2}d\mu
\notag\\&\qquad
   + (s-2)c(\cg)^2\int_\Sigma \big(\nabla_{(k)}A*\nabla_{(k+1)}A*A*A*\nabla\gamma\big)
                \gamma^{s-3}d\mu
\notag\\&\le
     c\int_\Sigma |\nabla_{(k+1)}A|^2\gamma^{s-2}d\mu
   + c\int_\Sigma |\nabla_{(k)}A|^2\gamma^{s-4}d\mu
\notag\\&\qquad
  + c\int_\Sigma \nabla_{(k)}A*\big(P_3^{k+2} + P_5^k\big)(A)\,\gamma^sd\mu
\label{LMevolutionequationhigherEQestimate1}
\end{align}
In the last estimate above and for the rest of the proof we shall no longer write the dependence of $c$ on
$\cg$ explicitly.
Combining \eqref{LMevolutionequationhigherEQderivation1}, \eqref{LMevolutionequationhigherEQint1},
\eqref{LMevolutionequationhigherEQint2}, and \eqref{LMevolutionequationhigherEQestimate1} we obtain the
estimate
\begin{align}
       2\int_\Sigma &\sIP{\nabla_{(k)}A}{
                                         - \Delta^2\nabla_{(k)}A
                }\gamma^sd\mu
    + (2-\delta)\int_\Sigma |\nabla_{(k+2)}A|^2 \gamma^sd\mu
\notag\\
 &\le
    c\int_\Sigma \nabla_{(k)}A*\big(P_3^{k+2} + P_5^k\big)(A)\,\gamma^sd\mu
  + c\int_\Sigma |A|^2\gamma^{s-2k-4}d\mu\,,
\label{LMevolutionequationhigherEQestimate2}
\end{align}
where $c$ depends only on $\SK_0$, $s$, $\cg$ and $\delta$.
The estimate \eqref{LMevolutionequationhigherEQestimate2} is the leading order term in our main estimate below
for the evolution of the concentration of curvature in $L^2$.

We compute
\begin{align}
\rD{}{t}&\int_\Sigma|\nabla_{(k)}A|^2\gamma^sd\mu
\notag\\&=
       2\int_\Sigma \sIP{\nabla_{(k)}A}{\partial_t\nabla_{(k)}A}\gamma^sd\mu
\notag\\&\quad
         + 2\int_\Sigma \big(\nabla_{(k)}A*\nabla_{(k)}A*A\big)
                    \big(\Delta H + H|A^o|^2 + H\Rcn(\nu,\nu)\big)\gamma^sd\mu
\notag\\&\quad
     + s\int_\Sigma |\nabla_{(k)}A|^2\partial_t\gamma\gamma^{s-1}d\mu
    \notag\\&\quad
     + 2\int_\Sigma |\nabla_{(k)}A|^2H\big(
                        \Delta H + H|A^o|^2 + H\Rcn(\nu,\nu)
                        \big)\gamma^{s}d\mu
\notag\\&=
       2\int_\Sigma \sIP{\nabla_{(k)}A}{
                                         - \Delta^2\nabla_{(k)}A
                }\gamma^sd\mu
\notag\\&\quad
     + \int_\Sigma \nabla_{(k)}A*
                       \big(P_3^{k+2} + P_5^k\big)(A)\,\gamma^sd\mu
\notag\\&\quad
+\int_\Sigma \nabla_{(k)}A*
                       \bigg(
                     \sum_{i+j+p=k+3} \Big(  \overline{\nabla_{(p)}\Rc} * P_j^i(A)\Big)
\notag\\&\hskip+3cm
  + \sum_{i+j+p=k+1} \Big(  \overline{\nabla_{(p)}(\Rc*\Rc)} * P_j^i(A)\Big)\bigg)\gamma^sd\mu
\notag\\&\quad
     + 2\int_\Sigma \big(\nabla_{(k)}A*\nabla_{(k)}A*A\big)
                    \big(\Delta H + H|A^o|^2 + H\Rcn(\nu,\nu)\big)
                    \gamma^sd\mu
\notag\\&\quad
    + s\int_\Sigma |\nabla_{(k)}A|^2
 (D_\nu\tilde{\gamma}|_f)(\Delta H + H|A^o|^2 + H\Rcn(\nu,\nu))
                   \gamma^{s-1}d\mu
\notag\\&\quad
    + 2\int_\Sigma |\nabla_{(k)}A|^2H\big(
                        \Delta H + H|A^o|^2 + H\Rcn(\nu,\nu)
                        \big)\gamma^{s}d\mu
\notag\\&\le
    - (2-\delta)\int_\Sigma |\nabla_{(k+2)}A|^2 \gamma^sd\mu
  + c\int_\Sigma \nabla_{(k)}A*\big(P_3^{k+2} + P_5^k\big)(A)\,\gamma^sd\mu
\notag\\&\quad
  + c\int_\Sigma |A|^2\gamma^{s-2k-4}d\mu
\notag\\&\quad
     +\int_\Sigma \nabla_{(k)}A*\bigg(
                                \sum_{i+j+p=k+3} \Big(  \overline{\nabla_{(p)}\Rc} * P_j^i(A)\Big)
\notag\\&\hskip+3cm
+ \sum_{i+j+p=k+1} \Big(  \overline{\nabla_{(p)}(\Rc*\Rc)} * P_j^i(A)\Big)\bigg)\gamma^sd\mu
\notag\\&\quad
  + c\int_\Sigma \nabla_{(k)}A*\nabla_{(k)}A*A*A*\Rcn\,\gamma^sd\mu
\notag\\&\quad
    + c\int_\Sigma |\nabla_{(k)}A|^2
 (D_\nu\tilde{\gamma}|_f)(\Delta H + H|A^o|^2 + H\Rcn(\nu,\nu))
                   \gamma^{s-1}d\mu
\notag\\&\quad
     + c\int_\Sigma |\nabla_{(k)}A|^2H\big(
                        \Delta H + H|A^o|^2 + H\Rcn(\nu,\nu)
                        \big)\gamma^{s}d\mu\,,
\label{LMevolutionequationhigherEQderivation2}
\end{align}
where \eqref{LMevolutionequationhigherEQestimate2} was used
in the last step.
The rightmost triple of integrals are estimated with
\begin{align}
    c\int_\Sigma &|\nabla_{(k)}A|^2
 (D_\nu\tilde{\gamma}|_f)(\Delta H + H|A^o|^2 + H\Rcn(\nu,\nu))
                   \gamma^{s-1}d\mu
\notag\\
  &+ c\int_\Sigma |\nabla_{(k)}A|^2H\big(
                        \Delta H + H|A^o|^2 + H\Rcn(\nu,\nu)
                        \big)\gamma^{s}d\mu
\notag\\
  &+ c\int_\Sigma \nabla_{(k)}A*\nabla_{(k)}A*A*A*\Rcn\,\gamma^sd\mu
\notag\\
&\le
   -c\int_\Sigma  \nabla H*\nabla\big(|\nabla_{(k)}A|^2
 (D_\nu\tilde{\gamma}|_f)
                   \gamma^{s-1}\big)d\mu
\notag\\&\quad
   + c\int_\Sigma |\nabla_{(k)}A|^2\gamma^{s}d\mu
   + c\int_\Sigma |\nabla_{(k)}A|^2|A|^2\gamma^{s}d\mu
   + c\int_\Sigma |\nabla_{(k)}A|^2|A|^2\gamma^{s-2}d\mu
\notag\\&\quad
   + c\int_\Sigma \nabla_{(k)}A*\big(P_3^{k+2} + P_5^k\big)(A)\,\gamma^sd\mu
\notag\\
&\le
    c\int_\Sigma  |\nabla A|\,|\nabla_{(k)}A|\,|\nabla_{(k+1)}A|\,
                   \gamma^{s-1}d\mu
  + c\int_\Sigma  |\nabla A|\,|\nabla_{(k)}A|^2\,(1+|A|)\,
                   \gamma^{s-1}d\mu
\notag\\&\quad
  + c\int_\Sigma  |\nabla A|\,|\nabla_{(k)}A|^2
                   \gamma^{s-2}d\mu
   + c\int_\Sigma |\nabla_{(k)}A|^2\gamma^{s-4}d\mu
\notag\\&\quad
   + c\int_\Sigma |\nabla_{(k)}A|^2|A|^2\gamma^{s-2}d\mu
   + c\int_\Sigma \nabla_{(k)}A*\big(P_3^{k+2} + P_5^k\big)(A)\,\gamma^sd\mu
\notag\\
&\le
    c\int_\Sigma  |\nabla_{(k+1)}A|^2
                   \gamma^{s-2}d\mu
  + c\int_\Sigma  |\nabla_{(k)}A|^2
                   \gamma^{s-4}d\mu
  + c\int_\Sigma |\nabla_{(k)}A|^2|A|^2\gamma^{s-2}d\mu
\notag\\&\quad
  + c\int_\Sigma  |\nabla_{(k)}A|^2|\nabla A|^2
                   \gamma^{s}d\mu
   + c\int_\Sigma \nabla_{(k)}A*\big(P_3^{k+2} + P_5^k\big)(A)\,\gamma^sd\mu
\notag\\
&\le
    c\int_\Sigma  |\nabla_{(k+1)}A|^2
                   \gamma^{s-2}d\mu
  + c\int_\Sigma  |\nabla_{(k)}A|^2
                   \gamma^{s-4}d\mu
  + c\int_\Sigma |\nabla_{(k)}A|^2|A|^2\gamma^{s-2}d\mu
\notag\\&\quad
   + c\int_\Sigma \nabla_{(k)}A*\big(P_3^{k+2} + P_5^k\big)(A)\,\gamma^sd\mu
\notag\\&\le
     \delta\int_\Sigma |\nabla_{(k+2)}A|^2\gamma^{s}d\mu
   + c\int_\Sigma |\nabla_{(k+1)}A|^2\gamma^{s-2}d\mu
   + c\int_\Sigma |\nabla_{(k)}A|^2\gamma^{s-4}d\mu
\notag\\&\qquad
  + c\int_\Sigma \nabla_{(k)}A*\big(P_3^{k+2} + P_5^k\big)(A)\,\gamma^sd\mu
\label{LMevolutionequationhigherEQestimate3}
\,,
\end{align}
where we again applied \eqref{LMevolutionequationhigherEQestimate1}.
Combining \eqref{LMevolutionequationhigherEQestimate3} with \eqref{LMevolutionequationhigherEQderivation2} and using \eqref{LMevolutionequationhigherEQint1},
\eqref{LMevolutionequationhigherEQint2}, we have
\begin{align}
\rD{}{t}&\int_\Sigma|\nabla_{(k)}A|^2\gamma^sd\mu
    + (2-\delta)\int_\Sigma |\nabla_{(k+2)}A|^2 \gamma^sd\mu
\notag\\
&\le
    c\int_\Sigma \nabla_{(k)}A*\big(P_3^{k+2} + P_5^k\big)(A)\,\gamma^sd\mu
  + c\int_\Sigma |A|^2\gamma^{s-2k-4}d\mu
\notag\\&\quad
     + \int_\Sigma \nabla_{(k)}A*
                       \bigg( \sum_{i+j+p=k+3} \Big(  \overline{\nabla_{(p)}\Rc} * P_j^i(A)\Big)
 \notag\\&\hskip+3cm
  + \sum_{i+j+p=k+1} \Big(  \overline{\nabla_{(p)}(\Rc*\Rc)} * P_j^i(A)\Big)
                           \bigg)\gamma^sd\mu
\,,
\label{LMevolutionequationhigherEQderivation4}
\end{align}
This finishes the proof.
\end{proof}

For later application we split out the cases $k=1$ and $k=2$ from Lemma \ref{LMenergyhighervanillaconstantricci}.

\begin{lem}
\label{LMenergygrad1}
Suppose $(N,\nIP{\cdot}{\cdot})$ is smooth and let $f:\Sigma\times[0,T)\rightarrow N$ be a solution of \eqref{EQwf}.
There exists a constant $c\in(0,\infty)$ depending only on $\SK_i$ for $i=0,1,2,3$, $s$ and $\cg$ such that the
evolution of the concentration of $\nabla A$ in $L^2$ is estimated by
\begin{align*}
\rD{}{t}&\int_\Sigma|\nabla A|^2\gamma^sd\mu
    + \frac32\int_\Sigma |\nabla_{(3)}A|^2 \gamma^sd\mu
\\*
&\le
    c\int_\Sigma |A|^2\gamma^{s-6}d\mu
  + c\int_\Sigma |\overline{\nabla_{(4)}\Rc}|^2\gamma^sd\mu
  + c\int_\Sigma |A|^6\gamma^{s}d\mu
\\&\hskip+2.86cm
  + c\int_\Sigma \nabla A*\big(P_3^3 + P_5^1\big)(A)\,\gamma^sd\mu
\,.
\end{align*}
\end{lem}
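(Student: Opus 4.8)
The plan is to deduce Lemma~\ref{LMenergygrad1} from Lemma~\ref{LMenergyhighervanillaconstantricci} specialised to $k=1$, handling the two ambient-curvature summations on its right-hand side explicitly. Applying Lemma~\ref{LMenergyhighervanillaconstantricci} with $k=1$ and a parameter $\delta_0\in(0,\tfrac12)$ to be fixed later yields
\begin{align*}
\rD{}{t}\int_\Sigma|\nabla A|^2\gamma^sd\mu &+ (2-\delta_0)\int_\Sigma|\nabla_{(3)}A|^2\gamma^sd\mu
\\
&\le c\int_\Sigma|A|^2\gamma^{s-6}d\mu
  + c\int_\Sigma\nabla A*(P_3^3+P_5^1)(A)\,\gamma^sd\mu
  + I\,,
\end{align*}
where $I$ denotes the contraction of $\nabla A$ with $\sum_{i+j+p=4}\overline{\nabla_{(p)}\Rc}*P_j^i(A)$ and with $\sum_{i+j+p=2}\overline{\nabla_{(p)}(\Rc*\Rc)}*P_j^i(A)$, integrated against $\gamma^s$, and where I have used $s-2k-4=s-6$. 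The first three terms are already in the asserted form, so the whole task is to bound $I$ by the right-hand side of Lemma~\ref{LMenergygrad1}.

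First I would replace ambient curvature by its supremum bounds: for $p\le3$ one has $|\overline{\nabla_{(p)}\Rc}|\le\SK_p<\infty$, while $\overline{\nabla_{(p)}(\Rc*\Rc)}$ splits by the Leibniz rule into $\overline{\nabla_{(a)}\Rc}*\overline{\nabla_{(b)}\Rc}$ with $a+b=p\le2$, bounded by $\SK_a\SK_b$; this is where the dependence of $c$ on $\SK_0,\dots,\SK_3$ comes from. The only term keeping an ambient derivative is $\overline{\nabla_{(4)}\Rc}$ (the top-order ambient term $\overline{\nabla_{(3+k)}\Rc}$, $k=1$, of Lemma~\ref{LMevolutionequationhigher}): its contribution $\int_\Sigma\nabla A*\overline{\nabla_{(4)}\Rc}\,\gamma^sd\mu$ is controlled by Young's inequality by $\tfrac12\int_\Sigma|\nabla A|^2\gamma^sd\mu+\tfrac12\int_\Sigma|\overline{\nabla_{(4)}\Rc}|^2\gamma^sd\mu$, which is exactly the explicit curvature term in the statement. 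After this reduction every remaining integrand in $I$ is a universal constant times a $*$-monomial $\nabla A*\nabla_{(i)}A*(\text{monomial in }A)\,\gamma^s$ of bounded budget, the worst being $\overline{\nabla_{(p)}\Rc}*\nabla_{(3-p)}A$ (the $j=1$ summand, giving in particular $\Rcn*\nabla_{(3)}A$) and $\overline{\nabla_{(p)}\Rc}*P_j^i(A)$ with $i+j=4-p$, $j\ge2$; the $\Rc*\Rc$ sum produces only lower-budget terms since $i+j+p=2$.

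It then remains to dispatch these monomials, using Young's inequality together with the interpolation inequalities \eqref{LMevolutionequationhigherEQint1} and \eqref{LMevolutionequationhigherEQint2} established in the proof of Lemma~\ref{LMenergyhighervanillaconstantricci} (both taken with $k=1$, which is where the weight $\gamma^{s-6}$ enters). The admissible ``buckets'' are $\delta\int|\nabla_{(3)}A|^2\gamma^s$ (absorbed into the left-hand side), $c\int|A|^2\gamma^{s-6}$, $c\int|A|^6\gamma^s$, $\tfrac12\int|\overline{\nabla_{(4)}\Rc}|^2\gamma^s$, and $c\int\nabla A*(P_3^3+P_5^1)(A)\gamma^s$. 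Every pure $|\nabla A|^2$ term obeys $\int|\nabla A|^2\gamma^s\le\int|\nabla A|^2\gamma^{s-4}\le\delta\int|\nabla_{(3)}A|^2\gamma^s+c\int|A|^2\gamma^{s-6}$ by \eqref{LMevolutionequationhigherEQint2}; any $\int|\nabla_{(2)}A|^2\gamma^s$ (and the term $\Rcn*\nabla_{(3)}A$ after splitting off $\epsilon|\nabla_{(3)}A|^2$) obeys $\int|\nabla_{(2)}A|^2\gamma^s\le\int|\nabla_{(2)}A|^2\gamma^{s-2}\le\delta\int|\nabla_{(3)}A|^2\gamma^s+c\int|A|^2\gamma^{s-6}$ by \eqref{LMevolutionequationhigherEQint1}; pure $A$-powers satisfy $|A|^{2m}\le|A|^2+|A|^6$ for $1\le m\le3$, with $\int|A|^2\gamma^s\le\int|A|^2\gamma^{s-6}$ since $0\le\gamma\le1$; and the mixed monomials ($|\nabla A||A|^m$, $|\nabla A|^2|A|^m$, $|\nabla A|^3$) are handled with Young's inequality so as to keep enough $A$-factors on the gradient term to land in $\nabla A*P_5^1(A)$ or $\nabla A*P_3^3(A)$ (for instance $|\nabla A|^2|A|^2\le\tfrac12|\nabla A|^2+\tfrac12|\nabla A|^2|A|^4$ with $|\nabla A|^2|A|^4\in\nabla A*P_5^1(A)$, and $|\nabla A|^3\le\tfrac12|\nabla A|^2+\tfrac12|\nabla A|^4$ with $|\nabla A|^4\in\nabla A*P_3^3(A)$), the leftover $|\nabla A|^2$'s and $A$-powers being absorbed as above. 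Finally one chooses $\delta_0$ and the finitely many Young parameters small enough that the total coefficient of $\int|\nabla_{(3)}A|^2\gamma^s$ transferred to the left-hand side stays below $\tfrac12$, leaving $\tfrac32$ as claimed; their number, hence the resulting $c$, depends only on $s$, $\cg$ and $\SK_0,\dots,\SK_3$.

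The step I expect to be the main obstacle is exactly this bookkeeping: one must verify that \emph{every} monomial that appears in $I$, after the curvature bounds and one or two Young splittings, really does fall into one of the five buckets --- and in particular that no integrand of the form $c\int_\Sigma\gamma^sd\mu$, an uncontrolled area term, is ever generated. The latter holds because the convention $P_j^i=0$ for $j\le0$ kills the ``constant'' pieces of both summations, so the only genuinely top-order ambient contribution is the $L^2$-controlled $\overline{\nabla_{(4)}\Rc}$ and not a constant. A secondary point requiring care is the $\gamma$-weights: one must always pass from $\gamma^s$ down to $\gamma^{s-2}$, $\gamma^{s-4}$ or $\gamma^{s-6}$ (legitimate since $0\le\gamma\le1$) before invoking \eqref{LMevolutionequationhigherEQint1}--\eqref{LMevolutionequationhigherEQint2}, never the other way.
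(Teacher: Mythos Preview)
Your proposal is correct and follows essentially the same route as the paper's own proof: specialise Lemma~\ref{LMenergyhighervanillaconstantricci} to $k=1$, isolate the $\overline{\nabla_{(4)}\Rc}$ contribution via Young's inequality, bound all lower-order ambient factors by $\SK_0,\dots,\SK_3$, and then reduce every resulting monomial to one of the five ``buckets'' using Young together with the interpolation inequalities \eqref{LMevolutionequationhigherEQint1}--\eqref{LMevolutionequationhigherEQint2} (the paper re-derives these as \eqref{LMenergygrad1EQ11}--\eqref{LMenergygrad1EQ3}, but they are the same estimates), finally absorbing the $\delta$-multiples of $\int|\nabla_{(3)}A|^2\gamma^s$ on the left. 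The paper presents the bookkeeping by explicitly listing the $|P_j^i|$ terms that survive and then estimating each; your schematic enumeration covers the same cases and reaches the same conclusion.
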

\begin{proof}
From Lemma \ref{LMenergyhighervanillaconstantricci} with $k=1$ we find
\begin{align*}
\rD{}{t}&\int_\Sigma|\nabla A|^2\gamma^sd\mu
    + \frac32\int_\Sigma |\nabla_{(3)}A|^2 \gamma^sd\mu
\\*
&\le
    c\int_\Sigma |A|^2\gamma^{s-6}d\mu
  + c\int_\Sigma \nabla A*\big(P_3^3 + P_5^1\big)(A)\,\gamma^sd\mu
\\&\hskip+2cm
  + c\int_\Sigma \nabla A*\big(\overline{\nabla_{(4)}\Rc} + \overline{\nabla_{(3)}\Rc}*A\big)\,\gamma^sd\mu
\\&\hskip+2cm
  + c\int_\Sigma \nabla A*\big(|P_1^0|+|P_1^1|+|P^2_1|+|P^3_1|+|P^0_2|
  \\&\qquad\qquad\qquad\qquad\qquad\qquad
  +|P_2^1|+|P^2_2|+|P_3^0|+|P^1_3|+|P_4^0|\big)(A)\,\gamma^sd\mu
\,.
\end{align*}
The constant on the right hand side depends on $\SK_i$ for $i=0,1,2$.
From this point onward we also allow the constant $c$ to depend upon $\SK_3$.
We have used the notation $|P_i^j|$ to mean that the norm of each term in that sum of contractions is taken.
Using Young's inequality we estimate the right hand side by
\begin{align}
\notag
c\int_\Sigma &\nabla A*\big(\overline{\nabla_{(4)}\Rc} + P_3^3 + |P_1^0|+|P_1^1|+|P^2_1|+|P^3_1|+|P^0_2|
\\&\qquad\qquad\qquad\qquad
  +|P_2^1|+|P^2_2|+|P_3^0|+|P^1_3|+|P_4^0|+P_5^1\big)(A)\,\gamma^sd\mu
\notag\\
&\le
   c\int_\Sigma \nabla A*\big(\overline{\nabla_{(4)}\Rc} + P_3^3 + P_5^1 +
    |A| + |\nabla A| + |\nabla_{(2)}A| + |\nabla_{(3)}A|
\notag
  \\&\qquad\qquad\qquad\qquad
   + |A*\nabla A| + |\nabla A| * |\nabla A| + |A*\nabla_{(2)}A| + |A*A*A|
\notag
  \\&\qquad\qquad\qquad\qquad\qquad
    + |A*A| + |A*A*\nabla A| + |A*A*A*A|\big)(A)\,\gamma^sd\mu
\notag\\
&\le
   c\int_\Sigma \nabla A*\big(P_3^3 + P_5^1\big)(A)\,\gamma^sd\mu
 + \delta\int_\Sigma |\nabla_{(3)} A|^2\gamma^sd\mu
 + c\int_\Sigma |\nabla_{(2)} A|^2\gamma^sd\mu
\notag\\&
\qquad
 + c\int_\Sigma |\nabla A|^2\gamma^sd\mu
 + c\int_\Sigma |\nabla A|^4\gamma^sd\mu
 + c\int_\Sigma |A|^6\gamma^sd\mu
\notag\\&
\qquad
 + c\int_\Sigma |A|^2\gamma^sd\mu
 + c\int_\Sigma |\overline{\nabla_{(4)}\Rc}|^2\gamma^sd\mu
\,.
\label{LMenergygrad1EQ1}
\end{align}
Let us now estimate each of the terms on the right hand side of \eqref{LMenergygrad1EQ1} in turn.
We begin with
\begin{align}
 \int_\Sigma |\nabla_{(2)} A|^2\gamma^sd\mu
  &= - \int_\Sigma \sIP{\nabla_{(3)} A}{\nabla A}\,\gamma^sd\mu
    - s\int_\Sigma \sIP{\nabla_{(2)} A}{\nabla A\nabla\gamma}\,\gamma^{s-1}d\mu
\notag\\
 &\le
      \delta\int_\Sigma |\nabla_{(3)} A|^2\gamma^sd\mu
    + \frac12\int_\Sigma |\nabla_{(2)} A|^2\gamma^sd\mu
\notag\\
 &\qquad
    + c\int_\Sigma |\nabla A|^2\gamma^sd\mu
    + c\int_\Sigma |\nabla A|^2\gamma^{s-2}d\mu
\notag
\intertext{so}
 \int_\Sigma |\nabla_{(2)} A|^2\gamma^sd\mu
  &\le
      \delta\int_\Sigma |\nabla_{(3)} A|^2\gamma^sd\mu
    + c\int_\Sigma |\nabla A|^2\gamma^{s-2}d\mu\,.
\label{LMenergygrad1EQ11}
\intertext{An analogous computation yields}
 \int_\Sigma |\nabla A|^2\gamma^{s-2}d\mu
  &\le
      \delta\int_\Sigma |\nabla_{(2)} A|^2\gamma^sd\mu
    + c\int_\Sigma |A|^2\gamma^{s-4}d\mu\,.
\label{LMenergygrad1EQ2}
\end{align}
Combining \eqref{LMenergygrad1EQ11} and \eqref{LMenergygrad1EQ2} we find
\begin{align}
 \int_\Sigma |\nabla_{(2)} A|^2\gamma^{s}d\mu
  &\le
      \delta\int_\Sigma |\nabla_{(3)} A|^2\gamma^sd\mu
    + c\int_\Sigma |A|^2\gamma^{s-4}d\mu\,.
\label{LMenergygrad1EQ3}
\end{align}
Estimate \eqref{LMenergygrad1EQ3} deals with the third term on the right hand side of \eqref{LMenergygrad1EQ1}, while the fourth term on the right hand side of
\eqref{LMenergygrad1EQ1} is estimated by first applying \eqref{LMenergygrad1EQ2} and then using \eqref{LMenergygrad1EQ3}.
Observe that the fifth term is already no problem:
\[
 \int_\Sigma |\nabla A|^4\gamma^sd\mu
 = \int_\Sigma \nabla A*P_3^3(A)\,\gamma^sd\mu\,.
\]
Choosing $\delta$ sufficiently small and absorbing finishes the proof.
\end{proof}

\begin{lem}
\label{LMenergygrad2}
Suppose $(N,\nIP{\cdot}{\cdot})$ is smooth and let $f:\Sigma\times[0,T)\rightarrow N$ be a solution of \eqref{EQwf}.
There exists a constant $c\in(0,\infty)$ depending only on $\SK_i$ for $i=0,1,2,3,4$, $s$ and $\cg$ such that the
evolution of the concentration of $\nabla_{(2)}A$ in $L^2$ is estimated by
\begin{align*}
\rD{}{t}&\int_\Sigma|\nabla_{(2)}A|^2\gamma^sd\mu
    + \frac32\int_\Sigma |\nabla_{(4)}A|^2 \gamma^sd\mu
\notag
\\*
&\le
   c\int_\Sigma \nabla_{(2)} A*\big(P_3^4 + P_5^2\big)(A)\,\gamma^sd\mu
 + c\int_\Sigma |A|^6\gamma^sd\mu
\notag\\&\qquad
 + c\int_\Sigma |\overline{\nabla_{(5)}\Rc}|^2\gamma^sd\mu
 + c\int_\Sigma |A|^2\gamma^{s-8}d\mu
\,.
\end{align*}
\end{lem}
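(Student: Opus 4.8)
The plan is to run the argument of Lemma~\ref{LMenergygrad1} one derivative of $A$ (and of $\Rc$) higher. First I would apply Lemma~\ref{LMenergyhighervanillaconstantricci} with $k=2$ and weight $\gamma^s$. This already places on the right-hand side the two claimed terms $c\int_\Sigma|A|^2\gamma^{s-8}\,d\mu$ and $c\int_\Sigma\nabla_{(2)}A*(P_3^4+P_5^2)(A)\,\gamma^s\,d\mu$, keeps the good term $(2-\delta)\int_\Sigma|\nabla_{(4)}A|^2\gamma^s\,d\mu$ on the left, and leaves only the two ambient-curvature sums, $\int_\Sigma\nabla_{(2)}A*\sum_{i+j+p=5}\big(\overline{\nabla_{(p)}\Rc}*P_j^i(A)\big)\gamma^s\,d\mu$ and $\int_\Sigma\nabla_{(2)}A*\sum_{i+j+p=3}\big(\overline{\nabla_{(p)}(\Rc*\Rc)}*P_j^i(A)\big)\gamma^s\,d\mu$, to be disposed of. So the whole task reduces to showing that these two sums are bounded by $c\int_\Sigma|\overline{\nabla_{(5)}\Rc}|^2\gamma^s\,d\mu+c\int_\Sigma|A|^6\gamma^s\,d\mu$ together with quantities that are either absorbed into $\int_\Sigma|\nabla_{(4)}A|^2\gamma^s\,d\mu$ on the left or are already of the form $c\int_\Sigma|A|^2\gamma^{s-8}\,d\mu$ or $c\int_\Sigma\nabla_{(2)}A*(P_3^4+P_5^2)(A)\,\gamma^s\,d\mu$.

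In the first sum I would split according to the ambient order $p$. The only contribution of order $p=5$ is the bare $\overline{\nabla_{(5)}\Rc}$ (the term $\overline{\nabla_{(3+k)}\Rc}$ of Lemma~\ref{LMevolutionequationhigher}); contracting it with $\nabla_{(2)}A$ and applying Young's inequality yields the desired $c\int_\Sigma|\overline{\nabla_{(5)}\Rc}|^2\gamma^s\,d\mu$ plus a multiple of $\int_\Sigma|\nabla_{(2)}A|^2\gamma^s\,d\mu$. For $p\le 4$ one has $|\overline{\nabla_{(p)}\Rc}|\le\SK_p$, a constant depending only on the metric of $N$, so each such contribution becomes $c\int_\Sigma|\nabla_{(2)}A|\,|P_j^i(A)|\,\gamma^s\,d\mu$ with $i+j\le 4$; likewise the Leibniz rule and $|\overline{\nabla_{(a)}\Rc}|\le\SK_a$ give $|\overline{\nabla_{(p)}(\Rc*\Rc)}|\le c$ for $p\le 3$, so every term of the second sum becomes $c\int_\Sigma|\nabla_{(2)}A|\,|P_j^i(A)|\,\gamma^s\,d\mu$ with $i+j\le 3$. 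At this stage the constant $c$ depends only on $\SK_i$ for $i\in\{0,1,2,3,4\}$, as the statement requires.

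It remains to estimate the terms $c\int_\Sigma|\nabla_{(2)}A|\,|P_j^i(A)|\,\gamma^s\,d\mu$ just produced, and I would do this exactly as in the derivation of \eqref{LMevolutionequationhigherEQestimate1} and of \eqref{LMenergygrad1EQ1}--\eqref{LMenergygrad1EQ3}. Each derivative of $A$ of order beyond the second is handled by a combination of Young's inequality (peeling off a small multiple of $\int_\Sigma|\nabla_{(4)}A|^2\gamma^s\,d\mu$, absorbed on the left) and integration by parts against $\nabla_{(2)}A$, the effect of which is to rewrite the genuinely scale-critical part of each such term as $\nabla_{(2)}A*P_3^4(A)$ (for instance $|\nabla_{(2)}A|^2|\nabla A|^2=\nabla_{(2)}A*P_3^4(A)$) or $\nabla_{(2)}A*P_5^2(A)$ (for instance $|\nabla_{(2)}A|^2|A|^4=\nabla_{(2)}A*P_5^2(A)$, arising from $\Rcn*A^5$) or $|A|^6$, up to remainders of the purely subcritical form $\int_\Sigma|\nabla_{(i)}A|^2\gamma^{s'}\,d\mu$ with $i\le 3$ and $s'\ge s-8$. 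These last remainders are then removed by the interpolation inequalities \eqref{LMevolutionequationhigherEQint1} and \eqref{LMevolutionequationhigherEQint2} (after using $\gamma^s\le\gamma^{s'}$, valid since $0\le\gamma\le 1$), each of which bounds such an integral by $\delta\int_\Sigma|\nabla_{(4)}A|^2\gamma^s\,d\mu+c\int_\Sigma|A|^2\gamma^{s-8}\,d\mu$. Finally one chooses $\delta$ small enough to bring the coefficient of $\int_\Sigma|\nabla_{(4)}A|^2\gamma^s\,d\mu$ down from $2-\delta$ to $\tfrac32$, which completes the proof.

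The work here is entirely bookkeeping, but it does require care: for each triple $(i,j,p)$ with $i+j+p=5$ or $i+j+p=3$ one must verify that Young's inequality and the integration by parts can be arranged so that no factor of $\nabla_{(3)}A$ or $\nabla_{(4)}A$ survives outside a term reaching $\int_\Sigma|\nabla_{(4)}A|^2\gamma^s\,d\mu$ or $\int_\Sigma\nabla_{(2)}A*(P_3^4+P_5^2)(A)\,\gamma^s\,d\mu$, and that the only non-derivative polynomial that survives is $|A|^6$. I expect the delicate cases to be the mixed terms such as $\overline{\nabla_{(1)}\Rc}*\nabla_{(3)}A$, $\Rcn*A*\nabla_{(3)}A$ and $\Rcn*A^3*\nabla A$ contracted against $\nabla_{(2)}A$, where the distribution of derivative weight across the Young split must be chosen precisely; in each case this works because the exponents that emerge sit strictly below the critical order for $k=2$, so the interpolation machinery already used inside Lemma~\ref{LMenergyhighervanillaconstantricci} applies.
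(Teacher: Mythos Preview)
Your outline is correct and follows the paper's own proof essentially step for step: apply Lemma~\ref{LMenergyhighervanillaconstantricci} with $k=2$, split off the $p=5$ ambient contribution via Young to produce $\int_\Sigma|\overline{\nabla_{(5)}\Rc}|^2\gamma^s\,d\mu$, bound the remaining $|\overline{\nabla_{(p)}\Rc}|$ and $|\overline{\nabla_{(p)}(\Rc*\Rc)}|$ by constants depending on $\SK_0,\ldots,\SK_4$, and then reduce the resulting $\nabla_{(2)}A*|P_j^i|(A)$ terms by Young's inequality and the interpolation inequalities \eqref{LMevolutionequationhigherEQint1}, \eqref{LMevolutionequationhigherEQint2}. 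One point worth flagging explicitly in your write-up: among the terms produced by Young after bounding the ambient factors, a stray $\int_\Sigma|\nabla A|^4\gamma^s\,d\mu$ appears (it is not of the form $\nabla_{(2)}A*P_3^4(A)$ and is not purely subcritical either); the paper disposes of it by the separate integration-by-parts estimate \eqref{LMenergygrad2EQ4}, which bounds $\int_\Sigma|\nabla A|^4\gamma^s\,d\mu$ by $c\int_\Sigma|\nabla_{(2)}A|^2(1+|A|^4)\gamma^s\,d\mu+c\int_\Sigma|A|^6\gamma^s\,d\mu+c\int_\Sigma|A|^2\gamma^{s-8}\,d\mu$. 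This is exactly the ``integration by parts against $\nabla_{(2)}A$'' manoeuvre you allude to, so it fits within your scheme, but it is the one term whose handling is not immediate from the sentence-level description and should be singled out.
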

\begin{proof}
From Lemma \ref{LMenergyhighervanillaconstantricci} with $k=2$ we find
\begin{align}
\rD{}{t}\int_\Sigma&|\nabla_{(2)}A|^2\gamma^sd\mu
    + (2-\delta)\int_\Sigma |\nabla_{(4)}A|^2 \gamma^sd\mu
\notag
\\
&\le
    c\int_\Sigma |A|^2\gamma^{s-8}d\mu
\notag\\&\hskip+1cm
  + c\int_\Sigma \nabla_{(2)}A*\big(\overline{\nabla_{(5)}\Rc}
                                  + \overline{\nabla_{(4)}\Rc}*A
\notag \\&\hskip+2cm
                                  + \overline{\nabla_{(3)}\Rc}*A*A
                                  + \overline{\nabla_{(3)}\Rc}*\nabla A
                               \big)\,\gamma^sd\mu
\notag\\&\hskip+1cm
  + c\int_\Sigma \nabla_{(2)}A*\big(P_3^{4} + P_5^2 + |P_1^0|+|P_1^1|
\notag \\&\hskip+2cm
  + |P^2_1| + |P^3_1| + |P_1^4| + |P^0_2| + |P_2^1| + |P^2_2| + |P_2^3|
\notag \\&\hskip+2cm
  + |P_3^0| + |P^1_3| + |P_3^2| + |P_4^0| + |P_4^1| + |P^0_5|\big)(A)\,\gamma^sd\mu
\notag
\\
&\le
    c\int_\Sigma |A|^2\gamma^{s-8}d\mu
  + c\int_\Sigma |\overline{\nabla_{(5)}\Rc}|^2\gamma^sd\mu
\notag\\&\hskip+1cm
  + c\int_\Sigma \nabla_{(2)}A*\big(P_3^{4} + P_5^2 + |P_1^0|+|P_1^1|
\notag \\&\hskip+2cm
  + |P^2_1| + |P^3_1| + |P_1^4| + |P^0_2| + |P_2^1| + |P^2_2| + |P_2^3|
\notag \\&\hskip+2cm
  + |P_3^0| + |P^1_3| + |P_3^2| + |P_4^0| + |P_4^1| + |P^0_5|\big)(A)\,\gamma^sd\mu
\,.
\label{LMenergygrad2EQ0}
\end{align}
As in Lemma \ref{LMenergygrad1} above we use Young's inequality to estimate
\begin{align}
\notag
  c\int_\Sigma &\nabla_{(2)}A*\big(P_3^{4} + P_5^2 + |P_1^0|+|P_1^1|+|P^2_1|+|P^3_1|+|P_1^4|
\\&\quad
 +|P^0_2| + |P_2^1|+|P^2_2|+|P_2^3|+|P_3^0|+|P^1_3|+|P_3^2|
\notag\\&\quad\hskip+2cm
 +|P_4^0|+|P_4^1|+|P^0_5|\big)(A)\,\gamma^sd\mu
\notag\\
&\le
   c\int_\Sigma \nabla_{(2)} A*\big( P_3^4 + P_5^2 +
     |A| + |\nabla A| + |\nabla_{(2)}A| + |\nabla_{(3)}A| + |\nabla_{(4)}A|
\notag\\&\qquad\qquad
   + |A*A| + |A*\nabla A| + |\nabla A| * |\nabla A| + |A*\nabla_{(2)}A|
\notag \\&\qquad\qquad
   + |\nabla A|*|\nabla_{(2)}A|
   + |A*\nabla_{(3)}A|
   + |A*A*\nabla A|
   + |A*A*A|
\notag \\&\qquad\qquad
   + |A*A*\nabla_{(2)}A|
   + |A* \nabla A * \nabla A|
   + |A*A*A*A|
\notag \\&\qquad\qquad
   + |A*A*A*\nabla A| + |A*A*A*A*A|\big)(A)\,\gamma^sd\mu
\notag\\
&\le
   c\int_\Sigma \nabla_{(2)} A*\big(P_3^4 + P_5^2\big)(A)\,\gamma^sd\mu
 + \delta\int_\Sigma |\nabla_{(4)} A|^2\gamma^sd\mu
\notag\\&\qquad
 + c\int_\Sigma |\nabla_{(3)} A|^2\gamma^sd\mu
 + c\int_\Sigma |\nabla_{(2)} A|^2\gamma^sd\mu
 + c\int_\Sigma |\nabla A|^2\gamma^sd\mu
\notag\\&\qquad
 + c\int_\Sigma |\nabla_{(2)} A|^2\big(|A|^4 + |\nabla A|^2\big)\,\gamma^sd\mu
 + c\int_\Sigma |\nabla A|^4\gamma^sd\mu
\notag\\&\qquad
 + c\int_\Sigma |A|^2\gamma^sd\mu
 + c\int_\Sigma |A|^6\gamma^sd\mu
\,.
\label{LMenergygrad2EQ1}
\end{align}
As earlier we estimate
\begin{align}
 \int_\Sigma |\nabla_{(3)} A|^2\gamma^{s}d\mu
  &\le
      \delta\int_\Sigma |\nabla_{(4)} A|^2\gamma^sd\mu
    + c\int_\Sigma |A|^2\gamma^{s-6}d\mu
\,.
\label{LMenergygrad2EQ2}
\end{align}
Estimate \eqref{LMenergygrad2EQ2} deals with the third term on the right hand side of \eqref{LMenergygrad2EQ1}, while the fourth term on the right hand side of
\eqref{LMenergygrad2EQ1} is estimated by first applying \eqref{LMenergygrad1EQ3} and then using \eqref{LMenergygrad2EQ2}.
The fifth term is estimated by applying \eqref{LMenergygrad1EQ2}, then \eqref{LMenergygrad1EQ3} and finally \eqref{LMenergygrad2EQ2}.
Observe that the sixth term is of the form
\[
 \int_\Sigma |\nabla_{(2)} A|^2\big(|A|^4 + |\nabla A|^2\big)\,\gamma^sd\mu
 = \int_\Sigma \nabla_{(2)}A*\big(P_3^4+P_5^2\big)(A)\,\gamma^sd\mu\,.
\]
Choosing $\delta$ sufficiently small and absorbing in \eqref{LMenergygrad2EQ0} we find
\begin{align}
\rD{}{t}&\int_\Sigma|\nabla_{(2)}A|^2\gamma^sd\mu
    + (2-\delta)\int_\Sigma |\nabla_{(4)}A|^2 \gamma^sd\mu
\notag
\\*
&\le
   c\int_\Sigma \nabla_{(2)} A*\big(P_3^4 + P_5^2\big)(A)\,\gamma^sd\mu
 + c\int_\Sigma |A|^6\gamma^sd\mu
 + c\int_\Sigma |A|^2\gamma^{s-8}d\mu
\notag\\&\qquad
 + c\int_\Sigma |\nabla A|^4\gamma^sd\mu
\,.
\label{LMenergygrad2EQ3}
\end{align}
For the last term we integrate by parts and estimate using Young's inequality repeatedly to obtain
\begin{align*}
\int_\Sigma|\nabla A|^4\gamma^sd\mu
 &= \int_\Sigma \nabla_{(2)}A*A*\nabla A*\nabla A\,\gamma^sd\mu
  + \int_\Sigma \nabla \gamma * A * \nabla A * \nabla A\,\gamma^{s-1}d\mu
\notag\\&\le
   \frac14\int_\Sigma|\nabla A|^4\gamma^sd\mu
  + c\int_\Sigma |\nabla_{(2)}A|^2(1+|A|^4)\gamma^sd\mu
\notag\\&\qquad
  + \int_\Sigma \nabla \gamma * A * \nabla A * \nabla A * \nabla A\,\gamma^{s-1}d\mu
\notag\\&\le
   \frac12\int_\Sigma|\nabla A|^4\gamma^sd\mu
  + c\int_\Sigma |\nabla_{(2)}A|^2(1+|A|^4)\gamma^sd\mu
\notag\\&\qquad
  + c\int_\Sigma |\nabla A|^2|A|^2\,\gamma^{s-2}d\mu
\notag\\&\le
   \frac34\int_\Sigma|\nabla A|^4\gamma^sd\mu
  + c\int_\Sigma |\nabla_{(2)}A|^2(1+|A|^4)\gamma^sd\mu
\notag\\&\qquad
  + c\int_\Sigma |A|^4\,\gamma^{s-4}d\mu
\notag\\&\le
   \frac34\int_\Sigma|\nabla A|^4\gamma^sd\mu
  + c\int_\Sigma |\nabla_{(2)}A|^2(1+|A|^4)\gamma^sd\mu
\notag\\&\qquad
  + c\int_\Sigma |A|^6\,\gamma^{s}d\mu
  + c\int_\Sigma |A|^2\,\gamma^{s-8}d\mu\,.
\end{align*}
Absorbing yields
\begin{align}
\int_\Sigma|\nabla A|^4\gamma^sd\mu
 &\le c\int_\Sigma |\nabla_{(2)}A|^2(1+|A|^4)\gamma^sd\mu
\notag\\&\qquad
  + c\int_\Sigma |A|^6\,\gamma^{s}d\mu
  + c\int_\Sigma |A|^2\,\gamma^{s-8}d\mu\,.
\label{LMenergygrad2EQ4}
\end{align}
Combining \eqref{LMenergygrad2EQ4} with \eqref{LMenergygrad2EQ3} and absorbing with estimates \eqref{LMenergygrad1EQ3} and then \eqref{LMenergygrad2EQ2} finishes the proof.
\end{proof}

To deal with the sixth term on the right hand side of \eqref{LMenergygrad1EQ1} we have to apply the Hoffman-Spruck
Sobolev inequality, which takes various forms depending on the geometry of $(N,\nIP{\cdot}{\cdot})$. This is done in the
next section.


\section{Integral estimates with small concentration of curvature.}

We will primarily use the Hoffman-Spruck Sobolev inequality \cite{hoffmanspruck}, which is the famous Michael-Simon
Sobolev inequality \cite{michael1973sam} adapted to submanifolds of Riemannian spaces.
We state here a version of \cite[Theorem 2.1]{hoffmanspruck} which is tailored to our situation:

\begin{thm}[Hoffman-Spruck Sobolev inequality for solutions of \eqref{EQwf}]
\label{TMhs}
Let $f:\Sigma\times[0,T)\rightarrow N$ be a family of immersed surfaces and $u\in C_c^1(\Sigma\times[0,T))$.
Assume that
\begin{equation}
\SK :=
\sup_{p\in N,\,X,Y\in T_pN} \Rcn(X,Y)(p)
\le \frac{8\pi}{9|\Sigma|_{[u>0]}}
\,,
\label{EQambientcurvass}
\end{equation}
and
\begin{equation}
\rho_N \ge
\begin{cases}
\frac{1}{2\sqrt\SK} \sin^{-1}\Big( \sqrt{9\SK|\Sigma|_{[u>0]}/4\pi} \Big)
\,, \text{ if $\SK\ge0$,}
\\
\frac34\sqrt{|\Sigma|_{[u>0]}/\pi}
\qquad\qquad\ \quad\qquad \text{ if $\SK<0$,}
\end{cases}
\label{EQambientinjass}
\end{equation}
where $\rho_N$ is the injectivity radius of $N$.
Then we have
\[
  \left(\int_\Sigma |u|^2d\mu\right)^{1/2}
    \le \frac{9\sqrt{\pi}}{2} \int_\Sigma |\nabla u| + |u|\ |H|\, d\mu\,.
\]
\end{thm}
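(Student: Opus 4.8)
The plan is to obtain this inequality as a direct corollary of the Hoffman--Spruck Sobolev inequality \cite[Theorem 2.1]{hoffmanspruck}, specialised to $2$-dimensional submanifolds and applied slice by slice in time. Since $u\in C_c^1(\Sigma\times[0,T))$ and $\Sigma$ is closed, for each $t\in[0,T)$ the function $u(\cdot,t)$ lies in $C_c^{0,1}(\Sigma)$; because $|\nabla |u||=|\nabla u|$ almost everywhere we may replace $u$ by $|u|$ and assume $u\ge 0$, with the size of the support measured by the area $|\Sigma|_{[u(\cdot,t)>0]}$. It thus suffices to establish the static inequality on $(\Sigma,f(\cdot,t)^*\nIP{\cdot}{\cdot})$, regarded as a surface immersed in $N^3$ with mean curvature vector $H\nu$, whose length is $|H|$.

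Next I would quote Hoffman and Spruck's theorem in the form needed: for an $m$-dimensional submanifold $M$, a nonnegative $v\in C_c^{0,1}(M)$, and a parameter $\alpha\in(0,1)$,
\[
\Big(\int_M v^{m/(m-1)}\,d\mu\Big)^{(m-1)/m}
 \le c(m,\alpha)\int_M \big(|\nabla v|+v\,|\vec H|\big)\,d\mu\,,
\]
provided an upper curvature bound $b^2\in\R$ for the ambient space and its injectivity radius satisfy two structural smallness conditions, each written in terms of $\omega_m^{-1}|M|_{[v>0]}$ and bifurcating according to the sign of $b^2$, the ambient curvature bound required being furnished by $\SK$. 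Setting $m=2$ the integrability exponent is exactly $2$ and $\omega_2=\pi$: the curvature hypothesis becomes an upper bound on $\SK$ by a fixed multiple of $1/|\Sigma|_{[u>0]}$, and the injectivity-radius hypothesis splits into the $\SK\ge 0$ branch (real $b$, producing the $\sin^{-1}$ expression) and the $\SK<0$ branch (imaginary $b$, for which the flat bound $\tfrac34\sqrt{|\Sigma|_{[u>0]}/\pi}$ suffices). Choosing $\alpha$ to be the value that makes the curvature threshold equal to $\tfrac{8\pi}{9|\Sigma|_{[u>0]}}$ recovers exactly \eqref{EQambientcurvass} and \eqref{EQambientinjass}, and for this $\alpha$ the Hoffman--Spruck constant evaluates to $c(2,\alpha)=\tfrac{9\sqrt\pi}{2}$. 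Applying the resulting inequality at every $t\in[0,T)$, and recalling that $t$ was arbitrary, gives the claim.

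There is no genuine analytic obstacle beyond what is already in \cite{hoffmanspruck}; the one thing requiring care is the bookkeeping of constants — verifying that the distinguished choice of $\alpha$ turns the Hoffman--Spruck hypotheses into \eqref{EQambientcurvass}--\eqref{EQambientinjass} on the nose, that the two injectivity-radius cases correspond precisely to $\SK\ge0$ and $\SK<0$, and that the dimensional constant works out to $\tfrac{9\sqrt\pi}{2}$ when $m=2$. One should also reconcile the conventions (the paper's $H$ is the full trace $g^{ij}A_{ij}$, and $|\vec H|=|H|$ in codimension one) and confirm the reduction from nonnegative $v$ to arbitrary $u$ via $|u|$, which is legitimate since $|\nabla |u||=|\nabla u|$ a.e.
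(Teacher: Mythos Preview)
Your proposal is correct and matches the paper's approach: the paper does not prove this theorem at all but simply states it as a tailored version of \cite[Theorem 2.1]{hoffmanspruck}, so your derivation by specialising the Hoffman--Spruck inequality to $m=2$ with a particular choice of the free parameter $\alpha$ is exactly what is intended (and in fact supplies more detail than the paper itself).
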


\begin{rmk}
If $N$ is simply connected, complete, and with non-positive sectional curvature, the condition \eqref{EQambientcurvass}
is automatically satisfied regardless of the value of $|\Sigma|$ along the flow.  Furthermore, by the Cartan-Hadamard
theorem the injectivity radius $\rho_N = \infty$ on such a manifold, and so \eqref{EQambientinjass} is also satisfied.
This means that the assumption \eqref{EQambeintassumptionsforlifespan} implies that we may apply Theorem \ref{TMhs}
along the flow for any differentiable test function $u$ with compact support at any time.
\end{rmk}

A straightforward consequence of Theorem \ref{TMhs} is the following multiplicative Sobolev inequality.
With Theorem \ref{TMhs} in hand, the proof follows the same argument as in \cite[Lemma 4.2]{kuwert2002gfw}, and so we omit it.

\begin{lem}
\label{LMmultsob}
Let $f:\Sigma\times[0,T)\rightarrow N$ be a family of immersions and let $\gamma$ be as in \eqref{EQgamma} satisfying
the assumptions \eqref{EQambientcurvass}, \eqref{EQambientinjass} of Theorem \ref{TMhs} with $u=\gamma$.  If $s\ge4$
then we have
\begin{align*}
\int_\Sigma \big(|\nabla A|^2|A|^2 + |A|^6\big)\gamma^sd\mu
   &\le  c\int_{[\gamma>0]}|A|^2d\mu\int_\Sigma\big(|\nabla_{(2)}A|^2 + |A|^6\big)\gamma^sd\mu
   \\*
   &\hskip+2cm + c(\cg)^4\Big( \int_{[\gamma>0]}|A|^2d\mu \Big)^2,
\end{align*}
where $c$ is a constant depending only on $s$.
\end{lem}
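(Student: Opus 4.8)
<br>

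The goal is to prove Lemma \ref{LMmultsob}, the multiplicative Sobolev inequality bounding $\int_\Sigma(|\nabla A|^2|A|^2 + |A|^6)\gamma^s\,d\mu$ in terms of the small concentration quantity $\int_{[\gamma>0]}|A|^2\,d\mu$ times higher-order curvature energy, plus a lower-order cutoff term. The plan is to apply Theorem \ref{TMhs} (the Hoffman--Spruck Sobolev inequality) to carefully chosen test functions built from $|A|$ and $\gamma$, exactly as in \cite[Lemma 4.2]{kuwert2002gfw}, and then Cauchy--Schwarz/Young to separate the factor carrying the small $L^2$-concentration from the factor carrying curvature derivatives.

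The key steps, in order: First, I would handle the $|A|^6$ term. Write $\int_\Sigma |A|^6\gamma^s\,d\mu = \int_\Sigma \big(|A|^3\gamma^{s/2}\big)^2\,d\mu$ and apply Theorem \ref{TMhs} with $u = |A|^3\gamma^{s/2}$ (first noting $u$ can be approximated by $C_c^1$ functions, or using that $|A|$ is smooth on $\Sigma$ and $\gamma$ is a cutoff). Squaring the resulting inequality gives $\int_\Sigma|A|^6\gamma^s\,d\mu \le c\big(\int_\Sigma |\nabla(|A|^3\gamma^{s/2})| + |A|^3\gamma^{s/2}|H|\,d\mu\big)^2$. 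Expanding the integrand using $|\nabla|A|| \le |\nabla A|$, the product rule on $\gamma^{s/2}$ (which costs $\cg\gamma^{s/2-1}$, using \eqref{EQgamma}), and $|H| \le \sqrt2|A|$, one gets terms like $|A|^2|\nabla A|\gamma^{s/2}$, $|A|^3\gamma^{s/2-1}\cg$, and $|A|^4\gamma^{s/2}$. Then apply Cauchy--Schwarz to each squared integral so that one factor is $\big(\int_{[\gamma>0]}|A|^2\,d\mu\big)^{1/2}$ and the other factor involves $|\nabla A|^2|A|^2$, $|A|^6$, or (after using $s\ge4$ so that $\gamma^{s-2}\le\gamma^{s-4}$ etc.\ is controlled by $\cg^2$ or $\cg^4$) the lower-order term $\cg^4\int_{[\gamma>0]}|A|^2\,d\mu$. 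Young's inequality then turns a product $\big(\int|A|^2\big)^{1/2}\cdot(\cdots)^{1/2}$ of two quantities into the stated multiplicative form.

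Second, for the $\int_\Sigma|\nabla A|^2|A|^2\gamma^s\,d\mu$ term I would apply Theorem \ref{TMhs} with $u = |\nabla A|\,|A|\,\gamma^{s/2}$, again squaring. Expanding $\nabla\big(|\nabla A|\,|A|\,\gamma^{s/2}\big)$ produces $|\nabla_{(2)}A|\,|A|\,\gamma^{s/2}$, $|\nabla A|^2\gamma^{s/2}$, and $|\nabla A|\,|A|\,\cg\gamma^{s/2-1}$, plus the $|H|$-term $|\nabla A|\,|A|^2\gamma^{s/2}$. After Cauchy--Schwarz extracting $\big(\int_{[\gamma>0]}|A|^2\,d\mu\big)^{1/2}$ from one factor, the remaining factor contains $\int_\Sigma|\nabla_{(2)}A|^2\gamma^s$, $\int_\Sigma|\nabla A|^4\gamma^s$, $\int_\Sigma|\nabla A|^2|A|^2\gamma^s$, and $\cg^2\int_\Sigma|\nabla A|^2\gamma^{s-2}$. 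The term $\int_\Sigma|\nabla A|^4\gamma^s$ is absorbed using $\int_\Sigma|\nabla A|^4\gamma^s\,d\mu \le \frac14\int_\Sigma|\nabla A|^2|A|^2\gamma^s\,d\mu + c\int_\Sigma|\nabla_{(2)}A|^2\gamma^s\,d\mu + (\text{lower order})$ (integration by parts, as used later in the proof of Lemma \ref{LMenergygrad2}), and the $\cg^2\int|\nabla A|^2\gamma^{s-2}$ term is controlled by interpolation of the type \eqref{LMevolutionequationhigherEQint1}--\eqref{LMevolutionequationhigherEQint2} (with $k=1$, i.e.\ $\int|\nabla A|^2\gamma^{s-2}\le\delta\int|\nabla_{(2)}A|^2\gamma^s + c\int|A|^2\gamma^{s-6}$, then $\gamma\le1$ and $s\ge4$), so that everything lands either in $\int_{[\gamma>0]}|A|^2\,d\mu\cdot\int_\Sigma(|\nabla_{(2)}A|^2+|A|^6)\gamma^s\,d\mu$ or in $\cg^4\big(\int_{[\gamma>0]}|A|^2\,d\mu\big)^2$. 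Finally, collect all contributions, absorb the $|A|^6$ and $|\nabla A|^2|A|^2$ terms that reappear on the right with small coefficients (justified because the prefactor $\int_{[\gamma>0]}|A|^2\,d\mu$ is at our disposal — but actually one does not need smallness of $\varepsilon$ here since these absorbed terms come with the multiplicative factor and Young's constant can be chosen freely), and rename constants to obtain the claimed estimate.

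The main obstacle I anticipate is the bookkeeping in the Cauchy--Schwarz splittings: one must consistently arrange that exactly two powers of $|A|$ (and no derivatives) go into the $\int_{[\gamma>0]}|A|^2\,d\mu$ factor while the rest, together with the correct power $\gamma^s$, goes into the high-energy factor, and simultaneously track the powers of $\cg$ and of $\gamma$ so that the residual cutoff terms collapse to $(\cg)^4\big(\int_{[\gamma>0]}|A|^2\,d\mu\big)^2$ — this is where the hypothesis $s\ge4$ is used, via $\gamma^{s-k}\le\gamma^{s-4}\chi_{[\gamma>0]}\le\chi_{[\gamma>0]}$ combined with $\cg\le 1$ or by pulling out the appropriate power of $\cg$. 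Since the argument is by now standard and is spelled out in \cite[Lemma 4.2]{kuwert2002gfw}, with the only new feature being the presence of the $|H|\le\sqrt2|A|$ term from the Riemannian Sobolev inequality (which behaves exactly like the $|\nabla|A||$ term for these purposes), I would present only the two test-function choices and the resulting splitting, referring to \cite{kuwert2002gfw} for the routine absorption, which is essentially what the paper already does.
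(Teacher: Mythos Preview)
Your overall strategy---apply the Hoffman--Spruck inequality to $u=|A|^3\gamma^{s/2}$ and $u=|\nabla A|\,|A|\,\gamma^{s/2}$, then split off $\int_{[\gamma>0]}|A|^2$ via Cauchy--Schwarz---is exactly the argument the paper has in mind (it simply cites \cite[Lemma~4.2]{kuwert2002gfw} and omits the proof). Two of your detailed steps, however, do not work as written.

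First, the term $\int_\Sigma|\nabla A|^2\gamma^{s/2}\,d\mu$ contains no factor of $|A|$, so Cauchy--Schwarz cannot extract $E^{1/2}:=\big(\int_{[\gamma>0]}|A|^2\big)^{1/2}$ from it; your claimed residual $\int|\nabla A|^4\gamma^s$ would come from pairing with $\chi_{[\gamma>0]}$, which extracts the \emph{area} of the support, not $E$. The correct move is to integrate by parts first,
\[
\int_\Sigma|\nabla A|^2\gamma^{s/2}\,d\mu
\le \int_\Sigma|A|\,|\nabla_{(2)}A|\,\gamma^{s/2}\,d\mu
   + c\cg\int_\Sigma|A|\,|\nabla A|\,\gamma^{s/2-1}\,d\mu,
\]
after which Cauchy--Schwarz does produce the factor $E^{1/2}$; the leftover $\int|\nabla A|^2\gamma^{s-2}$ is handled by iterating the same device (and here $s\ge4$ is used so that $2(s-2)\ge s$). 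No $|\nabla A|^4$ term is needed.

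Second, your handling of the $H$-term $\int|\nabla A|\,|A|^2\gamma^{s/2}$ by Cauchy--Schwarz gives $E^{1/2}L^{1/2}$ with $L=\int|\nabla A|^2|A|^2\gamma^s$, and after squaring this leaves $cE\cdot L$ on the right. The coefficient $cE$ is \emph{fixed}, not a free Young parameter, so your parenthetical claim that ``Young's constant can be chosen freely'' is incorrect: absorbing would require $E$ small, which the lemma does not assume. The fix is to split this term the other way,
\[
\int_\Sigma|\nabla A|\,|A|^2\gamma^{s/2}\,d\mu
\le \Big(\int_\Sigma|\nabla A|^2\gamma^{s/2}\,d\mu\Big)^{1/2}
    \Big(\int_\Sigma|A|^4\gamma^{s/2}\,d\mu\Big)^{1/2},
\]
bound the first factor as in the previous paragraph, and the second by $E^{1/2}\big(\int|A|^6\gamma^s\big)^{1/2}$. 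This routes the $H$-contribution into the $\int|A|^6\gamma^s$ term already allowed on the right-hand side of the lemma, and no absorption into $L$ is needed. With these two corrections your outline is complete and matches Kuwert--Sch\"atzle.
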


For solutions in simply-connected 3-manifolds with non-positive Ricci curvature, we immediately obtain our desired control of the concentration of curvature
along the flow.

\begin{prop}[Control on the concentration of curvature]
\label{PRconcentestvanilla}
Suppose $(N,\nIP{\cdot}{\cdot})$ is smooth.  Let $f:\Sigma\times[0,T)\rightarrow N$ be a solution of \eqref{EQwf} and
let $\gamma$ be as in \eqref{EQgamma} satisfying the assumptions \eqref{EQambientcurvass} and \eqref{EQambientinjass} of
Theorem \ref{TMhs} with $u=\gamma$.
There is an $\veZero$ depending only on $\cg$
and $\SK_i$ for $i\in\{0,1,2\}$ such that if
\begin{equation}
\label{EQconcentass}
\varepsilon = \sup_{[0,T)}\int_{[\gamma>0]}|A|^2d\mu
            \le \veZero
\end{equation}
then for any $t\in[0,T)$ we have
\begin{equation}
\label{EQconcentfirstest}
  \begin{split}
  &\int_{[\gamma=1]} |A|^2 d\mu\bigg|_{\tau=t}
 + \int_0^t\int_{[\gamma=1]} \big(|\nabla_{(2)}A|^2 + |\nabla A|^2|A|^2 + |A|^6\big)\,d\mu\,d\tau \\
&\qquad\qquad\qquad
 \le \int_{[\gamma > 0]} |A|^2 d\mu\bigg|_{\tau=0}
 + c_1t\bigg(
              \sup_{[0,T)}\int_{[\gamma>0]}\big(|A|^2 + |\overline{\nabla_{(3)}\Rc}|^2\big)\,d\mu
       \bigg)\,,
  \end{split}
\end{equation}
where $c_1\in(0,\infty)$ is a constant depending only on $\cg$ and $\SK_i$ for $i\in\{0,1,2\}$.
\end{prop}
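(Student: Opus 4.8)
The plan is to combine the basic energy estimate of Lemma \ref{LMenergybasic} with the multiplicative Sobolev inequality of Lemma \ref{LMmultsob} to absorb the dangerous cubic-in-curvature terms, and then integrate in time. First I would apply Lemma \ref{LMenergybasic} with $\gamma^4$ and a fixed small $\delta$, say $\delta = \frac12$, obtaining
\[
\rD{}{t}\int_\Sigma |A|^2\gamma^4d\mu
 + \tfrac32\int_\Sigma |\nabla_{(2)}A|^2\gamma^4d\mu
\le c\int_{[\gamma>0]}|A|^2d\mu
 + c\int_\Sigma|\overline{\nabla_{(3)}\Rc}|^2\gamma^4d\mu
 + c\int_\Sigma\big(|A|^6 + |\nabla A|^2|A|^2 + |\nabla A|^2\big)\gamma^4d\mu.
\]
The term $c\int |\nabla A|^2\gamma^4$ is handled by the standard interpolation $\int|\nabla A|^2\gamma^4 \le \delta\int|\nabla_{(2)}A|^2\gamma^4 + c_\delta\int_{[\gamma>0]}|A|^2$, which lets it be absorbed into the good second term on the left and into the first term on the right. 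For the genuinely quadratic-concentration terms $|A|^6 + |\nabla A|^2|A|^2$, apply Lemma \ref{LMmultsob} with $s=4$: these are bounded by
\[
c\varepsilon\int_\Sigma\big(|\nabla_{(2)}A|^2 + |A|^6\big)\gamma^4d\mu + c(\cg)^4\varepsilon^2,
\]
using the hypothesis $\int_{[\gamma>0]}|A|^2d\mu \le \varepsilon$ at every time.

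Now I would choose $\veZero$ small enough (depending only on $\cg$ and $\SK_i$, $i\in\{0,1,2\}$, through the constant $c$ of Lemma \ref{LMenergybasic} and the constant of Lemma \ref{LMmultsob}) so that $c\varepsilon \le \tfrac12$; this permits absorbing $c\varepsilon\int|\nabla_{(2)}A|^2\gamma^4$ into the left-hand side and absorbing $c\varepsilon\int|A|^6\gamma^4$ — note $\varepsilon\le\veZero$ makes the coefficient on $\int|A|^6\gamma^4$ as small as we like. This leaves a differential inequality of the form
\[
\rD{}{t}\int_\Sigma |A|^2\gamma^4d\mu
 + \int_\Sigma |\nabla_{(2)}A|^2\gamma^4d\mu
 + \int_\Sigma\big(|\nabla A|^2|A|^2 + |A|^6\big)\gamma^4d\mu
\le c_1\int_{[\gamma>0]}\big(|A|^2 + |\overline{\nabla_{(3)}\Rc}|^2\big)d\mu,
\]
where on the right I have replaced the $\gamma^4$-weighted Ricci term by its sup over $[\gamma>0]$ and absorbed the $c(\cg)^4\varepsilon^2$ into the $\int_{[\gamma>0]}|A|^2$ term (since $\varepsilon = \sup\int_{[\gamma>0]}|A|^2$, so $\varepsilon^2 \le \veZero\,\varepsilon$). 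The constant $c_1$ depends only on $\cg$ and $\SK_i$ for $i\in\{0,1,2\}$, as required.

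Finally I would integrate this inequality over $[0,t]$. Since $\gamma=1$ on $[\gamma=1]$ and $0\le\gamma\le1$, the left side dominates
\[
\int_{[\gamma=1]}|A|^2d\mu\Big|_{\tau=t} - \int_{[\gamma>0]}|A|^2d\mu\Big|_{\tau=0}
 + \int_0^t\int_{[\gamma=1]}\big(|\nabla_{(2)}A|^2 + |\nabla A|^2|A|^2 + |A|^6\big)d\mu\,d\tau,
\]
while the right side is bounded by $c_1 t\,\sup_{[0,T)}\int_{[\gamma>0]}(|A|^2 + |\overline{\nabla_{(3)}\Rc}|^2)d\mu$. Rearranging gives exactly \eqref{EQconcentfirstest}. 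The main obstacle is the absorption bookkeeping in the second paragraph: one must verify that the smallness threshold $\veZero$ can indeed be fixed using only the already-named quantities, and that every $\cg$-dependent error (from $|\nabla_{(2)}\gamma|\le\cg(\cg+|A|)$ and the interpolation steps) either absorbs into the left side or lands in the $\int_{[\gamma>0]}|A|^2$ term rather than producing a weight like $\gamma^2$ or $\gamma^0$ that cannot be controlled at time $0$ by the stated right-hand side — here the extra room in the $\gamma^4$ weight and the $s\ge 4$ hypothesis of Lemma \ref{LMmultsob} are exactly what make this work.
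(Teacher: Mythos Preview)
Your proof is correct and follows essentially the same route as the paper's: start from Lemma~\ref{LMenergybasic} with $\delta=\tfrac12$, interpolate away the lone $|\nabla A|^2$ term, apply Lemma~\ref{LMmultsob} to bound $|A|^6+|\nabla A|^2|A|^2$ by $c\varepsilon\int(|\nabla_{(2)}A|^2+|A|^6)\gamma^4$, absorb using $\varepsilon\le\veZero$ small, and integrate in time. The only cosmetic difference is that the paper makes the add-to-both-sides step (putting $|\nabla A|^2|A|^2+|A|^6$ on the left before applying Lemma~\ref{LMmultsob}) explicit, whereas you leave it implicit in the phrase ``absorbing $c\varepsilon\int|A|^6\gamma^4$''; either way the bookkeeping closes.
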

\begin{proof}
Using Lemma \ref{LMmultsob} we estimate the right hand side of the energy estimate Lemma
\ref{LMenergybasic} as follows.
\begin{align*}
\rD{}{t}&\int_\Sigma |A|^2\gamma^4d\mu
     + \int_\Sigma \big(|\nabla_{(2)}A|^2 + |\nabla A|^2|A|^2 + |A|^6\big)\gamma^4d\mu
     + (1-\delta_0)\int_\Sigma |\nabla_{(2)}A|^2\gamma^4d\mu
\\
 &\le
       c\int_{[\gamma>0]}|A|^2d\mu
   + c\int_\Sigma \big(|\nabla A|^2|A|^2 + |A|^6\big)\gamma^4d\mu
   + c\int_\Sigma|\overline{\nabla_{(3)}\Rc}|^2\,\gamma^4d\mu
\\
 &\le c_0\varepsilon\int_\Sigma \big(|\nabla_{(2)}A|^2 + |A|^6\big)\gamma^4d\mu
     + c(1+\varepsilon)\int_{[\gamma>0]}|A|^2d\mu
   + c\int_\Sigma|\overline{\nabla_{(3)}\Rc}|^2\,\gamma^4d\mu
\,.
\end{align*}
Choosing $\delta_0 = \frac12$, clearly for $\varepsilon$ smaller than $\frac1{2c_0}$ we may absorb the
first term on the left and obtain
\begin{align*}
\rD{}{t}\int_\Sigma |A|^2\gamma^4d\mu
     &+ \int_\Sigma \big(|\nabla_{(2)}A|^2 + |\nabla A|^2|A|^2 + |A|^6\big)\gamma^4d\mu
\\
 &\le
      c\int_{[\gamma>0]}|A|^2d\mu
    + c\int_\Sigma|\overline{\nabla_{(3)}\Rc}|^2\,\gamma^4d\mu
\,.
\end{align*}
Integrating the above differential inequality yields the desired estimate \eqref{EQconcentfirstest}.
\end{proof}

We use Lemma \ref{LMenergygrad1} and Lemma \ref{LMenergygrad2} to improve this to pointwise control.

\begin{prop}
\label{PRbootstrappingfirststep}
Suppose $(N,\nIP{\cdot}{\cdot})$ is smooth, let $f:\Sigma\times[0,T)\rightarrow N$ be a solution to \eqref{EQwf} and let
$\gamma$ be as in \eqref{EQgamma} such that the assumptions \eqref{EQambientcurvass} and \eqref{EQambientinjass} of
Theorem \ref{TMhs} with $u=\gamma$ are satisfied.  There is an $\veOne>0$ and $s_1$ depending only on $\cg$ and $\SK_i$
for $i\in\{0,1,2,3\}$ such that if
\begin{equation*}
\varepsilon = \sup_{[0,T)}\int_{[\gamma>0]}|A|^2d\mu
            \le \veOne \le \veZero
\end{equation*}
then for any $t\in[0,T)$ we have
\begin{align*}
\int_\Sigma&|\nabla A|^2\gamma^{s_1}d\mu\bigg|_{t=\tau}
\\
    &+ \int_0^t\int_\Sigma \big(|\nabla_{(3)}A|^2 + |\nabla_{(2)}A|^2|A|^2 + |\nabla A|^4 + |\nabla A|^2|A|^4 +
                                |A|^6\big)\gamma^{s_1}d\mu d\tau
\notag\\&\quad\le
  c_2 + c_2t\bigg(
              \sup_{[0,T)}\int_{[\gamma>0]}\big(|A|^2 + |\overline{\nabla_{(4)}\Rc}|^2\big)\,d\mu
        \bigg)
\,,
\end{align*}
where $c_2$ is a constant depending only on $\cg$, $\SK_i$ for $i\in\{0,1,2,3\}$, $\vn{A}^2_{2,[\gamma>0]}\big|_{t=0}$,
and $\vn{\nabla A}^2_{2,[\gamma>0]}\big|_{t=0}$.
\end{prop}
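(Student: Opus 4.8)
The plan is to run a bootstrapping argument in the spirit of Proposition \ref{PRconcentestvanilla}, but now one derivative higher, using Lemma \ref{LMenergygrad1} as the driving energy inequality with $s=s_1$ to be chosen and $\gamma$ replaced by a suitable power so that all boundary/cutoff error terms land in a set where we already have control. The first step is to apply Lemma \ref{LMenergygrad1} directly; this produces, on the right hand side, the two ``bad'' families of terms $\int_\Sigma \nabla A*\big(P_3^3 + P_5^1\big)(A)\,\gamma^{s_1}d\mu$, together with the harmless terms $c\int_\Sigma|A|^2\gamma^{s_1-6}d\mu$, $c\int_\Sigma|\overline{\nabla_{(4)}\Rc}|^2\gamma^{s_1}d\mu$ and $c\int_\Sigma|A|^6\gamma^{s_1}d\mu$. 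The middle two are immediately controlled by the hypothesised supremum bound (after noting $\gamma\le1$, so $\gamma^{s_1-6}\le\gamma^4$ once $s_1\ge10$, and $[\gamma>0]$ contains the support), and the $|A|^6$ term is exactly of the type handled by Proposition \ref{PRconcentestvanilla}. So the work is entirely in the $P_3^3(A)$ and $P_5^1(A)$ terms.

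The second step is to expand $\nabla A * P_3^3(A)$ and $\nabla A * P_5^1(A)$ into their constituent monomials and reduce each to an integrand of the schematic form $(\text{highest derivative})\cdot(\text{lower order})\cdot(\text{cutoff})$, then apply Young's inequality so that the genuinely top-order piece $|\nabla_{(3)}A|^2\gamma^{s_1}$ appears with a small coefficient $\delta$ and can be absorbed into the left hand side of Lemma \ref{LMenergygrad1} (which carries $\tfrac32\int_\Sigma|\nabla_{(3)}A|^2\gamma^{s_1}d\mu$). What remains after this absorption is a collection of terms of the form $\int_\Sigma|\nabla_{(2)}A|^2|A|^2\gamma^{s_1}d\mu$, $\int_\Sigma|\nabla A|^4\gamma^{s_1}d\mu$, $\int_\Sigma|\nabla A|^2|A|^4\gamma^{s_1}d\mu$, and $\int_\Sigma|A|^6\gamma^{s_1}d\mu$ — precisely the integrand appearing inside the time integral in the statement — plus lower order curvature-cutoff terms $\int_\Sigma|\nabla_{(2)}A|^2\gamma^{s_1-2}d\mu$, $\int_\Sigma|\nabla A|^2\gamma^{s_1-\ast}d\mu$ and $\int_\Sigma|A|^2\gamma^{s_1-\ast}d\mu$. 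The lower order cutoff terms are dispatched by the interpolation inequalities already proved inside Lemmas \ref{LMenergygrad1} and \ref{LMenergygrad2} (estimates \eqref{LMenergygrad1EQ2}, \eqref{LMenergygrad1EQ3}, \eqref{LMenergygrad2EQ2}), which trade a factor of $\gamma^{-2}$ for a derivative at the cost of $\delta\int|\nabla_{(3)}A|^2\gamma^{s_1}d\mu$ and a multiple of $\int|A|^2\gamma^{s_1-6}d\mu$; this forces $s_1$ to be taken large enough (some explicit constant, say $s_1\ge 2k+4$ at each step, so $s_1\ge 10$ suffices here) that all shifted cutoff powers stay nonnegative.

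The crux is the term $\int_\Sigma|\nabla_{(2)}A|^2|A|^2\gamma^{s_1}d\mu$, which is not directly small: here I would invoke the multiplicative Sobolev inequality Lemma \ref{LMmultsob}, applied to $|\nabla_{(2)}A|^2$ in place of $|A|^2$ (more precisely, to an appropriate cutoff of $\nabla_{(2)}A$ with a power of $\gamma$), exactly as in Proposition \ref{PRconcentestvanilla}: this bounds $\int|\nabla_{(2)}A|^2|A|^2\gamma^{s_1}d\mu$ and $\int|\nabla A|^4\gamma^{s_1}d\mu$ by $c\,\varepsilon\int(|\nabla_{(3)}A|^2+\cdots)\gamma^{s_1}d\mu + c(\cg)^4(\cdots)$ provided the curvature concentration $\varepsilon$ on $[\gamma>0]$ is below a threshold $\veOne\le\veZero$. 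Thus, for $\varepsilon$ small enough, this entire family is absorbed into the left hand side and into the already-controlled quantities; the residual uses Proposition \ref{PRconcentestvanilla} itself to bound $\int_0^t\int_{[\gamma>0]}(|\nabla_{(2)}A|^2+|\nabla A|^2|A|^2+|A|^6)\,d\mu\,d\tau$ in terms of $t$, the supremum of $\int_{[\gamma>0]}(|A|^2+|\overline{\nabla_{(3)}\Rc}|^2)d\mu$, and the initial data. The final step is to assemble a single differential inequality of the form
\[
\rD{}{t}\int_\Sigma|\nabla A|^2\gamma^{s_1}d\mu + \int_\Sigma\big(|\nabla_{(3)}A|^2 + |\nabla_{(2)}A|^2|A|^2 + |\nabla A|^4 + |\nabla A|^2|A|^4 + |A|^6\big)\gamma^{s_1}d\mu \le c_2\Big(\sup_{[0,T)}\int_{[\gamma>0]}(|A|^2+|\overline{\nabla_{(4)}\Rc}|^2)\,d\mu\Big),
\]
and integrate it in time from $0$ to $t$, the constant of integration $c_2$ picking up the dependence on $\vn{A}^2_{2,[\gamma>0]}|_{t=0}$ and $\vn{\nabla A}^2_{2,[\gamma>0]}|_{t=0}$. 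I expect the main obstacle to be bookkeeping: ensuring every monomial produced by expanding $P_3^3$ and $P_5^1$ is genuinely of the form treated above (in particular that no monomial has too many derivatives on a single factor to be handled by a single integration by parts plus Young), and tracking the cutoff exponents through the interpolation steps so that $s_1$ can be fixed once and for all.
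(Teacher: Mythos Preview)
Your outline is essentially the paper's proof: start from Lemma \ref{LMenergygrad1}, expand $\nabla A*(P_3^3+P_5^1)(A)$ and peel off $\delta\int|\nabla_{(3)}A|^2\gamma^{s}d\mu$ via Young, then show each of $\int|\nabla_{(2)}A|^2|A|^2\gamma^sd\mu$, $\int|\nabla A|^4\gamma^sd\mu$, $\int|\nabla A|^2|A|^4\gamma^sd\mu$ carries a factor $c\varepsilon$ (plus $\delta\int|\nabla_{(3)}A|^2\gamma^sd\mu$ plus harmless $c\varepsilon$ error) so that for $\veOne$ small all are absorbed and the stated differential inequality results.

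Two small clarifications. First, where you invoke Proposition \ref{PRconcentestvanilla} for the $|A|^6$ term and the residuals, what you actually need is the \emph{instantaneous} multiplicative Sobolev estimate Lemma \ref{LMmultsob} (followed by \eqref{LMenergygrad1EQ3} to trade $|\nabla_{(2)}A|^2$ for $\delta|\nabla_{(3)}A|^2$), not the time-integrated output of Proposition \ref{PRconcentestvanilla}; the latter cannot be fed back into a pointwise-in-time differential inequality. Second, your phrase ``Lemma \ref{LMmultsob} applied to $|\nabla_{(2)}A|^2$ in place of $|A|^2$'' is the right heuristic but is not literally Lemma \ref{LMmultsob}. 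In the paper the critical term $\int|\nabla_{(2)}A|^2|A|^2\gamma^sd\mu$ is first integrated by parts to $\int\nabla_{(3)}A*\nabla A*A*A\,\gamma^sd\mu$ plus a cutoff error, then Young gives $\delta\int|\nabla_{(3)}A|^2\gamma^sd\mu + c\int|\nabla A|^2|A|^4\gamma^sd\mu$; the latter, together with $\int|\nabla A|^4\gamma^sd\mu$, is then handled by a direct application of Theorem \ref{TMhs} (Hoffman--Spruck) to $u=|\nabla A|\,|A|^2\gamma^{s/2}$ and to $u=|\nabla A|\,|A|\gamma^{s/2}$ respectively, producing the $\varepsilon$ factor. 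This is exactly your anticipated bookkeeping, and with it the argument goes through as you describe.
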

\begin{proof}
Throughout the proof we apply several of the previous results. This requires that we choose an $s$ sufficiently large to
allow application of each of them.  Of course, since there are only finitely many choices, $s$ is an absolute constant.
We begin by using Lemma \ref{LMmultsob}, the smallness assumption, and estimate \eqref{LMenergygrad1EQ3} to improve the statement of Lemma \ref{LMenergygrad1}
to
\begin{align}
\rD{}{t}\int_\Sigma|\nabla A|^2\gamma^sd\mu
    &+ \frac32\int_\Sigma |\nabla_{(3)}A|^2 \gamma^sd\mu
\notag\\*&\le
     \int_\Sigma \nabla A*\big(P_3^3 + P_5^1\big)(A)\,\gamma^sd\mu
  + c\varepsilon
    + c\int_{[\gamma>0]}|\overline{\nabla_{(4)}\Rc}|^2\,d\mu
\label{LMbootstrappingEQ0}
\,.
\end{align}
We now need to deal with the $P$-style terms above.
Let us begin by estimating
\begin{align}
\int_\Sigma \nabla A*&\big(P_3^3 + P_5^1\big)(A)\,\gamma^sd\mu
\notag\\
 &= \int_\Sigma \big(\nabla_{(3)}A*\nabla A*A*A
                  + \nabla_{(2)}A*\nabla A*\nabla A*A
\notag\\&\qquad
                  + \nabla A*\nabla A*\nabla A*\nabla A
                  + \nabla A*\nabla A*A*A*A*A\big)\,\gamma^sd\mu
\notag\\&\le
    \delta\int_\Sigma |\nabla_{(3)}A|^2\gamma^sd\mu
  + \int_\Sigma \big(|\nabla_{(2)}A|^2|A|^2 + |\nabla A|^4 + |\nabla A|^2|A|^4\big)\gamma^sd\mu
\label{LMbootstrappingEQ1}
\,.
\end{align}
To control the second term on the right hand side we use new multiplicative Sobolev inequalities 
Modifying the proof of \eqref{LMenergygrad2EQ4} we see that the $|\nabla A|^4$ component can be estimated with
\begin{align}
\int_\Sigma|\nabla A|^4\gamma^sd\mu
 \le c\int_\Sigma |\nabla_{(2)}A|^2|A|^2\gamma^sd\mu
  + c\int_\Sigma |A|^6\,\gamma^{s}d\mu
  + c\vn{A}^2_{2,[\gamma>0]}\,.
\label{LMbootstrappingEQ3}
\end{align}
Since we already have good control over $\vn{A}^6_{6,\gamma^s}$, our control over the right hand side is as good as our
control over $\vn{A\nabla_{(2)}A}^2_{2,\gamma^s}$.
The last component is also controlled by the first.
Recall that we assume \eqref{EQambientcurvass} and \eqref{EQambientinjass} are satisfied for $u=\gamma$, and clearly the
support of $\varphi\gamma$ is contained in the support of $\gamma$ for any function $\varphi:\Sigma\rightarrow\R$.
We may thus apply the Hoffman-Spruck inequality to find
\begin{align}
\int_\Sigma |\nabla A|^2|A|^4\gamma^sd\mu
&\le
  c\Big(
     \int_\Sigma \big(|\nabla_{(2)}A|\,|A|^2 + |\nabla A|^2|A| + |\nabla A|\,|A|^3\big)\,\gamma^{\frac{s}{2}}d\mu
\notag\\&\qquad
   + c\int_\Sigma |\nabla A|\,|A|^2\gamma^{\frac{s}{2}-1}d\mu
   \Big)^2
\notag\\&\le
  c\vn{A}^2_{2,[\gamma>0]}
     \int_\Sigma \big(|\nabla_{(2)}A|^2|A|^2 + |\nabla A|^4 + |\nabla A|^2\,|A|^4\big)\gamma^{s}d\mu
\notag\\&\qquad
   + c\vn{A}^2_{2,[\gamma>0]}
      \int_\Sigma |\nabla A|^2|A|^2\,\gamma^{s-2}d\mu
\notag\\&\le
  c\vn{A}^2_{2,[\gamma>0]}
     \int_\Sigma \big(|\nabla_{(2)}A|^2|A|^2 + |\nabla A|^4 + |\nabla A|^2\,|A|^4\big)\gamma^{s}d\mu
\notag\\&\qquad
   + c\vn{A}^2_{2,[\gamma>0]}\int_\Sigma |A|^4\gamma^{s-4}d\mu
\notag\\&\le
  c\vn{A}^2_{2,[\gamma>0]}
     \int_\Sigma \big(|\nabla_{(2)}A|^2|A|^2 + |\nabla A|^4 + |\nabla A|^2\,|A|^4 + |A|^6\big)\gamma^{s}d\mu
\notag\\&\qquad
   + c\vn{A}^4_{2,[\gamma>0]}
\,.
\label{LMbootstrappingEQ4}
\end{align}
The critical term to control is thus $\vn{A\nabla_{(2)}A}^2_{2,\gamma^s}$.
Using integration by parts we first compute that
\begin{align*}
\int_\Sigma |\nabla_{(2)}A|^2|A|^2\gamma^sd\mu
 &\le
    \int_\Sigma \nabla_{(3)}A*\nabla A*A*A\,\gamma^sd\mu
\\&\quad
  + \int_\Sigma \nabla_{(2)}A*\nabla A*A*A*\nabla\gamma\,\gamma^{s-1}d\mu
\\&\le
    \delta\int_\Sigma |\nabla_{(3)}A|^2\gamma^sd\mu
  + c\int_\Sigma |\nabla A|^2|A|^4\gamma^sd\mu
\\&\quad
  + \int_\Sigma \nabla_{(2)}A*\nabla A*A*A*\nabla\gamma\,\gamma^{s-1}d\mu
\\&\le
    \delta\int_\Sigma |\nabla_{(3)}A|^2\gamma^sd\mu
  + c\int_\Sigma |\nabla A|^2|A|^4\gamma^sd\mu
\\*&\quad
  + \frac12\int_\Sigma |\nabla_{(2)}A|^2|A|^2\gamma^{s}d\mu
  + c\int_\Sigma |\nabla A|^2|A|^2\gamma^{s-2}d\mu\,.
\end{align*}
We continue by absorbing, using \eqref{LMbootstrappingEQ3}, \eqref{LMbootstrappingEQ4} and estimating to obtain
\begin{align*}
\int_\Sigma &|\nabla_{(2)}A|^2|A|^2\gamma^sd\mu
\\
&\le
  c\vn{A}^2_{2,[\gamma>0]}
     \int_\Sigma \big(|\nabla_{(2)}A|^2|A|^2 + |\nabla A|^4 + |\nabla A|^2\,|A|^4 + |A|^6\big)\gamma^{s}d\mu
\\&\qquad
   + \delta\int_\Sigma |\nabla_{(3)}A|^2\gamma^sd\mu
   + c\vn{A}^4_{2,[\gamma>0]}
   + c\int_\Sigma |\nabla A|^2|A|^2\gamma^{s-2}d\mu
\\&\le
  c\vn{A}^2_{2,[\gamma>0]}
     \int_\Sigma \big(|\nabla_{(2)}A|^2|A|^2 + |\nabla A|^4 + |\nabla A|^2\,|A|^4 + |A|^6\big)\gamma^{s}d\mu
\\&\qquad
   + \delta\int_\Sigma |\nabla_{(3)}A|^2\gamma^sd\mu
   + c\vn{A}^4_{2,[\gamma>0]}
\\&\qquad
   + \delta\int_\Sigma |\nabla A|^4\gamma^{s}d\mu
   + c\int_\Sigma |A|^4\gamma^{s-4}d\mu
\\&\le
  c\vn{A}^2_{2,[\gamma>0]}
     \int_\Sigma \big(|\nabla_{(2)}A|^2|A|^2 + |\nabla A|^4 + |\nabla A|^2\,|A|^4 + |A|^6\big)\gamma^{s}d\mu
\\&\qquad
   + \delta\int_\Sigma |\nabla_{(3)}A|^2\gamma^sd\mu
   + c\vn{A}^2_{2,[\gamma>0]}
   + c\vn{A}^4_{2,[\gamma>0]}
\\&\qquad
   + c\tilde\delta\int_\Sigma |\nabla_{(2)}A|^2|A|^2\gamma^sd\mu
   + c\int_\Sigma |A|^6\gamma^{s}d\mu
\,.
\end{align*}
Absorbing once again by choosing $\tilde\delta$ sufficiently small and using Lemma \ref{LMmultsob} followed by \eqref{LMenergygrad1EQ3} we finally have
\begin{align}
\int_\Sigma |\nabla_{(2)}A|^2&|A|^2\gamma^sd\mu
\notag\\
&\le
  c\vn{A}^2_{2,[\gamma>0]}
     \int_\Sigma \big(|\nabla_{(2)}A|^2|A|^2 + |\nabla A|^4 + |\nabla A|^2\,|A|^4 + |A|^6\big)\gamma^{s}d\mu
\notag\\&\qquad
   + \delta\int_\Sigma |\nabla_{(3)}A|^2\gamma^sd\mu
   + c\vn{A}^2_{2,[\gamma>0]}
   + c\vn{A}^4_{2,[\gamma>0]}
\,.
\label{LMbootstrappingEQ5}
\end{align}
Now combining each of the estimates \eqref{LMbootstrappingEQ1}, \eqref{LMbootstrappingEQ3}, \eqref{LMbootstrappingEQ4},
\eqref{LMbootstrappingEQ5} with \eqref{LMbootstrappingEQ0} and choosing $\delta$ sufficiently small we have
\begin{align*}
\rD{}{t}\int_\Sigma|\nabla A|^2\gamma^sd\mu
    &+ \frac54\int_\Sigma \big(|\nabla_{(3)}A|^2 + |\nabla_{(2)}A|^2|A|^2 + |\nabla A|^4 + |\nabla A|^2|A|^4 + |A|^6\big)\gamma^sd\mu
\notag\\&\le
    c\varepsilon\int_\Sigma \big(|\nabla_{(2)}A|^2|A|^2 + |\nabla A|^4 + |\nabla A|^2|A|^4\big)\gamma^sd\mu
\notag\\&\qquad
  + c\varepsilon
  + c\int_{[\gamma>0]}|\overline{\nabla_{(4)}\Rc}|^2\,d\mu
\,.
\end{align*}
Absorbing again for $\veOne$ sufficiently small and integrating finishes the proof.
\end{proof}

\begin{prop}
\label{PRbootstrappingsecondstep}
Suppose $(N,\nIP{\cdot}{\cdot})$ is smooth, let $f:\Sigma\times[0,T)\rightarrow N$ be a solution to \eqref{EQwf} and let
$\gamma$ be as in \eqref{EQgamma} such that the assumptions \eqref{EQambientcurvass} and \eqref{EQambientinjass} of
Theorem \ref{TMhs} with $u=\gamma$ are satisfied.
There is an $\veTwo>0$ and $s_2$ depending only on $\cg$ and $\SK_i$ for $i\in\{0,1,2,3,4\}$ such that if
\begin{equation}
\varepsilon = \sup_{[0,T)}\int_{[\gamma>0]}|A|^2d\mu
            \le \veTwo \le \veOne \le \veZero
\label{EQeps2small}
\end{equation}
then for any $t\in[0,T)$ we have
\begin{align*}
\int_\Sigma|\nabla_{(2)} A|^2\gamma^{s_2}d\mu\bigg|_{\tau=t}
    &+ \frac54\int_0^t
       \int_\Sigma \big(
                     |\nabla_{(2)}A|^2|A|^2
                   + |\nabla A|^4
                   + |\nabla A|^2\,|A|^4
\notag\\&\qquad\qquad
                   + |A|^6
                   + |\nabla_{(2)}A|^2|A|^4 + |\nabla_{(3)}A|^2
                   + |\nabla_{(3)}A|^2|A|^2
\notag\\&\qquad\qquad
                   + |\nabla_{(2)} A|^2|\nabla A|^2
                   + |\nabla A|^2|A|^6 \big)\gamma^{s_2}d\mu\,d\tau
\notag\\&\le
    c_3 + c_3t\bigg(
              \sup_{[0,T)}\int_{[\gamma>0]}\big(|A|^2 + |\overline{\nabla_{(5)}\Rc}|^2\big)\,d\mu
        \bigg)
\,.
\end{align*}
where $c_3$ is a constant depending only on $\cg$, $\SK_i$ for $i\in\{0,1,2,3,4\}$, $T$, and $\vn{\nabla_{(i)}
A}^2_{2,[\gamma>0]}\big|_{t=0}$ for $i\in\{0, 1, 2\}$.
\end{prop}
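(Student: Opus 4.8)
The plan is to run the argument of Proposition \ref{PRbootstrappingfirststep} one derivative higher, with Lemma \ref{LMenergygrad2} as the starting point. That lemma has already quarantined the only dangerous quantity, $c\int_\Sigma \nabla_{(2)}A*(P_3^4+P_5^2)(A)\,\gamma^sd\mu$. Of the other three terms on its right-hand side, $\int_\Sigma|A|^2\gamma^{s-8}d\mu\le\varepsilon$ once $s=s_2$ is large (recall $0\le\gamma\le1$); $\int_\Sigma|\overline{\nabla_{(5)}\Rc}|^2\gamma^sd\mu$ is kept unchanged and, after integration in time, furnishes the $\sup_{[0,T)}\int_{[\gamma>0]}|\overline{\nabla_{(5)}\Rc}|^2d\mu$ contribution of the conclusion; and $\int_\Sigma|A|^6\gamma^sd\mu$ is controlled, upon requiring $s_2\ge s_1$ so that $\gamma^{s_2}\le\gamma^{s_1}$, by the time-integral estimate in the conclusion of Proposition \ref{PRbootstrappingfirststep}. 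So the work is entirely in the $P$-term.

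First I would expand $\nabla_{(2)}A*(P_3^4+P_5^2)(A)$ into its finitely many $*$-monomials and apply Young's inequality. The genuinely top-order piece $\nabla_{(2)}A*\nabla_{(4)}A*A*A$ is peeled off as $\delta\int_\Sigma|\nabla_{(4)}A|^2\gamma^sd\mu + c\int_\Sigma|\nabla_{(2)}A|^2|A|^4\gamma^sd\mu$, exactly as $\nabla_{(3)}A*A*A$ was handled in \eqref{LMenergygrad2EQ1}, and the cubic-in-derivative monomials such as $|\nabla_{(2)}A|^3|A|$ are treated by one further integration by parts, as $|\nabla A|^4$ was in \eqref{LMenergygrad2EQ4}. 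What survives, after these reductions, is a finite collection of integrals: on one side, unweighted derivative terms $\int_\Sigma|\nabla_{(j)}A|^2\gamma^sd\mu$ with $j\le 3$, which the interpolation inequalities \eqref{LMevolutionequationhigherEQint1}, \eqref{LMevolutionequationhigherEQint2}, \eqref{LMenergygrad1EQ2}, \eqref{LMenergygrad1EQ3} and \eqref{LMenergygrad2EQ2} convert into $\delta\int_\Sigma|\nabla_{(4)}A|^2\gamma^sd\mu$ plus multiples of $\int_\Sigma|A|^2\gamma^{s-2k-4}d\mu\le\varepsilon$ (enlarging $s_2$ as needed for the finitely many applications, so it stays absolute); and on the other side the ``critical'' weighted terms $\int_\Sigma|\nabla_{(3)}A|^2|A|^2\gamma^sd\mu$, $\int_\Sigma|\nabla_{(2)}A|^2|A|^4\gamma^sd\mu$, $\int_\Sigma|\nabla_{(2)}A|^2|\nabla A|^2\gamma^sd\mu$, $\int_\Sigma|\nabla_{(2)}A|^2|A|^2\gamma^sd\mu$, $\int_\Sigma|\nabla A|^4\gamma^sd\mu$, $\int_\Sigma|\nabla A|^2|A|^4\gamma^sd\mu$ and $\int_\Sigma|\nabla A|^2|A|^6\gamma^sd\mu$ which, together with $\int_\Sigma|\nabla_{(3)}A|^2\gamma^sd\mu$ (manufactured on the left of Lemma \ref{LMenergygrad2} by reverse interpolation) and $\int_\Sigma|A|^6\gamma^sd\mu$, are exactly the nine integrands displayed in the conclusion.

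The critical terms are where the smallness hypothesis is spent. Mirroring the chain \eqref{LMbootstrappingEQ3}, \eqref{LMbootstrappingEQ4}, \eqref{LMbootstrappingEQ5} one order higher, I would derive multiplicative Sobolev estimates by applying Theorem \ref{TMhs} to test functions $\varphi\gamma^{s/2}$, with $\varphi$ a suitable monomial in $A,\nabla A,\nabla_{(2)}A$ --- admissible because the support of $\varphi\gamma^{s/2}$ lies in that of $\gamma$, where \eqref{EQambientcurvass} and \eqref{EQambientinjass} hold --- followed by Young's inequality. Each such estimate bounds one critical term by $c\varepsilon$ times a sum of the nine good terms (now also allowing $\int_\Sigma|\nabla_{(4)}A|^2\gamma^sd\mu$ with the correct weight, reabsorbed on the left of Lemma \ref{LMenergygrad2}) plus $c\varepsilon^2$ plus quantities already controlled through Proposition \ref{PRbootstrappingfirststep} and the interpolation inequalities. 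Since these estimates couple the critical terms to one another, I would close them as a finite linear system by repeated absorption for $\varepsilon\le\veTwo$ sufficiently small, precisely as the first-step proof passes from \eqref{LMbootstrappingEQ3}, \eqref{LMbootstrappingEQ4} to \eqref{LMbootstrappingEQ5}. Substituting everything back into Lemma \ref{LMenergygrad2} gives a differential inequality of the shape
\[
\rD{}{t}\int_\Sigma|\nabla_{(2)}A|^2\gamma^{s_2}d\mu
 + \frac54\int_\Sigma(\text{the nine good terms})\,\gamma^{s_2}d\mu
 \le c\varepsilon + c\int_{[\gamma>0]}|\overline{\nabla_{(5)}\Rc}|^2d\mu + (\text{terms controlled by Proposition~\ref{PRbootstrappingfirststep}}),
\]
and integrating it over $[0,t]$ yields the claimed estimate. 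The constant $c_3$ then depends on $\cg$, on $\SK_i$ for $i\in\{0,1,2,3,4\}$, on $T$ (which enters through a double time-integration when the conclusion of Proposition \ref{PRbootstrappingfirststep} is substituted), and on $\vn{\nabla_{(i)}A}^2_{2,[\gamma>0]}\big|_{t=0}$ for $i\in\{0,1,2\}$ --- via the initial datum $\int_\Sigma|\nabla_{(2)}A|^2\gamma^{s_2}d\mu\big|_{t=0}$ and via the constant $c_2$ of Proposition \ref{PRbootstrappingfirststep}.

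I expect the main obstacle to be the combinatorial bookkeeping: correctly enumerating the monomials of $\nabla_{(2)}A*(P_3^4+P_5^2)(A)$, tracking the powers of $\gamma$ shed at each integration by parts so that a single large $s_2$ suffices for all of the auxiliary estimates, and --- most delicately --- arranging the Hoffman--Spruck applications so that the mutually coupled top-weight terms $\int_\Sigma|\nabla_{(3)}A|^2|A|^2\gamma^{s_2}d\mu$, $\int_\Sigma|\nabla_{(2)}A|^2|A|^4\gamma^{s_2}d\mu$, $\int_\Sigma|\nabla_{(2)}A|^2|\nabla A|^2\gamma^{s_2}d\mu$ and $\int_\Sigma|\nabla A|^2|A|^6\gamma^{s_2}d\mu$ can all be absorbed simultaneously once $\varepsilon$ is small enough.
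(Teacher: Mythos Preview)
Your plan is correct and matches the paper's proof closely: start from Lemma~\ref{LMenergygrad2}, expand the $P$-term into monomials, peel off $\nabla_{(4)}A$ with a $\delta$, and control each of the remaining critical weighted integrals by a Hoffman--Spruck application to a monomial test function, closing the resulting coupled system by absorption for small $\varepsilon$. The paper carries this out term by term, treating $|\nabla A|^2|A|^6$, then $|\nabla_{(2)}A|^2|A|^4$, then $|\nabla_{(2)}A|^2|\nabla A|^2$, then $|\nabla_{(3)}A|^2|A|^2$, in that order, and arrives at exactly the differential inequality you describe.

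One technical point worth making explicit: in the paper the $|A|^6$ term is not only handled via the time-integral of Proposition~\ref{PRbootstrappingfirststep}, but is upgraded to a \emph{pointwise-in-time} bound $\int_\Sigma|A|^6\gamma^{s-p}d\mu\le c\varepsilon$ (with $c=c(T)$), obtained by applying Theorem~\ref{TMhs} to $u=|A|^3\gamma^{s/2}$ and feeding in the pointwise bound on $\int_\Sigma|\nabla A|^2\gamma^{s_1}d\mu$ from Proposition~\ref{PRbootstrappingfirststep}. This pointwise $|A|^6$ estimate is what produces the clean $c\varepsilon$ prefactors in the intermediate Sobolev steps (for $|\nabla A|^2|A|^6$ and $|\nabla A|^2|A|^2$) that make the absorption go through uniformly in $t$; it is also the true source of the $T$-dependence in $c_3$. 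Your outline already contains this step implicitly (it is one of the ``multiplicative Sobolev estimates'' with $\varphi=|A|^3$), but you should be aware that it plays a structural role rather than being just one more term.
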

\begin{proof}
We begin by using Lemma \ref{LMmultsob}, the smallness assumption, and estimates \eqref{LMenergygrad1EQ11}, \eqref{LMenergygrad2EQ2} to improve the statement of
Lemma \ref{LMenergygrad2} to
\begin{align}
\rD{}{t}\int_\Sigma|\nabla_{(2)} A|^2\gamma^sd\mu
    &+ \frac32\int_\Sigma |\nabla_{(4)}A|^2 \gamma^sd\mu
\notag\\&\le
     \int_\Sigma \nabla_{(2)} A*\big(P_3^4 + P_5^2\big)(A)\,\gamma^sd\mu
  + c\int_{[\gamma>0]} \big(|A|^2 + |\overline{\nabla_{(5)}\Rc}|^2\big)d\mu
\label{LMbootstrapping2EQ0}
\,.
\end{align}
We need to deal with the $P$-style terms above.
Let us use integration by parts to estimate
\begin{align}
\int_\Sigma& \nabla_{(2)} A*\big(P_3^4 + P_5^2\big)(A)\,\gamma^sd\mu
\notag\\
 &= \int_\Sigma \big(\nabla_{(2)}A*\nabla_{(4)} A*A*A
                  + \nabla_{(2)}A*\nabla_{(3)}A*\nabla A*A
\notag\\&\quad
                  + \nabla_{(2)}A*\nabla_{(2)}A*\nabla_{(2)} A*A
                  + \nabla_{(2)}A*\nabla_{(2)}A*\nabla A*\nabla A
\notag\\&\quad
                  + \nabla_{(2)}A*\nabla_{(2)} A*A*A*A*A
                  + \nabla_{(2)}A*\nabla A*\nabla A*A*A*A\big)\,\gamma^sd\mu
\notag\\&\le
    \delta\int_\Sigma |\nabla_{(4)}A|^2\gamma^sd\mu
  + c\int_\Sigma \big(|\nabla_{(2)}A|^2|A|^4 + |\nabla_{(3)}A|^2|A|^2
\notag\\&\qquad\qquad\qquad\qquad\qquad
                   + |\nabla_{(2)} A|^2|\nabla A|^2 + |\nabla_{(2)}A|^2 + |\nabla A|^2|A|^6
                \big)\gamma^sd\mu
\label{LMbootstrapping2EQ1}
\,.
\end{align}
In order to deal with these terms we use the following interpolation inequality. The proof is by induction on an
easy modification of the argument used to
prove \eqref{LMenergygrad1EQ3}.
For each $\delta>0$ there exist constants $c\in(0,\infty)$ and $s\in(4,\infty)$ depending only on $\delta$, $n$, $p$,
$r$, $\cg$ and $k$ such that 
\begin{equation}
\int_\Sigma |\nabla_{(k)}A|^2\gamma^{s-p}d\mu
 \le \delta\int_\Sigma |\nabla_{(k+q)}A|^2\gamma^{s+r}d\mu + c\int_{[\gamma>0]} |A|^2d\mu\,,
\label{LMbootstrapping2EQ2}
\end{equation}
We will also need the estimate
\begin{equation}
\int_\Sigma |A|^6 \gamma^{s-p}d\mu \le c\varepsilon
\label{LMbootstrapping2EQ3}
\end{equation}
which holds for $s \ge \max\{s_1+p,\, 2p+4\}$, and for $c$ depending additionally on $T$. This dependence is affine, so that $T<\infty$ gives $c<\infty$.
The proof of this is similar to that of Lemma \ref{LMmultsob}, except we use the additional information given to us by Proposition
\ref{PRbootstrappingfirststep}. For the reader's convenience we reproduce the details here. Applying Theorem \ref{TMhs} we have
\begin{align}
\int_\Sigma |A|^6\gamma^sd\mu
 &\le c\bigg(
              \int_\Sigma |\nabla A|\,|A|^2\gamma^{\frac{s}{2}}d\mu
            + \int_\Sigma |A|^3\gamma^{\frac{s}{2}}d\mu
            + \int_\Sigma |A|^4\gamma^{\frac{s}{2}}d\mu
       \bigg)^2
\notag\\
 &\le c\bigg(
              \int_\Sigma |\nabla A|\,|A|^2\gamma^{\frac{s}{2}}d\mu
            + \int_\Sigma |A|^2\gamma^{\frac{s}{2}}d\mu
            + c\int_\Sigma |A|^4\gamma^{\frac{s}{2}}d\mu
       \bigg)^2
\notag\\
 &\le c\int_\Sigma |\nabla A|^2\gamma^{\frac{s}{2}}d\mu
       \int_\Sigma |A|^4\gamma^{\frac{s}{2}}d\mu
    + c\varepsilon\int_\Sigma |A|^6\gamma^{s}d\mu
    + c\varepsilon
\notag\\
 &\le
      c\Big(\int_\Sigma |\nabla A|^2\gamma^{\frac{s}{2}}d\mu\Big)^2
       \int_{[\gamma>0]} |A|^2d\mu
    + \frac12\int_\Sigma |A|^6\gamma^{s}d\mu
\notag\\&\qquad
    + c\varepsilon\int_\Sigma |A|^6\gamma^{s}d\mu
    + c\varepsilon
\notag\\
 &\le
      \frac12\int_\Sigma |A|^6\gamma^{s}d\mu
    + c\varepsilon\int_\Sigma |A|^6\gamma^{s}d\mu
    + c\varepsilon\,,
\label{LMbootstrapping2EQreadconvenience}
\end{align}
whereupon absorbing finishes the proof of \eqref{LMbootstrapping2EQ3}.
We applied Proposition \ref{PRbootstrappingfirststep} in the last estimate, which requires $\frac{s}{2}\ge s_1$.
Note that the constant depends on $T$ and the initial data, but is finite so long as $T<\infty$ and $f_0 \in W^{3,2}$.

We now continue with the main proof.
Applying Theorem \ref{TMhs} we estimate the last term on the right hand side of \eqref{LMbootstrapping2EQ1} by
\begin{align}
\int_\Sigma |\nabla A|^2|A|^6\gamma^sd\mu
 &\le c\bigg(
              \int_\Sigma |\nabla_{(2)} A|\,|A|^3\gamma^{\frac{s}{2}}d\mu
            + \int_\Sigma |\nabla A|^2\,|A|^2\gamma^{\frac{s}{2}}d\mu
\notag\\&\quad
            + \int_\Sigma |\nabla A|\,|A|^3\gamma^{\frac{s}{2}-1}d\mu
            + \int_\Sigma |\nabla A|\,|A|^4\gamma^{\frac{s}{2}}d\mu
       \bigg)^2
\notag\\
 &\le c\varepsilon\int_\Sigma |\nabla_{(2)} A|^2|A|^2\gamma^{s}d\mu
    + c\varepsilon\int_\Sigma |\nabla A|^2\,|A|^6\gamma^{s}d\mu
\notag\\&\quad
            + c\bigg(\int_\Sigma |\nabla A|^2\,|A|^2\gamma^{\frac{s}{2}}d\mu\bigg)^2
            + c\int_\Sigma |A|^6\gamma^{\frac{s}{2}-2}d\mu
              \int_\Sigma |\nabla A|^2\gamma^{\frac{s}{2}}d\mu
\notag\\
 &\le c\varepsilon\int_\Sigma |\nabla_{(2)} A|^2|A|^2\gamma^{s}d\mu
    + c\varepsilon\int_\Sigma |\nabla A|^2\,|A|^6\gamma^{s}d\mu
\notag\\&\quad
            + c\bigg(\int_\Sigma |\nabla A|^2\,|A|^2\gamma^{\frac{s}{2}}d\mu\bigg)^2
            + c\varepsilon
              \int_\Sigma |\nabla A|^2\gamma^{\frac{s}{2}}d\mu\,,
\notag\\
 &\le c\varepsilon\int_\Sigma |\nabla_{(2)} A|^2|A|^2\gamma^{s}d\mu
    + c\varepsilon\int_\Sigma |\nabla A|^2\,|A|^6\gamma^{s}d\mu
\notag\\&\quad
            + c\bigg(\int_\Sigma |\nabla A|^2\,|A|^2\gamma^{\frac{s}{2}}d\mu\bigg)^2
            + c\varepsilon
\label{LMbootstrapping2EQ4}
\end{align}
using \eqref{LMbootstrapping2EQ3} in the last two steps and requiring $s \ge \max\{2s_1+2p,\, 4p+8\}$.
We continue by using Theorem \ref{TMhs}, integration by parts, and standard estimates to control the second last term:
\begin{align*}
\int_\Sigma &|\nabla A|^2|A|^2\gamma^{\frac{s}{2}}d\mu
\\*&\le c\bigg(
           \int_\Sigma \big(|\nabla_{(2)}A|\cdot|A| + |\nabla A|^2 + |\nabla A|\cdot|A|^2\big)\gamma^{\frac{s}{4}} + |\nabla
A|\cdot|A|\gamma^{\frac{s}{4}-1}\,d\mu
        \bigg)^2
\\
  &\le
       c\varepsilon
        \int_\Sigma |\nabla A|^2|A|^2\gamma^{\frac{s}{2}}d\mu
     + c\varepsilon
        \int_\Sigma |\nabla_{(2)}A|\gamma^{\frac{s}{2}}d\mu
     + c\varepsilon
        \int_\Sigma |\nabla A|^2\gamma^{\frac{s}{2}-2}d\mu\,,
\end{align*}
so that \eqref{LMbootstrapping2EQ3} (with $s \ge 2s_1+4$), Proposition \ref{PRbootstrappingfirststep}, and absorbing give
\begin{align}
\int_\Sigma |\nabla A|^2|A|^2\gamma^{\frac{s}{2}}d\mu
  &\le
       c\varepsilon
        \int_\Sigma |\nabla_{(2)}A|^2\gamma^{\frac{s}{2}}d\mu
     + c\varepsilon\,.
\label{LMbootstrapping2EQ5}
\end{align}
Squaring \eqref{LMbootstrapping2EQ5} and estimating with \eqref{LMbootstrapping2EQ2} we arrive at
\begin{align}
\bigg(\int_\Sigma |\nabla A|^2|A|^2\gamma^{\frac{s}{2}}d\mu\bigg)^2
  &\le
       c\varepsilon
        \int_\Sigma |\nabla_{(3)}A|^2\gamma^{{s}}d\mu
     + c\varepsilon\,.
\label{LMbootstrapping2EQ6}
\end{align}
Combining \eqref{LMbootstrapping2EQ6} with \eqref{LMbootstrapping2EQ4} and absorbing we have
\begin{align}
\int_\Sigma |\nabla A|^2|A|^6\gamma^sd\mu
 &\le c\varepsilon\int_\Sigma |\nabla_{(2)} A|^2|A|^2\gamma^{s}d\mu
     + c\varepsilon
        \int_\Sigma |\nabla_{(3)}A|^2\gamma^{{s}}d\mu
     + c\varepsilon
\,.
\label{LMbootstrapping2EQ7}
\end{align}
The first term on the right hand side of \eqref{LMbootstrapping2EQ7} can be estimated exactly as in the proof of
Proposition \ref{PRbootstrappingfirststep} and the second term can be estimated via \eqref{LMbootstrapping2EQ2}.
In light of the work completed in the proof of Proposition \ref{PRbootstrappingfirststep} we have thus improved
\eqref{LMbootstrapping2EQ1} to
\begin{align}
\int_\Sigma \nabla_{(2)} A*&\big(P_3^4 + P_5^2\big)(A)\,\gamma^sd\mu
   + \int_\Sigma \big(|\nabla_{(3)}A|^2 + |\nabla A|^2|A|^6 \big)\gamma^{s}d\mu
\notag\\&\quad
   + \int_\Sigma \big(|\nabla_{(2)}A|^2|A|^2 + |\nabla A|^4 + |\nabla A|^2\,|A|^4 + |A|^6\big)\gamma^{s}d\mu
\notag\\&\le
    \delta\int_\Sigma |\nabla_{(4)}A|^2\gamma^sd\mu
\notag\\&\qquad
  + c\varepsilon
     \int_\Sigma \big(|\nabla_{(2)}A|^2|A|^2 + |\nabla A|^4 + |\nabla A|^2\,|A|^4 + |A|^6\big)\gamma^{s}d\mu
\notag\\&\qquad
  + c\int_\Sigma \big(|\nabla_{(2)}A|^2|A|^4 + |\nabla_{(3)}A|^2|A|^2
                   + |\nabla_{(2)} A|^2|\nabla A|^2
                \big)\gamma^sd\mu
   + c\varepsilon
\label{LMbootstrapping2EQ8}
\,.
\end{align}
The second integral on the right hand side of \eqref{LMbootstrapping2EQ8} can be absorbed for $\veZero$ sufficiently small. The first integrand of the
third integral is controlled by the second two, thanks to the following estimate.
Using the Hoffman-Spruck Sobolev inequality and \eqref{LMbootstrappingEQ5}, we have
\begin{align*}
\int_\Sigma &|\nabla_{(2)}A|^2|A|^4\gamma^sd\mu
\\
 &\le c\bigg(
         \int_\Sigma \big(
                       |\nabla_{(3)}A|\cdot|A|^2 + |\nabla_{(2)}A|\cdot|\nabla A|\cdot|A| + |\nabla_{(2)}A|\cdot|A|^3
                     \big)\gamma^{\frac{s}{2}}
\\&\qquad
                   + |\nabla_{(2)}A|\cdot|A|^2\gamma^{\frac{s}{2}-1}d\mu
       \bigg)^2
\\&\le
     c\varepsilon\int_\Sigma |\nabla_{(3)}A|^2|A|^2\gamma^s d\mu
   + c\varepsilon\int_\Sigma |\nabla_{(2)}A|^2|\nabla A|^2\gamma^s d\mu
\\*&\qquad
   + c\varepsilon\int_\Sigma |\nabla_{(3)}A|^2\gamma^s d\mu
   + c\varepsilon\int_\Sigma |\nabla_{(2)}A|^2|A|^4\gamma^sd\mu
\\*&\qquad
   + c\varepsilon\,,
\end{align*}
which upon absorption yields
\begin{align}
\int_\Sigma |\nabla_{(2)}A|^2|A|^4\gamma^sd\mu
 &\le 
     c\varepsilon\int_\Sigma |\nabla_{(3)}A|^2|A|^2\gamma^s d\mu
   + c\varepsilon\int_\Sigma |\nabla_{(2)}A|^2|\nabla A|^2\gamma^s d\mu
\notag\\&\qquad
   + c\varepsilon\int_\Sigma |\nabla_{(3)}A|^2\gamma^s d\mu
   + c\varepsilon\,.
\label{LMbootstrapping2EQ9}
\end{align}
Combining \eqref{LMbootstrapping2EQ9} with \eqref{LMbootstrapping2EQ8} and absorbing (for $\veZero$ sufficiently small) we find
\begin{align}
\int_\Sigma \nabla_{(2)} A*&\big(P_3^4 + P_5^2\big)(A)\,\gamma^sd\mu
   + \int_\Sigma \big(|\nabla_{(2)}A|^2|A|^4 + |\nabla_{(3)}A|^2 + |\nabla A|^2|A|^6 \big)\gamma^{s}d\mu
\notag\\&\quad
   + \int_\Sigma \big(|\nabla_{(2)}A|^2|A|^2 + |\nabla A|^4 + |\nabla A|^2\,|A|^4 + |A|^6\big)\gamma^{s}d\mu
\notag\\&\le
    \delta\int_\Sigma |\nabla_{(4)}A|^2\gamma^sd\mu
  + c\int_\Sigma \big(
                     |\nabla_{(3)}A|^2|A|^2
                   + |\nabla_{(2)} A|^2|\nabla A|^2
                \big)\gamma^sd\mu
   + c\varepsilon
\label{LMbootstrapping2EQ10}
\,.
\end{align}
The second integrand in the second integral on the right hand side of \eqref{LMbootstrapping2EQ10} can be estimated by
\begin{align*}
\int_\Sigma |\nabla_{(2)}A|^2&|\nabla A|^2\gamma^sd\mu
\\
 &\le c\bigg(
         \int_\Sigma \big(
                       |\nabla_{(3)}A|\cdot|\nabla A| + |\nabla_{(2)}A|^2 + |\nabla_{(2)}A|\cdot|\nabla A|\cdot|A|
                     \big)\gamma^{\frac{s}{2}}
\\&\qquad
                     + |\nabla_{(2)}A|\cdot|\nabla A|\gamma^{\frac{s}{2}-1}d\mu
       \bigg)^2
\\&\le
     c\bigg(\int_\Sigma |\nabla_{(3)}A|\cdot|\nabla A|\gamma^{\frac{s}{2}} d\mu\bigg)^2
   + c\bigg(\int_\Sigma |\nabla_{(2)}A|\cdot|\nabla A|\gamma^{\frac{s}{2}-1} d\mu\bigg)^2
\\&\qquad
   + c\bigg(\int_\Sigma |\nabla_{(2)}A|^2\gamma^{\frac{s}{2}} d\mu\bigg)^2
   + c\varepsilon\int_\Sigma |\nabla_{(2)}A|^2|\nabla A|^2\gamma^sd\mu
\\&\le
     c\bigg(\int_\Sigma |\nabla_{(3)}A|\cdot|\nabla A|\gamma^{\frac{s}{2}} d\mu\bigg)^2
   + c\bigg(\int_\Sigma |\nabla_{(2)}A|\cdot|\nabla A|\gamma^{\frac{s}{2}-1} d\mu\bigg)^2
\\&\qquad
   + c\varepsilon\int_\Sigma |\nabla_{(2)}A|^2|\nabla A|^2\gamma^sd\mu\,,
\end{align*}
which upon absorption and using Proposition \ref{PRbootstrappingfirststep} yields
\begin{align}
\int_\Sigma |\nabla_{(2)}A|^2&|\nabla A|^2\gamma^sd\mu
\notag\\
&\le
     c\bigg(\int_\Sigma |\nabla_{(3)}A|\cdot|\nabla A|\gamma^{\frac{s}{2}} d\mu\bigg)^2
   + c\bigg(\int_\Sigma |\nabla_{(2)}A|\cdot|\nabla A|\gamma^{\frac{s}{2}-1} d\mu\bigg)^2
\notag\\
&\le
     c\int_\Sigma |\nabla_{(3)}A|^2\gamma^{s-s_1}d\mu
   + c\int_\Sigma |\nabla_{(2)}A|^2\gamma^{s-s_1-2}d\mu\,.
\label{LMbootstrapping2EQ11}
\end{align}
Note that $s$ is already assumed larger than required for the last step. To estimate this pair of integrals we apply \eqref{LMbootstrapping2EQ2} twice to obtain
\begin{align}
\int_\Sigma |\nabla_{(2)}A|^2|\nabla A|^2\gamma^sd\mu
&\le
     \delta\int_\Sigma |\nabla_{(4)}A|^2\gamma^{s}d\mu
   + c\varepsilon\,.
\label{LMbootstrapping2EQ12}
\end{align}
Plugging the above estimate \eqref{LMbootstrapping2EQ12} into \eqref{LMbootstrapping2EQ10} we find
\begin{align}
\int_\Sigma \nabla_{(2)} A*&\big(P_3^4 + P_5^2\big)(A)\,\gamma^sd\mu
   + \int_\Sigma \big(|\nabla_{(2)}A|^2|A|^2 + |\nabla A|^4 + |\nabla A|^2\,|A|^4 + |A|^6\big)\gamma^{s}d\mu
\notag\\&\quad
   + \int_\Sigma \big(|\nabla_{(2)}A|^2|A|^4 + |\nabla_{(3)}A|^2
                   + |\nabla_{(2)} A|^2|\nabla A|^2
                   + |\nabla A|^2|A|^6 \big)\gamma^{s}d\mu
\notag\\&\le
    \delta\int_\Sigma |\nabla_{(4)}A|^2\gamma^sd\mu
  + c\int_\Sigma \big(
                     |\nabla_{(3)}A|^2|A|^2
                \big)\gamma^sd\mu
   + c\varepsilon
\label{LMbootstrapping2EQ13}
\,.
\end{align}
Estimating the remaining term on the right hand side of \eqref{LMbootstrapping2EQ13} is similar to the estimates we have
already obtained:
\begin{align}
\int_\Sigma |\nabla_{(3)}A|^2|A|^2\gamma^sd\mu
 &\le c\bigg(
         \int_\Sigma \big(
                       |\nabla_{(4)}A|\cdot|A| + |\nabla_{(3)}A|\cdot|\nabla A| + |\nabla_{(3)}A|\cdot|A|^2
                     \big)\gamma^{\frac{s}{2}}
\notag\\&\qquad
                     + |\nabla_{(3)}A|\cdot|A|\gamma^{\frac{s}{2}-1}d\mu
       \bigg)^2
\notag\\
&\le
     c\varepsilon\int_\Sigma |\nabla_{(4)}A|^2\gamma^sd\mu
   + c\varepsilon\int_\Sigma |\nabla_{(3)}A|^2|A|^2\gamma^sd\mu
\notag\\
&\qquad
   + c\varepsilon\int_\Sigma |\nabla_{(3)}A|^2\gamma^{s-s_1-2}d\mu\,.
\notag
\end{align}
Using again the estimate \eqref{LMbootstrapping2EQ2} and absorbing we finally have
\begin{align}
\int_\Sigma |\nabla_{(3)}A|^2|A|^2\gamma^sd\mu
&\le
     c\varepsilon\int_\Sigma |\nabla_{(4)}A|^2\gamma^sd\mu
   + c\varepsilon\,,
\notag
\end{align}
which we combine with \eqref{LMbootstrapping2EQ13} to obtain
\begin{align}
\int_\Sigma \nabla_{(2)} A*&\big(P_3^4 + P_5^2\big)(A)\,\gamma^sd\mu
\notag\\&
   + \int_\Sigma \big(|\nabla_{(2)}A|^2|A|^2 + |\nabla A|^4 + |\nabla A|^2\,|A|^4 + |A|^6\big)\gamma^{s}d\mu
\notag\\&
   + \int_\Sigma \big(|\nabla_{(2)}A|^2|A|^4 + |\nabla_{(3)}A|^2
                   + |\nabla_{(3)}A|^2|A|^2
\notag\\&\qquad\qquad\qquad\qquad
                   + |\nabla_{(2)} A|^2|\nabla A|^2
                   + |\nabla A|^2|A|^6 \big)\gamma^{s}d\mu
\notag\\&\le
    \delta\int_\Sigma |\nabla_{(4)}A|^2\gamma^sd\mu
   + c\varepsilon
\label{LMbootstrapping2EQ14}
\,.
\end{align}
Combining the estimate \eqref{LMbootstrapping2EQ14} with \eqref{LMbootstrapping2EQ0}, choosing $\delta$, $\veTwo$ sufficiently small and absorbing yields
the result.
\end{proof}

These last two estimates combine to give a pointwise curvature bound which is crucial.

\begin{cor}
\label{CYpointwisebound1}
Suppose $(N,\nIP{\cdot}{\cdot})$ is smooth, simply connected, complete with non-positive sectional curvature.
Suppose $f:\Sigma\times[0,T)\rightarrow N$ is a solution to \eqref{EQwf} satisfying for some $\rho>0$
\begin{equation*}
\sup_{[0,T)}\sup_{x\in N} \int_{f^{-1}(B_\rho(x))} |A|^2d\mu
            \le \veTwo\,
\end{equation*}
and
\[
\sup_{[0,T)}\sup_{x\in N}\int_{f^{-1}(B_\rho(x))} |\overline{\nabla_{(i)}\Rc}|^2\,d\mu \le \cK_{\rho,i} < \infty\,.
\]
Then for any $t\in[0,T)$ we have
\begin{equation*}
\vn{A}_{\infty} \le c\,\varepsilon\,,
\end{equation*}
where $c$ is a constant depending only on $\cg$, $\SK_i$ for $i\in\{0,1,2,3,4\}$, $\cK_{\rho,i}$ for $i\in\{3,4,5\}$,
and $\vn{\nabla_{(i)} A}^2_{2,[\gamma>0]}$ for $i\in\{1, 2\}$.
\end{cor}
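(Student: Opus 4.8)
The plan is to feed the local integral estimates of Propositions~\ref{PRconcentestvanilla}, \ref{PRbootstrappingfirststep} and \ref{PRbootstrappingsecondstep} into a Sobolev--iteration argument, and then to globalise by a covering of $N$. Fix $x\in N$ and let $\tilde\gamma$ be a smooth cutoff on $N$ with $\tilde\gamma\equiv1$ on $B_{\rho/2}(x)$ and $\tilde\gamma\equiv0$ outside $B_\rho(x)$, so that $\gamma:=\tilde\gamma\circ f$ is admissible as in \eqref{EQgamma} with $\cg\le c/\rho$. Since $N$ is simply connected and complete with non-positive sectional curvature, the remark following Theorem~\ref{TMhs} guarantees that \eqref{EQambientcurvass} and \eqref{EQambientinjass} hold for $u=\gamma$, and indeed for $u=\varphi\gamma$ with $0\le\varphi\le1$, along the entire flow; thus Theorem~\ref{TMhs} and every result depending on it is available once $\veTwo$ is chosen small. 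The first hypothesis reads $\varepsilon=\sup_{[0,T)}\int_{[\gamma>0]}|A|^2\,d\mu\le\veTwo$, and the second bounds $\sup_{[0,T)}\int_{[\gamma>0]}|\overline{\nabla_{(i)}\Rc}|^2\,d\mu\le\cK_{\rho,i}$ for $i\in\{3,4,5\}$.

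First I would chain together Propositions~\ref{PRconcentestvanilla}, \ref{PRbootstrappingfirststep} and \ref{PRbootstrappingsecondstep}, taking $s\ge\max\{s_1,s_2\}$ large enough that each applies; since only finitely many lower bounds on $s$ occur, $s$ is a universal constant. This yields, uniformly in $t\in[0,T)$, control of the weighted quantity $\int_\Sigma(|A|^2+|\nabla A|^2+|\nabla_{(2)}A|^2)\gamma^{s}\,d\mu$ over $f^{-1}(B_{\rho/2}(x))\subseteq[\gamma=1]$, together with the accompanying space--time bounds on $\int_0^t\int_\Sigma(|\nabla_{(2)}A|^2+|\nabla_{(3)}A|^2+|A|^6+\cdots)\gamma^{s}\,d\mu\,d\tau$. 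The point is that the zeroth-order quantity $\int_{[\gamma>0]}|A|^2\le\varepsilon$ enters the right-hand sides only through terms proportional to it, while all remaining contributions are bounded by a constant depending only on $\cg$, $\SK_i$ for $i\in\{0,\ldots,4\}$, $\cK_{\rho,i}$ for $i\in\{3,4,5\}$, and $\vn{\nabla_{(i)}A}_{2,[\gamma>0]}^2$ at $t=0$ for $i\in\{1,2\}$.

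Next I would upgrade this to a pointwise bound. Because $\dim\Sigma=2$, the Hoffman--Spruck inequality of Theorem~\ref{TMhs} is the critical Michael--Simon inequality $\vn{u}_{2}\le c\,(\vn{\nabla u}_{1}+\vn{Hu}_{1})$, so $W^{2,2}$ does not embed directly into $C^0$ and a direct Sobolev embedding is unavailable. Instead I would run a truncation (Stampacchia-type) iteration on the super-level sets of $|A|^2$ --- equivalently, a Moser-type iteration applying Theorem~\ref{TMhs} to $u=|A|^{q}\gamma^{s}$ and iterating the exponent $q$, of the kind used in \cite{kuwert2002gfw}. The smallness of $\varepsilon$ is exactly what makes the iteration converge: each step generates a factor $c\,\big(\int_{[\gamma>0]}|A|^2\big)\le c\veTwo$ multiplying the next higher integral norm, which stays below the absorption threshold for $\veTwo$ small. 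Absorbing along the way the lower-order errors --- the $\nabla\gamma$-terms, carrying positive powers of $\cg$; the terms with $|H|\le c|A|$; and the ambient-curvature remainders, converted from $\nabla$ to $\nablan$ derivatives as in the proof of Lemma~\ref{LMenergybasic} --- and using the interpolation inequalities \eqref{LMbootstrapping2EQ2} and the refined bound \eqref{LMbootstrapping2EQ3}, I would arrive at $\sup_{[\gamma=1]}|A|^2\le c\,\varepsilon$ for every $t\in[0,T)$. Finally, since $[\gamma=1]\supseteq f^{-1}(B_{\rho/2}(x))$ and the hypotheses hold uniformly in $x$, covering $f(\Sigma)$ by balls $B_{\rho/2}(x_j)$ and taking the supremum over $j$ gives $\vn{A}_\infty^2\le c\,\varepsilon$ uniformly in $t$, which is the asserted pointwise curvature bound.

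The main obstacle I anticipate is the iteration step: one must fix the cutoff powers $s$ consistently across Propositions~\ref{PRconcentestvanilla}--\ref{PRbootstrappingsecondstep} and the Sobolev iteration, check that every term produced along the way (products of $|A|$, $|\nabla A|$, $|\nabla_{(2)}A|$ with the iteration weights, plus the ambient-curvature remainders) is genuinely absorbable using the smallness of $\varepsilon$ and the interpolation inequalities, and keep careful track of the fact that it is $\int_{[\gamma>0]}|A|^2$ itself, and not its square root, that plays the role of the small parameter, so that the resulting estimate is linear in $\varepsilon$.
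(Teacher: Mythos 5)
Your proposal takes essentially the same route as the paper: the $W^{2,2}$ control of $A$ from Proposition~\ref{PRbootstrappingsecondstep} (available along the entire flow since the Cartan--Hadamard hypotheses make Theorem~\ref{TMhs} unconditionally applicable) is upgraded to $L^\infty$ on $[\gamma=1]$ via the Hoffman--Spruck Sobolev inequality, then globalised by covering. The only real difference is economy: rather than re-running the Moser/Stampacchia iteration you sketch, the paper simply invokes Lemma~4.3 of \cite{kuwert2002gfw} as a black box, which is precisely that truncation argument.
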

\begin{proof}
Let $x\in N$ be arbitrary and set $\gamma$ to be as in \eqref{EQgamma}, satisfying in addition
\[
\chi_{B_{\frac\rho2}(x)} \le \tilde\gamma \le \chi_{B_{\rho}(x)}\,.
\]
Lemma 4.3 from \cite{kuwert2002gfw} holds whenever Theorem \ref{TMhs} holds, which is for any $u$, due to the hypotheses
on the ambient space.
Thus, applying Proposition \ref{PRbootstrappingsecondstep},
\[
\vn{A}_{\infty} =
\vn{A}_{\infty,f^{-1}(B_\rho(x))} =
\vn{A}_{\infty,[\gamma=1]}
 \le c\varepsilon\big(
                \vn{\nabla_{(2)}A}_{2,[\gamma>0]}^2 + \varepsilon
                 \big)
 \le c\varepsilon\,.
\]
\end{proof}

For solutions of \eqref{EQwf} in 3-manifolds which are not simply-connected or have somewhere positive Ricci curvature,
it is not clear that the Hoffman-Spruck Sobolev inequality is applicable along the flow.  The surface area may grow to
violate one or both of \eqref{EQambientcurvass}, \eqref{EQambientinjass}.

This is somewhat tricky: it is absolutely critical that the Sobolev constant in Theorem \ref{TMhs} does \emph{not}
depend on the geometry of $f(\Sigma)$.  Our strategy is to show that if the initial \emph{concentration of area} is
small enough, \emph{and} we have good control on certain curvature integrals in $L^1([0,T))$, then the concentration of
area for a distinct time interval remains small.

\begin{prop}[Control on the concentration of area]
\label{PRconcentarea}
Let $f:\Sigma\times[0,T)\rightarrow N$ be a solution to \eqref{EQwf}.
Set
\[
\csupp := 
  \min\Bigg\{\frac{8\pi}{9\SK}\,,\,
            \frac{4\pi\sin^2\big(2\rho_N\sqrt\SK\big)}{9\SK}
      \bigg\}\,.
\]
There is a tuple $(\hat\varepsilon_0, \hosigma)$ depending only on the metric of $N$ and $\cg$ such that the following
statement holds.
Assume $\rho>0$ is such that
\begin{equation}
\sup_{[0,T)}\sup_{x\in N} \int_{f^{-1}(B_\rho(x))} |A|^2d\mu
            \le \hat\varepsilon_0 \le \min\{\veZero,\veOne,\veTwo\}
\label{EQconcentsmallsupass}
\end{equation}
where $\veZero$, $\veOne$, and $\veTwo$ are from Propositions \ref{PRconcentestvanilla}, \ref{PRbootstrappingfirststep},
\ref{PRbootstrappingsecondstep}, and
\begin{equation}
\sigma := \sup_{x\in N}|\Sigma|_{f^{-1}(B_\rho(x))}\big|_{t=0}
\le \hsigma_0 
< \csupp\,.
\label{EQinitsmallconcareaass}
\end{equation}
Let $\gamma$ be as in \eqref{EQgamma} with $[\gamma>0]\subset f^{-1}(B_\rho(x))$ for some $x\in N$.
Then the estimate
\begin{equation}
\gconc < \csupp
\label{EQsupportestimate}
\end{equation}
is valid for all $t\le \min\{T,t_0\} =: T^*(\hsigma_0,\hvarepsilon,\SK_0)$, where
\[
t_0 \ge \sqrt[4]{\frac{\csupp}{c_4(\hsigma_0+\hvarepsilon)}}-1\,,
\]
and $c_4$ is a constant depending only on the metric of $N$, $\cg$, $\SK_i$ for $i\in\{0,1,2,3,4,5\}$, and
$\vn{\nabla_{(i)} A}^2_{2}\big|_{t=0}$ for $i\in\{0,1,2\}$.  In particular, the conditions \eqref{EQambientcurvass} and
\eqref{EQambientinjass} are satisfied for any $u$ with compact support $[u>0]\subset[\gamma=1]$ on the time interval
$[0,T^*)$.
\end{prop}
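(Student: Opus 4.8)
The plan is a continuity argument in time. Fix $x\in N$ and a cutoff $\gamma$ as in \eqref{EQgamma} with $[\gamma>0]\subset f^{-1}(B_\rho(x))$, and let $T^*$ be the supremum of all $\tau\le T$ such that \eqref{EQsupportestimate} holds, for every admissible $\gamma$, on $[0,\tau)$. Since the flow is smooth, $t\mapsto\gconc$ is continuous, and $\gconc\big|_{t=0}=\sigma(x)\le\hsigma_0<\csupp$ by \eqref{EQinitsmallconcareaass}; hence $T^*>0$, and it suffices to produce a $t_0$, obeying the lower bound asserted in the statement, with $T^*\ge\min\{T,t_0\}$. The definition of $\csupp$ is arranged so that on $[0,T^*)$ the hypotheses \eqref{EQambientcurvass} and \eqref{EQambientinjass} of Theorem \ref{TMhs} hold: for $u$ with $[u>0]\subset[\gamma=1]$ one has $|\Sigma|_{[u>0]}\le|\Sigma|_{[\gamma=1]}\le\gconc<\csupp$, and --- after a covering argument bounding $|\Sigma|_{f^{-1}(B_\rho(x))}$ by a metric-dependent multiple of finitely many quantities of the form $\gconc$, the multiple being absorbed into the admissible smallness of $\hsigma_0$ and $\hvarepsilon$ --- also for $u=\gamma$ itself. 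Consequently, by \eqref{EQconcentsmallsupass}, Theorem \ref{TMhs}, Lemma \ref{LMmultsob} and Propositions \ref{PRconcentestvanilla}, \ref{PRbootstrappingfirststep}, \ref{PRbootstrappingsecondstep} are all available on $[0,T^*)$.

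The core of the argument is a differential inequality for $\gconc$ on $[0,T^*)$. From $\partial_t\,d\mu=H\BWv\,d\mu$ and $\partial_t\gamma=-\BWv\,(D_\nu\tilde\gamma|_f)$ (Lemma \ref{LMevolutionequations} and the chain rule),
\[
\rD{}{t}\gconc=\int_\Sigma\big(\gamma^4 H-4\gamma^3(D_\nu\tilde\gamma|_f)\big)\big(\Delta H+H|A^o|^2+H\Rcn(\nu,\nu)\big)\,d\mu .
\]
Integrating the $\Delta H$ contribution by parts produces the favourable term $-\int_\Sigma\gamma^4|\nabla H|^2\,d\mu$; every cross term so generated carries at least one factor of $\nabla\gamma$, $\nabla_{(2)}\gamma$ or $\nabla(D_\nu\tilde\gamma|_f)$, all controlled via \eqref{EQgamma}, and is absorbed into it by Young's inequality (integrating by parts once more where a bare $\nabla H$, rather than $|\nabla H|$, survives). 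Estimating $H^2\le 2|A|^2$ and $|\Rcn(\nu,\nu)|\le\SK_0$, and systematically splitting a factor $|A|$ off each remaining curvature term so as to pair it against $\int_{[\gamma>0]}|A|^2\,d\mu\le\varepsilon$ (using also the a priori bound $|\Sigma|_{[\gamma>0]}<\csupp$), the task reduces, for a fixed exponent $s$, to controlling $\int_\Sigma\big(|A|^4+|\nabla A|^2\big)\gamma^s\,d\mu$ modulo a remainder already of order $\varepsilon$ (up to $\csupp$-dependent constants). As in the proof of Proposition \ref{PRconcentestvanilla}, Theorem \ref{TMhs} with $u=|A|^2\gamma^{s/2}$ and $u=|\nabla A|\gamma^{s/2}$, together with the smallness of $\varepsilon$, gives
\[
\int_\Sigma|A|^4\gamma^s\,d\mu\le c\varepsilon\int_\Sigma|\nabla A|^2\gamma^s\,d\mu+c\varepsilon^2,
\]
and at each $t\in[0,T^*)$ the integral $\int_\Sigma|\nabla A|^2\gamma^s\,d\mu$ is bounded by Proposition \ref{PRbootstrappingfirststep} (and $\int_\Sigma|\nabla_{(2)}A|^2\gamma^s\,d\mu$ by Proposition \ref{PRbootstrappingsecondstep}, should the higher order terms require it), at the price of allowing $c$ to depend also on $\vn{\nabla_{(i)}A}_2^2\big|_{t=0}$, $i\le 2$, and to grow like $1+t$.

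Collecting these estimates yields a differential inequality $\rD{}{t}\gconc\le c_4(\sigma+\varepsilon)(1+t)^3$ with $c_4$ as in the statement (the precise powers of $1+t$ and of $\sigma+\varepsilon$ are immaterial, any such polynomial bound suffices). Integrating from $t=0$ and using $\gconc\big|_{t=0}=\sigma$ keeps $\gconc$ strictly below $\csupp$ on $[0,\min\{T,t_0\})$, where $t_0$ can be taken $\ge\sqrt[4]{\csupp/(c_4(\sigma+\varepsilon))}-1$ once $\hsigma_0,\hvarepsilon$ are chosen small enough (in terms of $c_4$ and $\csupp$) that this quantity is positive; by the definition of $T^*$ this gives $T^*\ge\min\{T,t_0\}$, and the final ``in particular'' follows from $|\Sigma|_{[u>0]}\le\gconc<\csupp$ whenever $[u>0]\subset[\gamma=1]$. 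The main obstacle is the circularity --- Theorem \ref{TMhs} licenses every estimate used to keep the concentration of area under control, yet applies only once that area is known to be small --- which the continuity argument breaks; the secondary, more laborious difficulty is ensuring, through the $|A|$-splitting together with the bootstrapped estimates of Propositions \ref{PRbootstrappingfirststep} and \ref{PRbootstrappingsecondstep}, that the entire right-hand side is of order $\sigma+\varepsilon$ rather than merely $O(1)$, for it is this that makes the lifespan $t_0$ large for small initial data.
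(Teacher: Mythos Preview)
Your continuity argument and the idea of deriving a differential inequality for $\gconc$ is exactly the paper's strategy. The paper likewise computes $\rD{}{t}\gconc$, integrates the $\Delta H$ contributions by parts, and closes a continuity argument on the maximal interval where the Hoffman--Spruck hypotheses persist.

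The one genuine discrepancy is the form of the differential inequality. You claim the decoupled estimate $\rD{}{t}\gconc\le c_4(\sigma+\varepsilon)(1+t)^3$, but this is not what one obtains: the term $-4\int(D_\nu\tilde\gamma)H\,\Rcn(\nu,\nu)\,\gamma^3\,d\mu$ is \emph{linear} in $|A|$ and after Cauchy--Schwarz contributes $c\SK_0\sqrt{\varepsilon}\sqrt{\gconc}$; likewise the inputs $\int_{[\gamma>0]}|\overline{\nabla_{(i)}\Rc}|^2\,d\mu$ feeding Propositions~\ref{PRbootstrappingfirststep}--\ref{PRbootstrappingsecondstep} scale with the local area. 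Neither your good term $-\int|\nabla H|^2\gamma^4$ nor any $|A|$-splitting removes this coupling to $\gconc$. The paper does not try to: its differential inequality is of the shape $\rD{}{t}\gconc\le c\sqrt{\hvarepsilon}\,(1+\sqrt{\gconc})+c\sqrt{t_0\hvarepsilon}\,\sqrt{\gconc}+ct_0\hvarepsilon$, and it then integrates, \emph{squares}, applies H\"older in time to turn $(\int\sqrt{\gconc})^2$ into $t\int\gconc$, and invokes a nonlinear Gronwall inequality (Dragomir) to reach $\gconc\le c_4(\hsigma_0+\hvarepsilon)(1+t_0)^4$. If instead you bound $\sqrt{\gconc}<\sqrt{\csupp}$ a priori, you get $\rD{}{t}\gconc\le c\sqrt{\varepsilon}\,(1+t)^{\,?}$---still enough for a large $t_0$, but not with the stated $(\hsigma_0+\hvarepsilon)$-dependence.

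A secondary point: the paper is explicit about a three-layer cutoff structure. Propositions~\ref{PRconcentestvanilla}--\ref{PRbootstrappingsecondstep} are applied with a \emph{smaller} cutoff $\eta$ satisfying $[\eta>0]\subset[\gamma=1]$ (so that $|\Sigma|_{[\eta>0]}\le\gconc<\csupp$ and Hoffman--Spruck is legitimate), while the unweighted term $\int_{[\gamma>0]}|\nabla A|^2\,d\mu$ is handled via a \emph{larger} cutoff $\varphi$ with $[\gamma>0]\subset[\varphi=1]$, covering $[\varphi>0]$ by $\rho$-balls to invoke~\eqref{EQconcentsmallsupass}. Your covering remark points in this direction but does not disentangle the two roles.
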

\begin{proof}
Using Lemma \ref{LMevolutionequations} we compute
\begin{align*}
\rD{}{t}\int_\Sigma \gamma^4 d\mu
 &= -4\int_\Sigma (D_\nu\tilde{\gamma})(\Delta H + H|A^o|^2 + H\Rcn(\nu,\nu))\,\gamma^3d\mu
\\&\quad
  + \int_\Sigma H(\Delta H + H|A^o|^2 + H\Rcn(\nu,\nu))\,\gamma^4d\mu
\\
 &= -4\int_\Sigma \sIP{\nabla D_\nu\tilde{\gamma}}{\nabla H}\,\gamma^3d\mu
   -12\int_\Sigma \sIP{D_\nu\tilde{\gamma}\nabla\gamma}{\nabla H}\,\gamma^2d\mu
\\&\quad
  -4\int_\Sigma (D_\nu\tilde{\gamma})(H|A^o|^2 + H\Rcn(\nu,\nu))\,\gamma^3d\mu
\\&\quad
  + \int_\Sigma H(\Delta H + H|A^o|^2 + H\Rcn(\nu,\nu))\,\gamma^4d\mu\,.
\intertext{Estimating, we have}
\rD{}{t}\gconc
 &\le
    c\int_\Sigma (1+|A|)|\nabla A|\,\gamma^2d\mu
  + c\int_\Sigma |A|^4 + |A|^3 + \SK_0|A|^2 + \SK_0|A|\,\gamma^3d\mu
\\&\quad
  + c\int_\Sigma |A|\,|\nabla_{(2)}A|\,\gamma^4d\mu\,.
\\
 &\le
    c\SK_0\sqrt{\gconc}\sqrt{\int_{[\gamma>0]}|A|^2d\mu}
\\&\quad
  + c(1+\SK_0)\int_{[\gamma>0]}|A|^2d\mu
  + c\int_\Sigma |A|^6\gamma^4 d\mu
\\&\quad
  + c\bigg(\int_{[\gamma>0]} |A|^2d\mu\bigg)^{\frac12}\bigg(\int_\Sigma |\nabla_{(2)}A|^2\,\gamma^4 d\mu\bigg)^{\frac12}
\\&\quad
  + c\bigg(\int_{[\gamma>0]} |A|^2d\mu\bigg)^{\frac12}\bigg(\int_\Sigma |\nabla A|^2\,\gamma^4 d\mu\bigg)^{\frac12}
\\&\quad
  + c\sqrt{\gconc}\int_{[\gamma>0]}|\nabla A|^2d\mu\,.
\end{align*}
Since $\gamma$ has compact support, we can cover its image under $f$ with finitely many balls of radius $\rho$, with $\rho$ as in \eqref{EQconcentsmallsupass}.
Therefore
\[
\int_{[\gamma>0]}|A|^2d\mu \le c\hat\varepsilon_0\,,
\]
where $c$ now and for the rest of the proof depends additionally on the metric of $N$.
Smoothness of the solution and the hypothesis \eqref{EQinitsmallconcareaass} implies that
\eqref{EQambientcurvass} and \eqref{EQambientinjass} are satisfied on a maximal time interval $[0,t_0)$, where
$t_0\le T$. Suppose that
\begin{equation}
t_0 < \sqrt{\frac{
 \csupp
}{2c_4(\hsigma_0+\hat\varepsilon_0)}}-1\,,
\label{EQt0}
\end{equation}
for a parameter $c_4$ to be later specified.
We will show that this leads to a contradiction.

Note that for $\hsigma_0$ and $\hat\varepsilon_0$ small the right hand side of \eqref{EQt0} is large.
The conditions \eqref{EQambientcurvass} and \eqref{EQambientinjass} for the Hoffman-Spruck Sobolev inequality are satisfied on $[0,t_0)$.
We may therefore apply Propositions \ref{PRconcentestvanilla}, \ref{PRbootstrappingfirststep} and \ref{PRbootstrappingsecondstep}.
Using these with a cutoff function $\eta$ in these propositions satisfying $[\eta>0] \subset [\gamma=1]$, we have
\[
\int_{[\eta>0]}d\mu \le \int_{[\gamma=1]}d\mu \le \gconc\,.
\]
Presuming $\hvarepsilon \le 1$, this yields
\begin{align}
\rD{}{t}\gconc
 &\le
    c\SK_0\sqrt{\hat\varepsilon_0}\sqrt{\gconc} + c(1+\SK_0)\sqrt{\hvarepsilon}
\notag\\*&\qquad
  + ct_0\hvarepsilon
  + c\sqrt{t_0}\sqrt{\hvarepsilon}(\SK_4 + \SK_5)\sqrt{\gconc}
\notag\\*&\qquad
  + c\sqrt{\gconc}\int_{[\gamma>0]}|\nabla A|^2d\mu\,,
\label{EQconcentareaEQ0}
\end{align}
for $t\in[0,t_0)$.
Now consider a cutoff function $\varphi$ satisfying the conditions of \eqref{EQgamma} and $[\gamma>0] \subset [\varphi = 1]$, so that
\begin{align*}
  \int_{[\gamma>0]}|\nabla A|^2d\mu
  &\le
  \int_{[\varphi=1]}|\nabla A|^2d\mu
   \le
     \int_\Sigma |\nabla A|^2 \varphi^s d\mu
\\&\le
     \int_\Sigma |\nabla_{(2)}A|\,|A| \varphi^s d\mu
   + c\int_\Sigma |\nabla A|\,|A| \varphi^{s-1} d\mu
\\&\le
     \bigg(\int_\Sigma |A|^2\varphi^sd\mu\bigg)^{\frac12}\bigg(\int_\Sigma |\nabla_{(2)}A|^2\,\varphi^s
d\mu\bigg)^{\frac12}
   + \frac12\int_\Sigma |\nabla A|^2\varphi^{s} d\mu
\\&\qquad
   + c\int_\Sigma |A|^2 \varphi^{s-2} d\mu\,.
\end{align*}
Covering $[\varphi > 0]$ with $B_\rho(x_i)$ for a finite set of $x_i\in N$ and using \eqref{EQconcentsmallsupass}, we have that $\vn{A}^2_{2,[\varphi>0]} \le
\veTwo$ and can thus apply Proposition \ref{PRbootstrappingsecondstep} (with $s\ge s_2$) to obtain
\begin{align}
  \int_{[\gamma>0]}|\nabla A|^2d\mu
  &\le
     c\bigg(\int_{[\varphi>0]} |A|^2d\mu\bigg)^{\frac12}\Big(1 + t_0\hvarepsilon + t_0\SK_5^2\gconc\Big)^\frac12
   + c\int_{[\varphi>0]} |A|^2 d\mu
\notag\\
  &\le
     c\sqrt{\hvarepsilon}\Big(1 + \sqrt{t_0\hvarepsilon} + \SK_5\sqrt{t_0}\sqrt{\gconc}\Big)
   + c\hvarepsilon
\,.
\label{EQconcentareaEQ1}
\end{align}
Combining \eqref{EQconcentareaEQ1} with \eqref{EQconcentareaEQ0} yields
\begin{align}
\rD{}{t}\gconc
 &\le
    c\SK_0\sqrt{\hat\varepsilon_0}\sqrt{\gconc} + c(1+\SK_0)\sqrt{\hvarepsilon}
\notag\\*&\qquad
  + ct_0\hvarepsilon
  + c\sqrt{t_0}\sqrt{\hvarepsilon}(\SK_4 + \SK_5)\sqrt{\gconc}
\,.
\label{EQconcentareaEQ2}
\end{align}
Integrating, squaring then using H\"older's inequality gives
\begin{align}
\Big(\gconc\Big)^2
 &\le
    \hsigma_0^2 + ct_0^2\hvarepsilon(1+\SK_0^2)
\notag\\&\qquad
  + c\hvarepsilon(t_0^{3}+t_0^4)(1 + \SK_0^2 + \SK_4^2 + \SK_5^2)\int_0^{t_0}\gconc\,dt\,.
\label{EQconcentareaEQ21}
\end{align}
Applying a version of Gronwall's inequality \cite[Theorem 5]{Dragomir} and supposing $\hsigma_0<1$ we find
\begin{equation}
\gconc
 \le c_4(\hsigma_0 + \hvarepsilon)(1+t_0)^4
\,,
\label{EQconcentareaEQ3}
\end{equation}
where $c_4$ is a finite, absolute constant, depending only on $c$ from \eqref{EQconcentareaEQ21} and $\SK_0$, $\SK_4$, $\SK_5$.
The maximality of $t_0$ implies that
\[
\gconc\Big|_{t=t_0} \ge \csupp\,.
\]
From \eqref{EQconcentareaEQ3} and \eqref{EQt0} we have
\[
\gconc
<
     c_4(\hsigma_0 + \hvarepsilon)\bigg(1+ \sqrt[4]{\frac{ \csupp}{c_4(\hsigma_0+\hat\varepsilon_0)}}-1\bigg)^4
= \csupp
\,,
\]
contradicting the maximality of $t_0$. Therefore
\[
t_0 \ge \sqrt[4]{\frac{{\csupp}}{c_4(\hsigma_0+\hvarepsilon)}}-1\,,
\]
as required.
To see that this proves the applicability of Theorem \ref{TMhs} on $[0,t_0)$, let $u:\Sigma\rightarrow\R$ be any
function with compact support satisfying in addition
\[
\gamma(p) = 1\quad\text{if}\quad u(p) > 0\,.
\]
Clearly then
\[
|\Sigma|_{[u>0]} \le |\Sigma|_{[\gamma=1]}
                 \le |\Sigma|_{\gamma^4} \le \csupp\,,
\]
so that both \eqref{EQambientcurvass} and \eqref{EQambientinjass} are satisfied.
\end{proof}

\begin{rmk}
The above estimate will also be applicable to the case where $f$ is an entire solution to \eqref{EQwf}. This is because we have kept all estimates local and not
used any bound on $|\Sigma|$. If $|\Sigma|$ is initialy bounded, a global estimate is easy to obtain via a simplified version of the above argument.
\end{rmk}

\begin{rmk}
For submanifolds of a Riemannian space, it makes sense to consider rescalings which scale both the ambient
space and the submanifold simultaneously by the same factor.
The concentration of area is not scale invariant: under a scaling with magnitude $\rho$, it scales like
$\rho^2$.  The Ricci curvature scales like $\rho^{-2}$, and so the right hand side of
\eqref{EQinitsmallconcareaass} also like $\rho^2$.
This implies that despite the concentration of area not being scale invariant, the condition
\eqref{EQinitsmallconcareaass} is.
\end{rmk}

By combining Propositions \ref{PRconcentestvanilla} and \ref{PRconcentarea} we conclude the following
concentration of curvature estimate for solutions in a space with some positive curvature.

\begin{cor}[Control on the concentration of curvature]
\label{CYconcentestvanillapositivecurvature}
Under the assumptions of Proposition \ref{PRconcentarea}, for any $t\in[0,T^*)$ we have
\begin{equation*}
  \begin{split}
  &\int_{[\gamma=1]} |A|^2 d\mu
 + \int_0^t\int_{[\gamma=1]} \big(|\nabla_{(2)}A|^2 + |\nabla A|^2|A|^2 + |A|^6\big)\,d\mu\,d\tau \\
&\qquad\qquad\qquad
 \le \int_{[\gamma > 0]} |A|^2 d\mu\Big|_{t=0}
 + c\hvarepsilon t,
  \end{split}
\end{equation*}
where $c$ is a constant depending only on the metric of $N$, $\cg$, $\SK_i$ for $i\in\{0,1,2,3,4,5\}$, and $\vn{\nabla_{(i)} A}^2_{2}\big|_{t=0}$ for $i\in\{0,1,2\}$.
\end{cor}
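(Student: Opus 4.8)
The plan is to obtain this corollary by assembling the two preceding propositions: Proposition \ref{PRconcentarea} supplies the geometric prerequisite, and Proposition \ref{PRconcentestvanilla} then supplies the estimate. Applied to the restricted flow $f:\Sigma\times[0,T^*)\rightarrow N$, Proposition \ref{PRconcentestvanilla} yields \eqref{EQconcentfirstest}, whose left-hand side is exactly that of our claim and whose right-hand side differs only in the error term; the one obstacle to invoking it is its hypothesis that the Hoffman--Spruck conditions \eqref{EQambientcurvass}, \eqref{EQambientinjass} of Theorem \ref{TMhs} hold along the flow with $u=\gamma$, and making that hypothesis legitimate for solutions in spaces with some positive curvature is precisely what Proposition \ref{PRconcentarea} secures on $[0,T^*)$.

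Concretely, the first step is to invoke Proposition \ref{PRconcentarea}, whose hypotheses are the standing assumptions \eqref{EQconcentsmallsupass} and \eqref{EQinitsmallconcareaass}: its conclusion \eqref{EQsupportestimate}, together with the closing paragraph of its proof, says that for every $t\in[0,T^*)$ the conditions \eqref{EQambientcurvass} and \eqref{EQambientinjass} hold for any test function compactly supported in $[\gamma=1]$. After passing, if necessary, to an intermediate cutoff $\gamma_1$ as in \eqref{EQgamma} with $[\gamma>0]\subset[\gamma_1=1]$ and $[\gamma_1>0]\subset f^{-1}(B_\rho(x))$ --- i.e.\ keeping the support of $\gamma$ strictly inside the $\rho$-ball --- we then have $|\Sigma|_{[\gamma>0]}\le|\Sigma|_{[\gamma_1=1]}\le\int_\Sigma\gamma_1^4\,d\mu<\csupp$ throughout $[0,T^*)$, so that Theorem \ref{TMhs}, Lemma \ref{LMmultsob} and Proposition \ref{PRconcentestvanilla} are all available with $u=\gamma$ on $[0,T^*)$. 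Since $[\gamma>0]\subset f^{-1}(B_\rho(x))$, assumption \eqref{EQconcentsmallsupass} also gives $\varepsilon=\sup_{[0,T^*)}\int_{[\gamma>0]}|A|^2\,d\mu\le\hat\varepsilon_0\le\veZero$, so Proposition \ref{PRconcentestvanilla} applies and produces \eqref{EQconcentfirstest} with this $\gamma$.

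The second step is to collapse the right-hand side of \eqref{EQconcentfirstest}. Its error term is $c_1t\,\sup_{[0,T^*)}\int_{[\gamma>0]}\big(|A|^2+|\overline{\nabla_{(3)}\Rc}|^2\big)\,d\mu$; the curvature piece is $\le\hat\varepsilon_0$ by \eqref{EQconcentsmallsupass}, and the Ricci piece is $\le\SK_3^2\,|\Sigma|_{[\gamma>0]}<\SK_3^2\,\csupp$, a quantity depending only on the metric of $N$ (through $\SK_3$, $\SK$ and $\rho_N$). Because $\hat\varepsilon_0$ is a fixed positive constant depending only on the metric of $N$ and $\cg$, the sum of these two contributions is $\le c\hat\varepsilon_0$ once $c$ is enlarged to absorb $\SK_3^2\csupp/\hat\varepsilon_0$, so the error term is $\le c\hat\varepsilon_0 t$ with $c$ depending only on $c_1$, $\SK_i$ for $i\le5$, $\csupp$ and $\vn{\nabla_{(i)}A}^2_2\big|_{t=0}$ for $i\in\{0,1,2\}$ --- exactly the dependencies allowed in the statement. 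Substituting back gives the claimed inequality.

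I do not expect a genuine obstacle here: this corollary carries no new analysis, being a short recombination of Propositions \ref{PRconcentarea} and \ref{PRconcentestvanilla}. All of the difficulty has been discharged earlier, above all in Proposition \ref{PRconcentarea}, which in turn rests on the crucial fact --- built into Theorem \ref{TMhs} --- that the Sobolev constant is independent of the geometry of $f(\Sigma)$. The only points demanding care are bookkeeping ones: choosing the nested cutoffs $\gamma\subset\gamma_1$ and the radius $\rho$ compatibly, so that Proposition \ref{PRconcentarea} applies to $\gamma_1$ while its output controls $\gamma$, and checking that each constant introduced when simplifying the right-hand side lies among the admissible dependencies.
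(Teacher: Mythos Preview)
Your proposal is correct and follows the same approach the paper indicates: the paper gives no detailed proof but simply states that the corollary follows ``by combining Propositions \ref{PRconcentestvanilla} and \ref{PRconcentarea}'', which is precisely what you carry out. Your use of an intermediate cutoff $\gamma_1$ to bridge the gap between the conclusion of Proposition \ref{PRconcentarea} (which controls only $|\Sigma|_{[\gamma=1]}$) and the hypothesis of Proposition \ref{PRconcentestvanilla} (which requires control of $|\Sigma|_{[\gamma>0]}$) is a careful detail the paper leaves implicit.
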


\begin{rmk}
If $T<\infty$, then we may always take $\rho$ small enough in \eqref{EQconcentsmallsupass} to force $T^* = T$. We may therefore assume, up to imposing $\rho$
small enough, that Corollary \ref{CYconcentestvanillapositivecurvature} above and Corollary \ref{CYpointwisebound2} below hold on the entire time interval
$[0,T)$, so long as $T<\infty$.
If $T=\infty$ then for any compact subinterval $I$ there is a $\rho_0 > 0$ small enough such that the pointwise estimates hold on $I$.
In the case of a sequence of compact subintervals $I_j$ where $\sup \{t\in I_j\} \rightarrow \infty$, $\rho_0 \rightarrow 0$ and the estimates do not hold in
the limit.
\end{rmk}

The Hoffman-Spruck inequality is the only critical ingredient needed for Propositions \ref{PRbootstrappingfirststep} and \ref{PRbootstrappingsecondstep}. We
therefore obtain a pointwise curvature estimate also in this case.

\begin{cor}
\label{CYpointwisebound2}
Under the assumptions of Proposition \ref{PRconcentarea}, for any $t\in[0,T^*)$ we have
\begin{equation*}
\vn{A}_{\infty,[\gamma=1]} \le c\hvarepsilon\,,
\end{equation*}
where $c$ is a constant depending only on the metric of $N$, $\cg$, $\SK_i$ for $i\in\{0,1,2,3,4,5\}$, and $\vn{\nabla_{(i)} A}^2_{2}\big|_{t=0}$ for $i\in\{0,1,2\}$.
\end{cor}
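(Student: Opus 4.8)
The plan is to run the argument of Corollary~\ref{CYpointwisebound1} almost verbatim, with the Cartan--Hadamard input used there (which made the Hoffman--Spruck inequality of Theorem~\ref{TMhs} available for every test function) replaced by the conclusion of Proposition~\ref{PRconcentarea}. Fix $x\in N$ and let $\gamma$ be the cutoff furnished by Proposition~\ref{PRconcentarea}, so that $[\gamma>0]\subset f^{-1}(B_\rho(x))$ and the hypotheses \eqref{EQconcentsmallsupass}, \eqref{EQinitsmallconcareaass} hold; by that proposition the conditions \eqref{EQambientcurvass} and \eqref{EQambientinjass} of Theorem~\ref{TMhs} are satisfied on $[0,T^*)$ for every $u\in C_c^1(\Sigma)$ with $[u>0]\subset[\gamma=1]$, and in addition $\int_\Sigma\gamma^4\,d\mu<\csupp$ there by estimate~\eqref{EQsupportestimate}.

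First I would introduce nested cutoffs $\eta_1$ and $\eta_2$ as in \eqref{EQgamma}, with cutoff constants comparable to $\cg$, chosen so that $[\eta_2>0]\subset[\gamma=1]$, $[\eta_1>0]\subset[\eta_2=1]$, and $[\eta_1=1]\supset f^{-1}(B_{\rho/8}(x))$, the inner region being the piece of $[\gamma=1]$ we wish to estimate. Covering $[\eta_2>0]$ by finitely many $\rho$-balls and using \eqref{EQconcentsmallsupass} gives $\vn{A}^2_{2,[\eta_2>0]}\le\hvarepsilon\le\min\{\veZero,\veOne,\veTwo\}$, so the smallness hypotheses of Propositions~\ref{PRconcentestvanilla}, \ref{PRbootstrappingfirststep}, \ref{PRbootstrappingsecondstep} are met with $\eta_2$ in the role of $\gamma$. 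Every appeal to Theorem~\ref{TMhs} inside the proofs of those propositions is for a test function supported in $[\eta_2>0]\subset[\gamma=1]$, so Proposition~\ref{PRconcentarea} licenses each of them; moreover $|\Sigma|_{[\eta_2>0]}\le|\Sigma|_{[\gamma=1]}\le\int_\Sigma\gamma^4\,d\mu<\csupp$, whence $\int_{[\eta_2>0]}|\overline{\nabla_{(i)}\Rc}|^2\,d\mu\le\SK_i^2\csupp$ for $i\le5$. Proposition~\ref{PRbootstrappingsecondstep} (with $s\ge s_2$) then gives, for all $t\in[0,T^*)$,
\[
\int_\Sigma|\nabla_{(2)}A|^2\eta_2^{s_2}\,d\mu \le c_3 + c_3\,T^*\Big(\hvarepsilon+\SK_5^2\csupp\Big) \le c\,,
\]
where $c$ depends only on the metric of $N$, $\cg$, the $\SK_i$ for $i\le5$, $T^*$, and $\vn{\nabla_{(i)}A}^2_{2}\big|_{t=0}$ for $i\le2$; by the remark following Corollary~\ref{CYconcentestvanillapositivecurvature}, $T^*$ may be taken finite whenever $T<\infty$, and otherwise one argues on compact subintervals.

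With this $W^{2,2}$-control in hand I would invoke the interior $L^\infty$ estimate Lemma~4.3 of \cite{kuwert2002gfw}, which is valid whenever the Sobolev inequality of Theorem~\ref{TMhs} holds for the test functions it involves --- again guaranteed on $[0,T^*)$ by Proposition~\ref{PRconcentarea} --- exactly as in the proof of Corollary~\ref{CYpointwisebound1}. This yields, for every $t\in[0,T^*)$,
\[
\vn{A}_{\infty,[\eta_1=1]} \le c\,\varepsilon\big(\vn{\nabla_{(2)}A}^2_{2,[\eta_2>0]}+\varepsilon\big) \le c\,\hvarepsilon\,,
\]
using $\varepsilon=\vn{A}^2_{2,[\eta_2>0]}\le\hvarepsilon$ together with the bound of the previous step. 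Since $x$ was arbitrary and the balls $B_{\rho/8}(x)$ cover $f(\Sigma)$, the supremum of $|A|$ over $[\gamma=1]$ is bounded by $c\hvarepsilon$, which is the assertion.

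The one genuine point requiring care --- and the main obstacle --- is the bookkeeping of supports: every application of Theorem~\ref{TMhs} in the chain of cited propositions and in the $L^\infty$ estimate must be for a test function supported inside $[\gamma=1]$, because that is the region on which Proposition~\ref{PRconcentarea} certifies \eqref{EQambientcurvass}--\eqref{EQambientinjass}. This is what forces the nested cutoffs $\eta_1,\eta_2$ and, within the bootstrap propositions, a further auxiliary cutoff whose support still lies in $[\gamma=1]$. The Sobolev-loss exponent $s$ must simultaneously be large enough for all of Propositions~\ref{PRconcentestvanilla}--\ref{PRbootstrappingsecondstep} and Lemma~4.3 of \cite{kuwert2002gfw}; as there are only finitely many constraints this fixes $s$ as an absolute constant, just as in the proof of Proposition~\ref{PRbootstrappingfirststep}.
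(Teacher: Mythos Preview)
Your proposal is correct and follows the same route the paper indicates: the paper's own argument for Corollary~\ref{CYpointwisebound2} is the single sentence preceding its statement, namely that Proposition~\ref{PRconcentarea} supplies the Hoffman--Spruck inequality on $[0,T^*)$, whereupon Propositions~\ref{PRbootstrappingfirststep}--\ref{PRbootstrappingsecondstep} and the $L^\infty$ estimate of \cite[Lemma~4.3]{kuwert2002gfw} go through exactly as in Corollary~\ref{CYpointwisebound1}. Your write-up is in fact more careful than the paper on the one genuine subtlety --- that Proposition~\ref{PRconcentarea} only certifies \eqref{EQambientcurvass}--\eqref{EQambientinjass} for test functions supported in $[\gamma=1]$, forcing the nested cutoffs $\eta_1,\eta_2$ --- which the paper leaves implicit.
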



\section{Proof of the main theorems}

Let us assume first the hypotheses of Theorem \ref{TMlifespanvanilla}.  We shall describe the modifications
required to prove Theorem \ref{TMlifespanvanillapositivecurvature} after we finish the proof of Theorem
\ref{TMlifespanvanilla}.

We make the definition
\begin{equation}
\label{e:epsfuncdef}
\eta(t) = \sup_{x\in N}\int_{f^{-1}(B_\rho(x))} |A|^2d\mu.
\end{equation}
By covering $B_\rho$ with several parallel transported copies of $B_{\sfrac{\rho}{2}}$ there is a constant
$c_{\eta}$ depending only on the metric of $N$ such that
\begin{equation}
\label{e:epscovered}
\eta(t) \le c_{\eta}\sup_{x\in N}\int_{f^{-1}(B_{\sfrac{\rho}{2}}(x))} |A|^2d\mu.
\end{equation}
By Theorem \ref{TMstevanilla} we have that $f(M\times[0,t])$ is compact for $t<T$ and so the function
$\eta:[0,T)\rightarrow\R$ is continuous.  We now define
\begin{equation}
\label{e:struweparameter}
t_0
=
  \sup\{0\le t\le\min(T,\lambda) : \eta(\tau)\le 3c_{\eta}\veThree
                                         \ \text{ for }\ 0\le\tau\le t\}\,,
\end{equation}
where $\lambda$ is a parameter to be specified later. We assume that $\veThree \le \min\{\veZero,\veOne,\veTwo\}$.

The proof continues in three steps. First, we show that it must be the case that $t_0 = \min(T,\lambda)$ for a finite parameter $\lambda$ to be chosen.
Second, we show that if $t_0 = \lambda$, then we can conclude the lifespan theorem. Finally, we prove by contradiction that if $T \ne \infty$, then $t_0 \ne
T$. Note that of course if $T$ were infinite, then $\lambda < T$ and by step 2 we conclude the lifespan theorem. We label these steps as
\begin{align}
\label{e:7}
t_0 &= \min(T,\lambda),\\
\label{e:8}
t_0 &= \lambda \quad\Longrightarrow\quad\text{lifespan theorem},\\
\label{e:9}
T &\ne \infty\hskip+2.2mm \Longrightarrow\quad t_0 \ne T.
\end{align}
We now give the proof of \eqref{e:7}.
From the hypothesis of Theorem \ref{TMlifespanvanilla},
\[
\eta(0)\le \veThree \le \veTwo
 <
3c_{\eta}\veTwo\,,
\]
and therefore by the definition \eqref{e:struweparameter} and smoothness of the flow we have $t_0 > 0$.
Assume for the sake of contradiction that $t_0 < \min(T,\lambda)$.
Then \eqref{e:struweparameter} and the continuity of $\eta$ gives
\begin{equation}
\label{e:t0ltmin}
\eta(t_0) =
3c_{\eta}\veThree\,.
\end{equation}
Now set $\gamma$ to be a cutoff function as in \eqref{EQgamma} such that
\[
\chi_{B_{\sfrac{\rho}{2}}(x)} \le \tilde{\gamma} \le \chi_{B_\rho(x)},
\]
for some $x\in N$.
By varying $\rho$ in \eqref{EQsmallconcentrationcondition} we may choose $\veThree$ small enough that
$3c_\eta\veThree = \veTwo$.
Definition \eqref{e:struweparameter} then implies that the smallness condition \eqref{EQconcentass} is
satisfied on $[0,t_0)$.
Therefore we may apply Proposition \ref{PRconcentestvanilla} to obtain
\begin{equation}
\label{e:useprop44}
  \begin{split}
  \int_{f^{-1}(B_{\sfrac{\rho}{2}}(x))} |A|^2 d\mu
         &\le \int_{f^{-1}(B_\rho(x))} |A|^2d\mu\bigg|_{t=0} + c_1\veTwo t
      \\ &\le \veThree + 3c_1c_{\eta}\veThree t_0
      \\ &\le \veThree + 3c_1c_\eta\veThree \lambda
      \\ &\le 2\veThree,\qquad \text{for}\quad\lambda = \frac{1}{3c_1c_\eta}\,,
  \end{split}
\end{equation}
for all $t \in [0,t_0)$.
We combine this with \eqref{e:epscovered} to conclude
\begin{equation}
\label{e:prop44pluscovering}
\eta(t)
 \le \sup_{x\in N} \int_{f^{-1}(B_{\sfrac{\rho}{2}}(x))} |A|^2 d\mu
 \le
2c_{\eta}\veThree\,,
\end{equation}
where $0\le t < t_0$.
Since $\eta$ is continuous, we can let $t\rightarrow t_0$.
This is in contradiction with \eqref{e:t0ltmin}.  Therefore, with the choice of $\lambda$ in equation
\eqref{e:useprop44}, the assumption that $t_0 < \min(T,\lambda)$ is incorrect.
We have thus proven \eqref{e:7}.

This argument also establishes \eqref{e:8}.
Making explicit the dependence of $c_1$ on $\rho$ we have
\[
\lambda = \frac{1}{3\tilde{c}_1c_\eta}\rho^4
\]
where $\tilde{c}_1$ is now a constant depending only on $\SK_i$ for $i=0,1,2$.
Now if $t_0 = \lambda$ then
\[
  T\ge\lambda = \frac{1}{3\tilde{c_1}c_\eta^2}\rho^4\,
\]
which is \eqref{EQmaximaltimeestimatevanilla}.
Also, \eqref{e:prop44pluscovering} implies \eqref{EQconcentrationestimatevanilla}.  That is,
we have proved if $t_0 = \lambda$, then the lifespan theorem holds, which is the second step
\eqref{e:8}.

It only remains to prove equation \eqref{e:9}.
We assume
\[
t_0=T\ne\infty\,,
\]
and will derive a contradiction with the finite maximality of $T$.
The recent existence result due to Lamm and Koch \cite{kochlamm} applied for the Willmore flow of $C^0$ graphs in $\R^n$ with small Lipschitz constant, and
gives global existence. It is a straightforward adaptation of their argument to obtain local existence for $C^{0,1}$ graphs with any Lipschitz constant.
By considering graphs over the initial datum instead of graphs over $\R^2$, this additionally holds for the setting we consider here: closed immersed
submanifolds of a smooth Riemannian space.
The maximality of $T$ in light of this local existence theorem implies that as $t\rightarrow T$ either the Lipschitz constant of $f$ is becoming unbounded, or
the surface is no longer a surface.

We now claim that any sequence of immersions $f(\cdot,t_j)$ where $t_j \rightarrow T$ has a convergent subsequence in the $C^1$ topology, with limit a $C^1$
immersed surface. This is an immediate consequence of the pointwise bound for the second fundamental form from Corollary \ref{CYpointwisebound1} combined with
an application of the main compactness theorem in \cite{CooperCompactness}.
This implies that the Lipschitz constant of $f$ is not blowing up and that the geometry of the surface is not devolving; i.e., the evolving metrics are
uniformly bounded.
We thus have the desired contradiction to the finite maximality of $T$, establishing \eqref{e:9} and proving the theorem.

\qed

The modifications required to prove Theorem \ref{TMlifespanvanillapositivecurvature} are the following.  As the ambient space is no longer simply-connected and
non-positively curved, the assumptions \eqref{EQambientcurvass}, \eqref{EQambientinjass} are no longer automatically satisfied along the flow.  We have to use
Proposition \ref{PRconcentarea} to first control the concentration of the area, and then use this to allow us to apply Theorem \ref{TMhs} and conclude our
estimates.

Observe that the constant $\lambda = \frac{1}{3\tilde{c}_1c_\eta}$ is universal, depending only on the metric of $N$ and $\SK_i$ for $i=0,1,2$.
Additionally observe that the argument for step 3 above need only be performed in the case where $T < \lambda$.
We may without loss of generality assume that $\rho < \rho_0$ (an allowable choice for $\rho_0$ would be the extrinsic diameter of $\Sigma$) and then force
$\hsigma_0$ and $\hvarepsilon$ small enough (by taking $\rho$ smaller in the hypotheses of Theorem \ref{TMlifespanvanillapositivecurvature}) so that Proposition
\ref{PRconcentarea} implies the conditions \eqref{EQambientcurvass}, \eqref{EQambientinjass} are satisfied on $[0,\lambda\rho_0^4)$, which is a big enough
interval to perform the entire argument above.
In place of Proposition \ref{PRconcentestvanilla} we use Corollary \ref{CYconcentestvanillapositivecurvature}, and Corollary \ref{CYpointwisebound2} holds on
the interval $[0,\lambda\rho_0^4) \supset [0,\lambda\rho)$, which is all that is needed.
The argument goes through exactly as above.
\qed



\section*{Acknowledgements}

During the completion of this work the second author was supported by an Alexander-von-Humboldt fellowship at the
Otto-von-Guericke Universit\"at Magdeburg.
The first and third author were supported by grant ME 3816/1-1 by the Deutsche Forschungsgemeinschaft at Potsdam
Universit\"at.
Part of this work was additionally completed while the second and third authors were supported by the ARC Discovery
Project grant DP120100097 at the University of Wollongong.

\bibliographystyle{plain}
\bibliography{mbib}

\begin{thebibliography}{10}

\bibitem{bakerthesis}
C.~Baker.
\newblock {\em {The mean curvature flow of submanifolds of high codimension}}.
\newblock PhD thesis, Australian National University, Canberra, 2010.

\bibitem{CooperCompactness}
A.~Cooper.
\newblock A compactness theorem for the second fundamental form.
\newblock {\em arXiv preprint arXiv:1006.5697}, 2010.

\bibitem{Dragomir}
Sever~Silvestru Dragomir.
\newblock {\em Some Gronwall type inequalities and applications}.
\newblock Nova Science Pub Inc, 2003.

\bibitem{eidelman1998pbv}
S.D. Eidelman and NV~Zhitarashu.
\newblock {\em {Parabolic Boundary Value Problems}}.
\newblock Birkh{\"a}user, 1998.

\bibitem{RH}
R.~S. Hamilton.
\newblock Three-manifolds with positive {R}icci curvature.
\newblock {\em J. Differential Geom.}, 17(2):255--306, 1982.

\bibitem{hoffmanspruck}
D.~Hoffman and J.~Spruck.
\newblock Sobolev and isoperimetric inequalities for riemannian submanifolds.
\newblock {\em Comm. Pure Appl. Math.}, 27(6):715--727, 1974.

\bibitem{huisken1984fmc}
G.~Huisken.
\newblock {Flow by mean curvature of convex surfaces into spheres}.
\newblock {\em J. Differential Geom.}, 20(1):237--266, 1984.

\bibitem{huisken86riemannian}
G.~Huisken.
\newblock Contracting convex hypersurfaces in riemannian manifolds by their
  mean curvature.
\newblock {\em Inventiones Mathematicae}, 84(3):463--480, 1986.

\bibitem{kochlamm}
H.~Koch and T.~Lamm.
\newblock Geometric flows with rough initial data.
\newblock {\em Asian J. Math.}, 16(2):209--235, 2012.

\bibitem{kuwert2001wfs}
E.~Kuwert and R.~Schatzle.
\newblock {The Willmore flow with small initial energy}.
\newblock {\em J. Differential Geom.}, 57(3):409--441, 2001.

\bibitem{kuwert2002gfw}
E.~Kuwert and R.~Schatzle.
\newblock {Gradient flow for the Willmore functional}.
\newblock {\em Comm. Anal. Geom.}, 10(2):307--339, 2002.

\bibitem{shuanhuli}
S.~Li.
\newblock {\em {Quasilinear evolution problems}}.
\newblock PhD thesis, T.U. Delft, 1992.

\bibitem{MWW10}
J.~McCoy, G.~Wheeler, and G.~Williams.
\newblock {Lifespan theorem for constrained surface diffusion flows}.
\newblock {\em Math. Z.}, 269:147--178, 2011.

\bibitem{michael1973sam}
J.H. Michael and L.M. Simon.
\newblock {Sobolev and mean-value inequalities on generalized submanifolds of
  $R^n$}.
\newblock {\em Comm. Pure Appl. Math}, 26(36):1--379, 1973.

\bibitem{solonnikov1965bvp}
VA~Solonnikov.
\newblock {On the boundary value problems for linear parabolic systems of
  differential equations of general form}.
\newblock In {\em Proc. Steklov Inst. Math}, volume~83, pages 1--162, 1965.

\bibitem{W11}
G.~Wheeler.
\newblock Lifespan theorem for simple constrained surface diffusion flows.
\newblock {\em J. Math. Anal. Appl.}, 375(2):685--698, 2011.

\end{thebibliography}

\end{document}